\newtheorem{introthm}{Theorem}
\newtheorem{introcor}[introthm]{Corollary}
\newtheorem{thm}{Theorem}[section]
\newtheorem{lem}[thm]{Lemma}
\newtheorem{prop}[thm]{Proposition}
\newtheorem{cor}[thm]{Corollary}
\theoremstyle{definition}
\newtheorem{defn}[thm]{Definition}
\newtheorem{notn}[thm]{Notation}
\theoremstyle{remark}
\newtheorem{rem}[thm]{Remark}
\numberwithin{equation}{section}
\newcommand{\bC}{{\mathbb C}}
\newcommand{\bN}{{\mathbb N}}
\newcommand{\bR}{{\mathbb R}}
\newcommand{\bZ}{{\mathbb Z}}
\newcommand{\cA}{{\mathcal A}}
\newcommand{\cB}{{\mathcal B}}
\newcommand{\cC}{{\mathcal C}}
\newcommand{\cD}{{\mathcal D}}
\newcommand{\cL}{{\mathcal L}}
\newcommand{\cM}{{\mathcal M}}
\newcommand{\cN}{{\mathcal N}}
\newcommand{\cO}{{\mathcal O}}
\newcommand{\cP}{{\mathcal P}}
\newcommand{\cQ}{{\mathcal Q}}
\newcommand{\cR}{{\mathcal R}}
\newcommand{\cS}{{\mathcal S}}
\newcommand{\cU}{{\mathcal U}}
\newcommand{\cV}{{\mathcal V}}
\newcommand{\cW}{{\mathcal W}}
\DeclareMathOperator{\re}{Re}
\DeclareMathOperator{\tr}{tr}
\DeclareMathOperator{\Tr}{Tr}
\DeclareMathOperator{\ad}{ad}
\DeclareMathOperator{\orb}{orb}
\DeclareMathOperator{\diag}{diag}
\DeclareMathOperator{\Ent}{Ent}
\DeclareMathOperator{\tp}{tp}
\DeclareMathOperator{\Lip}{Lip}
\DeclareMathOperator{\Haar}{Haar}
\newcommand{\ip}[1]{\langle #1 \rangle}
\DeclarePairedDelimiter{\norm}{\lVert}{\rVert}
\begin{document}

	\title[Upgraded free independence phenomena for random unitaries]{Upgraded free independence phenomena \\ for random unitaries}

	\author{David Jekel}
	\address{\parbox{\linewidth}{Department of
			Mathematics and Statistics,
			York University \\
			4700 Keele Street, Toronto, ON M3J1P3, Canada}}
	
	\address[Current address]{\parbox{\linewidth}{Department of
			Mathematical Sciences,
			University of Copenhagen \\
			Universitetsparken 5, 2100 Copenhagen \O, Denmark}}
	\email{daj@math.ku.dk}
	\urladdr{http://davidjekel.com}

	\author{Srivatsav Kunnawalkam Elayavalli}
	\address{\parbox{\linewidth}{Department of Mathematics, University of California, San Diego, \\
			9500 Gilman Drive \# 0112, La Jolla, CA 92093, USA}}
	\email{srivatsav.kunnawalkam.elayavalli@vanderbilt.edu}
	\urladdr{https://sites.google.com/view/srivatsavke}
	
	\address[Current address]{\parbox{\linewidth}{Department of Mathematics, University of Maryland, College Park \\ William E. Kirwan Hall, 4176 Campus Dr, College Park, MD 20742, USA}} 
	\email{sriva@umd.edu}
	
	\subjclass{46L54, 46L10, 60B20, 94A17}
	
	\begin{abstract}
		We study upgraded free independence phenomena for unitary elements $u_1$, $u_2$, \dots representing the large-$n$ limit of Haar random unitaries, showing that free independence extends to several larger algebras containing $u_j$ in the ultraproduct of matrices $\prod_{n \to \cU} M_n(\bC)$. Using a uniform asymptotic freeness argument and volumetric analysis, we  prove free independence of the Pinsker algebras $\cP_j$ containing $u_j$. The Pinsker algebra $\cP_j$ is the maximal subalgebra containing $u_j$ with vanishing $1$-bounded entropy \cite{Hayes2018}; $\cP_j$ in particular contains the relative commutant $\{u_j\}' \cap \prod_{n \to \cU} M_n(\bC)$, more generally any unitary that can be connected to $u_j$ by a sequence of commuting pairs of Haar unitaries, and any unitary $v$ such that $v\cP_j v^* \cap \cP_j$ is diffuse.  Through an embedding argument, we go back and deduce analogous free independence results for $\cM^{\cU}$ when $\cM$ is a free product of Connes embeddable tracial von Neumann algebras $\cM_i$, which thus yields (in the Connes-embeddable case) a generalization and a new proof of Houdayer--Ioana's results on free independence of approximate commutants \cite{houdayer2023asymptotic}.  It also yields a new proof of the general absorption results for Connes-embeddable free products obtained by the first author, Hayes, Nelson, and Sinclair \cite{freePinsker}.
	\end{abstract}
	
	\maketitle%

	\section{Introduction}
	
	\subsection{Main results}
	
	A fundamental result in random matrix theory is that for independent $n \times n$ Haar random unitaries $U_1^{(n)}$, $U_2^{(n)}$, \dots, the trace of any word in the $U_j^{(n)}$'s and their adjoints converges to trace of the corresponding word in the free group \cite{VoicAsyFree,Voiculescu1998}.  Thus, the von Neumann algebra of the free group $L(F_\infty)$ arises from the large $n$ limit of random matrices.  Recently, Houdayer and Ioana \cite{houdayer2023asymptotic} discovered that free independence of algebras $\cM_i$ in a free product $\cM = *_{i \in I} \cM_i$ can be upgraded to free independence of their commutants in the ultrapower of $\cM$.  Thus, for instance, if $b_1$, $b_2$, \dots are elements in $L(F_\infty)$ such that $b_j$ approximately commutes with the group generator $g_j$, then $b_1$, $b_2$, \dots must be approximately freely independent.  Thus, it is natural to ask whether an analogous statement holds for Haar random unitary matrices.  We will give an affirmative answer to this question, so that matrix ultraproducts exhibit similar behavior to free products in this regard, and in addition prove several generalizations.
	
	\begin{introthm}[Asymptotic freeness of approximate commutants] \label{thm: random matrix commutant}
		Let $U_1^{(n)}$, $U_2^{(n)}$, \dots be independent $n \times n$ Haar random unitary matrices.  Let $B_1^{(n)}$, \dots, $B_m^{(n)}$ be random matrices on the same probability space such that $\norm{B_j^{(n)}} \leq 1$ and $\lim_{n \to \infty} \norm{[U_j^{(n)},B_j^{(n)}]}_2 = 0$ almost surely for $j = 1, \dots, k$.  Then $B_1^{(n)}$, \dots, $B_k^{(n)}$ are almost surely asymptotically freely independent.
	\end{introthm}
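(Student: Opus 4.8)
The plan is to derive Theorem~\ref{thm: random matrix commutant} from the free independence of the Pinsker algebras $\cP_j$ of the limiting Haar unitaries inside the matrix ultraproduct --- the main result to be established in the body of the paper --- so that essentially all of the work lies there and Theorem~\ref{thm: random matrix commutant} becomes a soft consequence. I expect the genuine obstacle to lie entirely in that Pinsker-freeness statement: proving it requires, via a uniform (in $n$) asymptotic freeness argument together with a volumetric/covering-number analysis, showing that each $\cP_j$ has $1$-bounded entropy zero in the presence of the remaining $u_i$, which is what forces the free independence. Granting that result, here is the deduction.

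First I would fix a free ultrafilter $\cU$ on $\bN$ and write $\cN = \prod_{n \to \cU} M_n(\bC)$ with its trace $\tau$. I would isolate a full-probability event $\Omega_0$ --- which can be taken \emph{independent of $\cU$}, since all the relevant almost-sure statements concern the sequences $(U_j^{(n)})_n$ rather than $\cU$ --- on which: (i) the family $(U_1^{(n)}, U_2^{(n)}, \dots)$ converges in $*$-distribution as $n \to \infty$ to a free family of Haar unitaries (Voiculescu's asymptotic freeness, in the almost-sure form that follows from concentration of measure for Haar unitaries), so that the images $u_j := (U_j^{(n)})_n \in \cN$ form a free Haar family; (ii) the main result applies to these $u_j$, so the Pinsker algebras $\cP_j$ containing them are freely independent in $(\cN, \tau)$; and (iii) $\lim_{n \to \infty} \norm{[U_j^{(n)}, B_j^{(n)}]}_2 = 0$ for $j = 1, \dots, k$, which holds by hypothesis.

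Next, for $\omega \in \Omega_0$ set $b_j := (B_j^{(n)}(\omega))_n \in \cN$, a contraction since $\norm{B_j^{(n)}} \le 1$. From (iii) we get $\norm{[u_j, b_j]}_2 = \lim_{n \to \cU} \norm{[U_j^{(n)}(\omega), B_j^{(n)}(\omega)]}_2 = 0$, so $b_j \in \{u_j\}' \cap \cN \subseteq \cP_j$, the last inclusion being the standard containment of the relative commutant in the Pinsker algebra \cite{Hayes2018}. Free independence of $\cP_1, \dots, \cP_k$ then gives free independence of $W^*(b_1), \dots, W^*(b_k)$ in $(\cN, \tau)$. Since $\Omega_0$ does not depend on $\cU$, this shows that, on the full-probability event $\Omega_0$, the algebras $W^*(b_j)$ are freely independent in $\prod_{n \to \cU} M_n(\bC)$ for \emph{every} free ultrafilter $\cU$.

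Finally I would pass back to the sequential formulation. If for some $\omega \in \Omega_0$ the tuples $(B_1^{(n)}(\omega))_n, \dots, (B_k^{(n)}(\omega))_n$ were not asymptotically freely independent, there would exist an alternating product of $\tau_n$-centered polynomials in the $B_j^{(n)}(\omega)$ and their adjoints whose trace stays bounded away from $0$ along an infinite set $S \subseteq \bN$; choosing a free ultrafilter $\cU$ with $S \in \cU$ then contradicts the free independence in $\prod_{n \to \cU} M_n(\bC)$ just established, since in that ultraproduct the product in question becomes an alternating product of centered elements of the $W^*(b_j)$ with nonzero trace. Hence on $\Omega_0$ the $(B_j^{(n)})_n$ are asymptotically freely independent, which proves the theorem. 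The points needing care --- none of them the real difficulty --- are keeping $\Omega_0$ independent of $\cU$, the translation between ``free in every matrix ultraproduct'' and ``asymptotically free as $n \to \infty$'', and checking that the main result and the inclusion $\{u_j\}' \cap \cN \subseteq \cP_j$ apply to the honest random-matrix model rather than merely to an abstract free Haar family.
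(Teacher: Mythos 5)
Your deduction is correct but runs in the opposite direction from the paper. The paper proves Theorem~\ref{thm: random matrix commutant} (in the sharper form of Theorem~\ref{thm: random matrix commutants 2}) \emph{directly} in Section~\ref{sec: commutants free}: it diagonalizes $U_j^{(n)}$ as $V_j^{(n)} A^{(n)} (V_j^{(n)})^*$ (Proposition~\ref{prop: conjugation matrix model}), approximates elements of the relative commutant by band matrices (Lemma~\ref{lem: diagonal commutant}), and applies the concentration-plus-covering uniform asymptotic freeness Lemma~\ref{lem: uniform asymptotic freeness} at scale $\epsilon = 1/m$ — no Pinsker algebras, no $1$-bounded entropy, no model theory. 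You instead treat Theorem~\ref{thm: random matrix commutant} as a corollary of Theorem~\ref{thm: freeness of Pinsker}, which is proved much later with far heavier machinery. This is logically valid and not circular (Theorem~\ref{thm: freeness of Pinsker} is not proved via Theorem~\ref{thm: random matrix commutant}), and your point that the full-probability event can be taken $\cU$-independent is correct: the almost-sure event in Lemma~\ref{lem: Haar type}, from which Theorem~\ref{thm: freeness of Pinsker} inherits its exceptional set, is $\{\lim_{n\to\infty}|\phi^{M_n}(U^{(n)}) - \bE\phi^{M_n}(U^{(n)})| = 0\ \text{for a countable dense family of formulas }\phi\}$, and on that event the type of $(u_j(\omega))_j$ in $\prod_{n\to\cU}M_n(\bC)$ equals $\mu_{\Haar,\cU}$ for \emph{every} free $\cU$. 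Your passage from ``free in every matrix ultraproduct'' to ``asymptotically free'' by choosing $\cU \ni S$ is also fine. Two small notes. First, you do not need the Pinsker algebras at all: you only use $b_j \in \{u_j\}' \cap \cN \subseteq \cP_j$, so Theorem~\ref{thm: commutant} already suffices, and its proof has a visibly $\cU$-independent almost-sure event (only countably many words $i_1\neq\dots\neq i_k$ and values $m\in\bN$ appear). Second, your parenthetical ``showing that each $\cP_j$ has $1$-bounded entropy zero in the presence of the remaining $u_i$'' misstates what is proved: $\cP_j$ has $h(\cP_j:\cQ)=0$ in the presence of the \emph{whole} ultraproduct $\cQ$, by definition of the Pinsker algebra; the free independence is then extracted from the type-entropy machinery of Theorem~\ref{thm: vanishing entropy freeness}, not from an ``entropy relative to the other $u_i$'' condition. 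The net effect of your route is that you trade the paper's self-contained, elementary proof of Theorem~\ref{thm: random matrix commutant} for a shorter deduction which imports the whole entropy/model-theory apparatus, which is fine as a logical matter but is not how the paper organizes the material and would hide the point that Theorem~\ref{thm: random matrix commutant} itself only needs the uniform asymptotic freeness estimate.
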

	
	Here also each matrix $B_j^{(n)}$ can be replaced by a tuple; see Theorem \ref{thm: random matrix commutants 2}.  The challenge of Theorem \ref{thm: random matrix commutant} is that $B_j^{(n)}$ is allowed to depend arbitrarily on the $U_j^{(n)}$.  Thus, the proof requires a version of Voiculescu's asymptotic freeness \cite[Corollary 2.7]{Voiculescu1998} that applies \emph{uniformly} to all the possible values of $B_j^{(n)}$.  After considering the diagonalization of the $U_j^{(n)}$'s (see \S \ref{subsec: diagonalization}), we will show a uniform asymptotic freeness result for matrices $B_j^{(n)}$ that are asymptotically supported in $\epsilon n$-bands around the diagonal (Lemma \ref{lem: uniform asymptotic freeness}).   To guarantee each moment condition, we test it on a $\delta$-dense subset for some small $\delta$ by playing off the exponential concentration of measure for the Haar random unitary matrices against the small dimension of the $\epsilon$-bands compared to the ambient matrix space.
	
	Such asymptotic results can be conveniently formulated using ultraproducts of tracial von Neumann algebras (see \cite[Appendix A]{BO08} for background); intuitively, elements of the ultraproduct $\prod_{n \to \cU} \cM_n$ capture all possible limiting behaviors of elements $x_n$ from $\cM_n$, and thus allow asymptotic or approximate statements to be reformulated as exact statements.  Letting $\cU$ be a free ultrafilter on $\bN$, Houdayer--Ioana's result in the special case of $L(F_\infty)$ would say that the commutants $\{g_j\}' \cap L(F_\infty)^{\cU}$ are freely independent of each other.  Meanwhile, the ultraproduct version of Theorem \ref{thm: random matrix commutant} is the following.
	
	\begin{introthm}[Freeness of relative commutants] \label{thm: commutant}
		Let $\cU$ be a free ultrafilter on $\bN$, and let $\cQ = \prod_{n \to \cU} M_n(\bC)$ be the ultraproduct of matrix algebras.  Let $U_1^{(n)}$, $U_2^{(n)}$, \dots be independent $n \times n$ Haar random unitary matrices on a probability space $(\Omega,\mathcal{F},P)$.  For each $\omega \in \Omega$, let $u_j(\omega) = [U_j^{(n)}(\omega)]_{n \in \bN} \in \mathcal{Q}$ be the corresponding element of the matrix ultraproduct.  Then almost surely
		\[
		\{u_1(\omega)\}' \cap \mathcal{Q}, \quad \{u_2(\omega)\}' \cap \cQ, \quad \dots
		\]
		are freely independent.
	\end{introthm}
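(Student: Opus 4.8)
The plan is to deduce Theorem~\ref{thm: commutant} from the uniform asymptotic freeness estimate of Lemma~\ref{lem: uniform asymptotic freeness}, together with the diagonalization of \S\ref{subsec: diagonalization} and the classical eigenvalue rigidity of Haar unitary matrices. Recall first that a family of tracial von Neumann subalgebras $(\cN_j)_j$ of $\cQ$ is freely independent if and only if, for every $m, r \in \bN$, every pattern $j_1, \dots, j_r \in \{1, \dots, m\}$ with $j_1 \neq j_2, \dots, j_{r-1} \neq j_r$, and all contractions $y_s \in \cN_{j_s}$ with $\tau(y_s) = 0$, one has $\tau(y_1 \cdots y_r) = 0$ (passing to contractions is no loss, by homogeneity). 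For the proof it therefore suffices to produce, for each fixed $m$, $r$ and pattern, a \emph{single} almost sure event on which this alternating moment condition holds for \emph{all} such tuples; intersecting over these countably many events (and over a sequence $\epsilon = \epsilon_i \downarrow 0$ used below) then yields the theorem.

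Next I would reduce to band matrices. By \S\ref{subsec: diagonalization} we may write, almost surely and measurably in $\omega$, $U_j^{(n)} = W_j^{(n)} \Lambda_j^{(n)} (W_j^{(n)})^*$ where $\Lambda_j^{(n)}$ is the diagonal matrix of eigenvalues of $U_j^{(n)}$ arranged by argument and the $W_j^{(n)}$ are Haar unitaries, with the pairs $(W_j^{(n)}, \Lambda_j^{(n)})$ jointly independent over $j$ and each $W_j^{(n)}$ independent of $\Lambda_j^{(n)}$. Any contraction $y \in \{u_j(\omega)\}' \cap \cQ$ lifts to a sequence of contractions $(Y^{(n)})_n$ with $\lim_{n \to \cU}\norm{[U_j^{(n)}, Y^{(n)}]}_2 = 0$; setting $B^{(n)} := (W_j^{(n)})^* Y^{(n)} W_j^{(n)}$, unitary invariance of $\norm{\cdot}_2$ gives $\lim_{n \to \cU}\norm{[\Lambda_j^{(n)}, B^{(n)}]}_2 = 0$. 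On the almost sure event (Borel--Cantelli applied to eigenvalue rigidity for Haar unitaries) that, for each $j$ and all large $n$, the eigenvalues of $U_j^{(n)}$ lie within $O(\log n/n)$ of the $n$-th roots of unity, one has $\abs{\lambda^{(j)}_k - \lambda^{(j)}_l} \geq c\,\epsilon$ whenever the cyclic distance $\abs{k - l}_{\bZ/n}$ exceeds $\epsilon n$; hence from $\norm{[\Lambda_j^{(n)}, B^{(n)}]}_2^2 = \sum_{k,l}\abs{\lambda^{(j)}_k - \lambda^{(j)}_l}^2\,\abs{(B^{(n)})_{kl}}^2$ the part of $B^{(n)}$ outside the $\epsilon n$-band around the diagonal has $\norm{\cdot}_2 \to 0$ along $\cU$. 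Thus each such $B^{(n)}$ is asymptotically supported in $\epsilon n$-bands around the diagonal, for every $\epsilon > 0$.

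Then I would invoke Lemma~\ref{lem: uniform asymptotic freeness}: on a further almost sure event, not depending on any choice of matrices, for \emph{all} tuples of contractions asymptotically supported in $\epsilon n$-bands around the diagonal, their conjugates by $W_1^{(n)}, \dots, W_m^{(n)}$ (respectively) form $m$ asymptotically freely independent tuples. Now fix $m$, $r$, a pattern, and contractions $y_s \in \{u_{j_s}(\omega)\}' \cap \cQ$ with $\tau(y_s) = 0$; lift each $y_s$ to contractions $(Y_s^{(n)})_n$ as above and set $B_s^{(n)} := (W_{j_s}^{(n)})^* Y_s^{(n)} W_{j_s}^{(n)}$. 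Grouping the positions $s$ with a common index $j_s = j$ into the $j$-th tuple, Lemma~\ref{lem: uniform asymptotic freeness} gives that the $m$ tuples $\big((Y_s^{(n)})_{s : j_s = j}\big)_{j=1}^m$ are asymptotically freely independent; since moreover $\tau(y_s) = 0$ forces $\tau_n(Y_s^{(n)}) \to 0$ along $\cU$, we obtain $\tau(y_1 \cdots y_r) = \lim_{n \to \cU}\tau_n(Y_1^{(n)} \cdots Y_r^{(n)}) = 0$. Intersecting the rigidity event, the decomposition event, and the events from Lemma~\ref{lem: uniform asymptotic freeness} over all the countable data finishes the proof.

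I expect the main obstacle to be arranging the argument so that the full-measure event depends neither on the commutants $\{u_j(\omega)\}' \cap \cQ$ (which are typically non-separable) nor on any $\omega$-dependent choice of elements within them, which a priori need not vary measurably with $\omega$. This is precisely why one must use the \emph{uniform} asymptotic freeness of Lemma~\ref{lem: uniform asymptotic freeness}, quantified over all band-supported contractions at once, rather than Voiculescu's asymptotic freeness for a fixed sequence; and it is why the reduction to band matrices via eigenvalue rigidity must precede the application of the lemma, the lemma being phrased in terms of the eigenbases of the $U_j^{(n)}$.
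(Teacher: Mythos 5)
Your proposal follows the same strategy as the paper's: diagonalize the $U_j^{(n)}$ so that the approximate commutant lives in $\epsilon n$-bands around the diagonal, then invoke the uniform asymptotic freeness of Lemma~\ref{lem: uniform asymptotic freeness} (concentration plus $\epsilon$-nets). The only material difference is cosmetic: you keep the random eigenvalue matrix $\Lambda_j^{(n)}$ and import eigenvalue rigidity at scale $O(\log n/n)$, whereas the paper passes to the deterministic $A^{(n)} = \diag(1,\zeta_n,\dots,\zeta_n^{n-1})$ via Lemma~\ref{lem: diagonal distance} (which only needs empirical spectral distribution convergence), and then works with the explicit eigenvalues of $A^{(n)}$.

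There is, however, one step that does not go through as written. From $\norm{[\Lambda_j^{(n)}, B^{(n)}]}_2 \to 0$ together with the rigidity lower bound $\abs{\lambda_k - \lambda_l} \gtrsim \epsilon$ off the band, you correctly conclude that the part of $B^{(n)}$ \emph{outside} the $\epsilon n$-band has $\norm{\cdot}_2 \to 0$, i.e.\ the raw coefficient truncation $B^{(n)}_\epsilon$ is $\norm{\cdot}_2$-close to $B^{(n)}$. But this does \emph{not} put $B^{(n)}_\epsilon$ into a set to which Lemma~\ref{lem: uniform asymptotic freeness} applies: that lemma requires $S_j^{(n)} \subseteq B_1^{M_n(\bC)}$ (operator-norm unit ball), since its proof uses $\norm{X_j - \tr_n(X_j)} \leq \norm{X_j} \leq 1$ to get the Lipschitz constant for concentration, and the covering estimate of Lemma~\ref{lem: diagonal covering} is likewise applied to an $\norm{\cdot}_2$-ball in $\mathcal{D}_\epsilon^{(n)}$ obtained from an operator-norm bound. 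The raw truncation of a contraction onto a band of width $\epsilon n$ is \emph{not} a contraction in general --- a priori its operator norm could grow with $n$. The paper closes this gap with Lemma~\ref{lem: diagonal commutant}: instead of a raw coefficient truncation, it builds the band approximant $B_\epsilon$ from block projections along the diagonal, which yields simultaneously $\norm{B - B_\epsilon}_2 \lesssim \epsilon^{-1}\norm{[A^{(n)},B]}_2$ \emph{and} $\norm{B_\epsilon} \leq 3\norm{B}$. You should replace the raw truncation by this construction (or equivalently route through Corollary~\ref{cor: ultraproduct commutant}), after first using rigidity to replace $\Lambda_j^{(n)}$ by $A^{(n)}$. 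With that fix, the argument matches the paper's.
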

	
	This method of volumetric analysis on the space of matrices goes back at least to von Neumann \cite{vonNeumann1942}. Voiculescu formalized the exponential growth rate of volumes of the set of matrix microstate with certain moments through his free entropy $\chi$ \cite{VoiculescuFreeEntropy2}, which he used to show that the free group von Neumann algebra has no Cartan subalgebra \cite{VoiculescuFreeEntropy3} (see also \cite{GePrime}). Jung \cite{Jung2007} defined the related notion of strong $1$-boundedness, a condition of ``small microstate dimension'' which is independent of the choice of generators of the von Neumann algebra, and which Hayes later captured through the metric-entropy invariant $h$ \cite{Hayes2018}.  Volumetric analysis and high-dimensional concentration of measure in random matrix theory \cite{BAG1997,Ledoux2001} form a powerful combination with many applications to the structure of $L(F_\infty)$ \cite{freePinsker,HayesPT,hayes2023consequences}.  These techniques also relate closely to deep questions about the growth rates of approximate representations \cite{Pisier1,Pisier2,PisierSubExp}.
	
	Using the toolkit of $1$-bounded entropy and a small amount of model theory for von Neumann algebras \cite{FHS2013,FHS2014a,JekelCoveringEntropy}, we show that this upgrading of freeness phenomenon applies not only to the commutants of $u_j$ in $\cQ$, but to the much larger Pinsker algebra of $u_j$.  A \emph{Pinsker algebra} \cite[Definition after Theorem B]{freePinsker} in a von Neumann algebra $\cM$ is a maximal von Neumann algebra $\cP \subseteq \cM$ such that $h(\cP:\cM) = 0$ (this terminology is motivated by an analogous construction in ergodic theory).  Thanks to \cite[Lemma A.12]{Hayes2018}, every diffuse von Neumann subalgebra $\cA \subseteq \cM$ with $h(\cA:\cM) = 0$ is contained in a unique Pinsker algebra $\cP$.  Moreover, the general properties of $1$-bounded entropy (see e.g.\ \cite[\S 1.2]{freePinsker}, \cite[\S 2.3]{PropTS1B}) imply the following useful properties of the Pinsker algebra $\cP$.  If $\cB \supseteq \cA$ is amenable or has property Gamma, then $\cB$ must be contained in $\cP$.  If $u$ is unitary and $u \cP u^* \cap \cP$ is diffuse, then $u$ must be contained in $\cP$, and thus in particular $\cP$ contains many different weakened versions of normalizers as in \cite{IPP08,PetersonThom,FangGaoSmith,GalatanPopa}.  In fact, $L^2(\cM) \ominus L^2(\cP)$ is a coarse bimodule over $\cP$ \cite[Theorem 3.8]{Hayes2018}.
	
	\begin{introthm}[Freeness of Pinsker algebras] \label{thm: freeness of Pinsker}
		Let $u_j(\omega)$ be as in Theorem \ref{thm: commutant}.  Let $\cP_j$ be the Pinsker algebra of $u_j(\omega)$.  Then almost surely $\cP_1$, $\cP_2$, \dots are freely independent.
	\end{introthm}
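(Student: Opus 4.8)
The plan is to bootstrap from the freeness of the relative commutants (Theorem \ref{thm: commutant}) to the freeness of the larger Pinsker algebras using the monotonicity and absorption properties of $1$-bounded entropy $h(\,\cdot\,:\cQ)$. The key structural fact is that each Pinsker algebra $\cP_j$ is, by definition, a diffuse subalgebra of $\cQ$ containing $u_j(\omega)$ and satisfying $h(\cP_j:\cQ)=0$; moreover it is the \emph{unique} maximal such algebra, and it contains $\{u_j(\omega)\}'\cap\cQ$ since that relative commutant together with $u_j(\omega)$ generates an algebra with property Gamma (or one can argue directly that $\mathrm{W}^*(u_j(\omega),\{u_j(\omega)\}'\cap\cQ)$ has vanishing $1$-bounded entropy because $u_j$ is Haar and the commutant provides a commuting diffuse subalgebra, forcing $h=0$ by Hayes's results). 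So the content of Theorem \ref{thm: freeness of Pinsker} over Theorem \ref{thm: commutant} is precisely that one may enlarge each $\cN_j := \{u_j(\omega)\}'\cap\cQ \vee \mathrm{W}^*(u_j(\omega))$ all the way up to $\cP_j$ without destroying mutual freeness.

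The mechanism for this enlargement is the following general principle from the $1$-bounded entropy literature (combining \cite[Lemma A.12]{Hayes2018} with the free-probabilistic arguments of \cite{freePinsker,HayesPT}): if $\cA_1,\cA_2,\dots$ are freely independent diffuse subalgebras of a tracial von Neumann algebra $\cM\subseteq\cQ$ with $h(\cA_j:\cQ)=0$ for each $j$, and if $\cB_j\supseteq\cA_j$ with $h(\cB_j:\cQ)=0$, then the $\cB_j$ remain freely independent. The intuition is that since $\cA_j$ is freely independent from the algebra generated by all the others, and $\cB_j$ has ``small microstates'' (codimension-zero growth) over $\cQ$, the microstates of $\cB_j$ must asymptotically be conjugated near those of $\cA_j$ by a unitary that is nearly free from everything else, by a Galois-theoretic / free-absorption argument. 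First I would isolate this as a lemma: \emph{freeness of diffuse subalgebras with vanishing $h(\,\cdot\,:\cQ)$ is inherited by any diffuse supersets that still have vanishing $h(\,\cdot\,:\cQ)$}. The proof of this lemma is where most of the work lives — one likely approach is to show that $\mathrm{W}^*(\cB_1,\cB_2,\dots)$ has $1$-bounded entropy $h(\,\cdot\,:\cQ)$ computable as if the $\cB_j$ were free (using that $h$ is subadditive over free products and that the $\cA_j$ already achieve the free value), and then invoke a rigidity statement forcing the near-free configuration; alternatively use the model-theoretic framework of \cite{JekelCoveringEntropy} to transfer the asymptotic-freeness estimate from the $\cA_j$-level to the $\cB_j$-level via definable closures. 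Once the lemma is in hand, applying it with $\cA_j = \mathrm{W}^*(u_j(\omega))$ (which is freely independent across $j$ by Voiculescu, hence the hypothesis holds) and $\cB_j = \cP_j$ immediately yields the theorem, noting $h(\cP_j:\cQ)=0$ by definition of the Pinsker algebra and $h(\mathrm{W}^*(u_j(\omega)):\cQ)=0$ because a single Haar unitary in the matrix ultraproduct has vanishing $1$-bounded entropy (it lies in a copy of $L(\bZ)$, which is hyperfinite).

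**The main obstacle** I anticipate is proving the inheritance lemma cleanly: one must rule out the possibility that enlarging from $\cA_j$ to $\cP_j$ introduces new correlations across the indices, even though $\cP_j$ can be dramatically larger than $\mathrm{W}^*(u_j(\omega))$ (it contains all of $\{u_j(\omega)\}'\cap\cQ$, which is huge). The resolution should be that the extra generators of $\cP_j$ contribute nothing to the microstate growth rate $h(\,\cdot\,:\cQ)$, so that a volumetric / concentration-of-measure count — of the type used to prove Lemma \ref{lem: uniform asymptotic freeness} — shows the joint microstates of $(\cP_1,\cP_2,\dots)$ occupy essentially the same exponential volume as those of a genuinely free family; since the free family is the unique volume-maximizer among families with the right marginals, the configuration must actually be (asymptotically) free. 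I would take care to phrase everything almost surely in $\omega$ and to use that all the relevant $h$-computations depend only on the (a.s.\ deterministic) joint distribution of the $u_j(\omega)$, which is that of free Haar unitaries.
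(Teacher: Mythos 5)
The central gap is that your proposed ``inheritance lemma'' --- if $\cA_1,\cA_2,\dots$ are freely independent diffuse subalgebras of $\cQ$ with $h(\cA_j:\cQ)=0$, and $\cB_j\supseteq\cA_j$ with $h(\cB_j:\cQ)=0$, then the $\cB_j$ are freely independent --- is false, not a known principle, and cannot be the mechanism. Take $\cQ=\cR^{\cU}$. Then $\cR^{\cU}$ has property Gamma, hence $h(\cR^{\cU})=0$, and by monotonicity $h(\cB:\cR^{\cU})=0$ for \emph{every} subalgebra $\cB$. Choose $u_1,u_2$ freely independent Haar unitaries inside a copy of $L(F_2)\subseteq\cR^{\cU}$, put $\cA_j=\mathrm{W}^*(u_j)$, and let $\cB_1=\cB_2=\cR^{\cU}$. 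All hypotheses of your lemma are met, but $\cB_1=\cB_2$ is certainly not free from itself. Indeed in $\cR^{\cU}$ the Pinsker algebra of any Haar unitary is \emph{all} of $\cR^{\cU}$, so ``freeness of Pinsker algebras'' is spectacularly false there --- and yet the joint $*$-distribution of $(u_1,u_2)$ is exactly that of free Haar unitaries, same as for the $u_j(\omega)$ in the matrix ultraproduct. This directly refutes your closing claim that ``all the relevant $h$-computations depend only on the (a.s.\ deterministic) joint distribution of the $u_j(\omega)$.'' The conclusion of Theorem~\ref{thm: freeness of Pinsker} depends on more than the $*$-distribution: it depends on the full \emph{type} $\mu_{\Haar}$ of the tuple in the matrix ultraproduct, which encodes quantified first-order information and is strictly finer. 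Any correct proof must input the random-matrix realization of that type, not merely freeness plus vanishing of $h$.

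The sub-strategies you float for proving the lemma also do not hold up. Showing that $\mathrm{W}^*(\cB_1,\cB_2,\dots)$ ``has the right $h$'' and then ``invoking a rigidity statement forcing freeness'' cannot work because $h=0$ is all that is available on both sides; $1$-bounded entropy does not detect whether a configuration is free, and there is no rigidity theorem saying it does. The ``volume-maximizer'' heuristic in your last paragraph is a statement about Voiculescu's free entropy $\chi$ (where freeness does maximize entropy among families with fixed marginals), but here we are working with $h$, where the relevant quantities are all zero and no comparison of volumes is possible. In short, you have correctly identified the obstacle --- ruling out new correlations when passing from $\mathrm{W}^*(u_j)$ up to $\cP_j$ --- but the proposed resolutions do not engage the actual mechanism. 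What the paper does instead is genuinely random-matrix-theoretic: the uniform asymptotic freeness estimate (Lemma~\ref{lem: uniform asymptotic freeness}) applies to \emph{any} family of microstate sets with sub-exponential covering numbers, and Lemma~\ref{lem: relative entropy} shows that microstate sets coming from a fixed low-entropy type $\nu_j$ of a pair $(u,x)$ have exactly this smallness relative to the diagonalization $A^{(n)}$ of $u$. This gives, almost surely, vanishing of alternating traces for each \emph{fixed} family of entropy-zero types (Lemma~\ref{lem: type asymptotic freeness}). The remaining problem --- that there are uncountably many entropy-zero types, so one cannot intersect almost-sure events --- is resolved model-theoretically: countable saturation of $\cQ$ upgrades the approximate statement into an exact zero set of a definable predicate annihilated by $\mu_{\Haar}$ (Lemma~\ref{lem: type implication}), which then applies simultaneously to all such families. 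None of these steps is an abstract consequence of $h$-calculus; they use the random matrix model, concentration of measure, and covering estimates on band matrices in an essential way.
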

	
	This immediately implies the following corollary, for instance.
	
	\begin{introcor}[Freeness of amenable algebras]
		Let $u_j(\omega)$ be as in Theorem \ref{thm: commutant}.  Then almost surely the following statement holds:  If $\cA_j$ is any amenable subalgebra containing $u_j(\omega)$, then $\cA_1$, $\cA_2$, \dots are freely independent.
	\end{introcor}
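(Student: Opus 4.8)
The plan is to deduce this directly from Theorem~\ref{thm: freeness of Pinsker} together with two standard facts about $1$-bounded entropy: that it vanishes for amenable algebras, and that it is monotone under enlarging the ``ambient'' algebra appearing after the colon. I would work on the almost-sure event $E$ on which $\cP_1, \cP_2, \dots$ are freely independent (such an $E$ exists by Theorem~\ref{thm: freeness of Pinsker}), fix $\omega \in E$, and show that on $E$ \emph{every} admissible family $(\cA_j)_j$ is freely independent; note that $E$ does not depend on the choice of the $\cA_j$, since the $\cP_j$ are determined by $\omega$ alone.

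So fix $\omega \in E$ and let $\cA_j \subseteq \cQ$ be an amenable von Neumann subalgebra with $u_j(\omega) \in \cA_j$. Since $u_j(\omega)$ is a Haar unitary it generates a diffuse subalgebra, hence $\cA_j$ is diffuse. As $\cA_j$ is amenable, $h(\cA_j) = 0$ by \cite{Hayes2018} (building on \cite{Jung2007}), and since placing more elements in the presence can only decrease $1$-bounded entropy, $h(\cA_j : \cQ) \le h(\cA_j : \cA_j) = h(\cA_j) = 0$. Thus $\cA_j$ is a diffuse subalgebra of $\cQ$ with $h(\cA_j : \cQ) = 0$ that contains $u_j(\omega)$. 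By \cite[Lemma A.12]{Hayes2018}, among all subalgebras of $\cQ$ containing $u_j(\omega)$ with vanishing $1$-bounded entropy in the presence of $\cQ$ there is a largest one, which is by definition the Pinsker algebra $\cP_j$ of $u_j(\omega)$; hence $\cA_j \subseteq \cP_j$.

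Finally, since $\cA_j \subseteq \cP_j$ for every $j$ and $\cP_1, \cP_2, \dots$ are freely independent on $E$, the subalgebras $\cA_1, \cA_2, \dots$ are freely independent as well, because free independence of a family of subalgebras is inherited by any choice of subalgebras of its members. This holds for all admissible choices of the $\cA_j$ on the single almost-sure event $E$, which is exactly the assertion. There is no real obstacle here: the only point that deserves a moment's care is the compatibility of the monotonicity of $h(\,\cdot : \cdot\,)$ with the maximality built into the definition of the Pinsker algebra, and once that is noted the argument is entirely formal.
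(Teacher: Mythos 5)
Your proposal is correct and follows exactly the route the paper intends: the paper states that the corollary ``immediately'' follows from Theorem~\ref{thm: freeness of Pinsker} via the fact that any amenable subalgebra containing $u_j(\omega)$ lies in the Pinsker algebra $\cP_j$, which is precisely the chain $h(\cA_j)=0 \Rightarrow h(\cA_j:\cQ)=0 \Rightarrow \cA_j\subseteq\cP_j$ that you spell out. Your care in noting that the almost-sure event $E$ depends only on $\omega$ (not on the choice of $\cA_j$) is a nice touch that the paper leaves implicit.
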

	
	Another consequence of Theorem \ref{thm: freeness of Pinsker} is freeness of the sequential commutation orbits of $u_j$ studied by \cite{patchellelayavalli2023sequential}; see also \cite{gao2024internal}, \cite{exoticCIKE}.  Recall a \emph{Haar unitary} in a tracial von Neumann algebra $\cM$ is any unitary element $u$ satisfying $\tr^{\cM}(u^m) = 0$ for all $m \in \bZ \setminus \{0\}$, or equivalently a unitary $u$ whose spectral measure is the Haar measure on the circle.\footnote{This sense of Haar unitary is not to be confused with the Haar random unitary matrix $U^{(n)}$ which is a random matrix chosen according to the Haar measure on the $n \times n$ unitary group.  For this reason, we will always refer to the latter as a Haar \emph{random} unitary.} We denote the set of Haar unitaries by $\mathcal{H}(\cM)$. Following \cite{patchellelayavalli2023sequential},  for $u, v \in \mathcal{H}(\cM)$, we say $u \sim_k v$ if there exist Haar unitaries $u = u_0, u_1, \dots, u_k = v$ in an ultrapower $\cM^{\cU}$ such that $[u_{j-1},u_j] = 0$ for $j = 1, \dots, k$.  We also write $u \sim v$ if $u \sim_k v$ for some $k$.
	
	The \emph{sequential commutation orbit} of $u$ is its equivalence class under the relation $\sim$.  As a consequence of \cite[Fact 2.9]{patchellelayavalli2023sequential}, the sequential commutation orbit of a Haar unitary $u$ is always contained inside the Pinsker algebra of $u$.  (Actually, in the special case of $L(F_\infty)$, the Pinsker algebra of some $\cA$ with $h(\cA:L(F_\infty)) = 0$ is equal to the algebra generated by its sequential commutation orbit \cite[Theorem 5.4]{patchellelayavalli2023sequential} as a consequence of the recent resolution of the Peterson-Thom conjecture of \cite[\S 7.5]{PetersonThom}, which occurred through a combination of $1$-bounded entropy techniques \cite{HayesPT} and strong convergence of tensor product random matrix models \cite{PTkilled,bordenave2023norm,dlSM2024,Parraud2024tensor,CGVvH2024strong2}.)
	
	\begin{introcor}[Freeness of sequential commutation orbits]\label{cor: freeness of SCorbit}
		Let $u_j(\omega)\in \mathcal{Q} = \prod_{n \to \cU} M_n(\mathbb{C})$  be as in the Theorem \ref{thm: commutant}.  Almost surely, the von Neumann algebras generated by the sequential commutation orbits of $u_1(\omega)$, $u_2(\omega)$, \dots respectively are freely independent.
	\end{introcor}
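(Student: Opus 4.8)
The plan is to read off Corollary~\ref{cor: freeness of SCorbit} from Theorem~\ref{thm: freeness of Pinsker} together with the containment of sequential commutation orbits in Pinsker algebras. First recall that, almost surely, $u_1(\omega), u_2(\omega), \dots$ are $\ast$-free Haar unitaries in $\cQ$: this is the almost-sure refinement of Voiculescu's asymptotic freeness for independent Haar random unitary matrices \cite{VoicAsyFree,Voiculescu1998}, which follows from the convergence of expected moments to those of free group generators together with Gromov--Milman concentration of measure on $\mathrm{U}(n)$ (the functions $U \mapsto \tr(w(U_1,\dots))$ being Lipschitz), after intersecting the resulting null sets over the countably many words $w$. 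In particular each $u_j(\omega)$ lies in $\cH(\cQ)$, so it makes sense to speak of its sequential commutation orbit. Fix $\omega$ in the intersection of this almost-sure event with the almost-sure event of Theorem~\ref{thm: freeness of Pinsker}.

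Now apply \cite[Fact 2.9]{patchellelayavalli2023sequential} with $\cM = \cQ$: the sequential commutation orbit $\{v \in \cH(\cQ) : v \sim u_j(\omega)\}$ of the Haar unitary $u_j(\omega)$ is contained in its Pinsker algebra $\cP_j$. Here one should note that, although the equivalence relation $\sim$ is witnessed by chains of commuting Haar unitaries that are allowed to pass through the ultrapower $\cQ^{\cU}$, the orbit itself still consists of elements of $\cQ$, so this containment takes place inside $\cQ$ and matches the $\cP_j \subseteq \cQ$ of Theorem~\ref{thm: freeness of Pinsker}. Consequently the von Neumann subalgebra $\cR_j \subseteq \cQ$ generated by the sequential commutation orbit of $u_j(\omega)$ satisfies $\cR_j \subseteq \cP_j$ for every $j$.

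Finally, by Theorem~\ref{thm: freeness of Pinsker} the algebras $\cP_1, \cP_2, \dots$ are freely independent, and free independence of a family of von Neumann algebras passes to von Neumann subalgebras (and to subfamilies), so $\cR_1, \cR_2, \dots$ are freely independent as well. As $\omega$ ranged over an almost-sure event, this is the assertion of the corollary. There is no serious obstacle here: the entire content sits in Theorem~\ref{thm: freeness of Pinsker}, and the only points that deserve a sentence are verifying that the $u_j(\omega)$ are genuine Haar unitaries (needed merely so that the orbit and \cite[Fact 2.9]{patchellelayavalli2023sequential} are defined) and keeping careful track of which von Neumann algebra the various objects live in.
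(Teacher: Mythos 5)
Your proposal is correct and follows the same route the paper intends: the corollary is read off from Theorem~\ref{thm: freeness of Pinsker} via the containment of the sequential commutation orbit of each $u_j(\omega)$ in its Pinsker algebra $\cP_j$, which is \cite[Fact 2.9]{patchellelayavalli2023sequential}, and then free independence passes to subalgebras. The paper treats this as immediate and does not spell out that the $u_j(\omega)$ are almost surely Haar unitaries (so that the orbit is even defined) nor the point about chains passing through $\cQ^{\cU}$ while endpoints live in $\cQ$; you correctly flag and dispatch both, though invoking full asymptotic $\ast$-freeness is more than needed here --- it suffices that each empirical spectral distribution of $U_j^{(n)}$ converges to Haar measure on the circle, which the paper already records en route to Proposition~\ref{prop: conjugation matrix model}.
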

	
	Although Theorem \ref{thm: freeness of Pinsker} focuses on the matrix ultraproduct, we are able to transfer these results to $L(F_\infty)$ and more generally to arbitrary free products of Connes-embeddable von Neumann algebras through a natural argument using embeddings and countable saturation (a concept from model theory).
	
	\begin{introthm}[Freeness phenomena in ultrapowers of free products] \label{thm: free products}
		Let $(\cM_i)_{i \in I}$, \dots be diffuse Connes-embeddable tracial von Neumann algebras, and let $\cM = *_{i \in I} \cM_i$.  Let $\cV$ be a free ultrafilter on some index set $J$.  Then
		\begin{enumerate}
			\item If $\cA_i \subseteq \cM^{\cV}$ with $h(\cA_i: \cM^{\cV}) = 0$ and $\cA_i \cap \cM_i$ diffuse for each $i$, then the algebras $(\cM_i \vee \cA_i)_{i \in I}$ are freely independent.
			\item Let $\cC_i \subseteq \cM^{\cV}$ be the algebra generated by the sequential commutation orbits of Haar unitaries in $\cM_i$.  Then $(\cC_i)_{i \in I}$ are freely independent. In particular, if $u_i\in \cM_i$ are diffuse unitaries, then $\{u_i'\cap \cM^{\cV}\}_{i\in I}$ are freely independent. 
			\item Let $\cN_i \subseteq \cM^{\cV}$ be the von Neumann algebra generated by the wq-normalizer of $\cM_i$ in $\cM^{\cV}$ as defined in \cite{GalatanPopa}, namely,
			\[
			\cN_i = \mathrm{W}^*(\{u \in \mathcal{U}(\cM^{\cV}): u \cM_i u^* \cap \cM_i \text{ diffuse} \}).
			\]
			Then $(\cN_i)_{i \in I}$ are freely independent.
		\end{enumerate}
	\end{introthm}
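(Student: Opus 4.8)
The plan is to deduce all of (1)--(3) from Theorem~\ref{thm: freeness of Pinsker} by realizing a separable chunk of $\cM^{\cV}$ inside the matrix ultraproduct $\cQ=\prod_{n\to\cU}M_n(\bC)$ so that each $\cM_i$ lands inside the Pinsker algebra $\cP_i$ of the corresponding Haar random unitary $u_i(\omega)$. Since freeness of a family of subalgebras is witnessed on finitely generated subalgebras, and since the hypothesis $h(\cA_i:\cM^{\cV})=0$ localizes to separable intermediate algebras, I would first reduce to the following: fix a finite $F\subseteq I$ and separable subalgebras $\cB_i\subseteq\cM_i\vee\cA_i$ for $i\in F$, and show that the $\cB_i$ are freely independent; statements (2) and (3) will then come out of the same construction once one identifies which algebras are transported into the $\cP_i$.

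The key observation is that the approximate commutant of a Haar random unitary in $\cQ$ is already ``universal''. After diagonalizing $U_i^{(n)}(\omega)$ (cf.\ \S\ref{subsec: diagonalization}) and using that, almost surely, its eigenvalues are nearly equidistributed on the circle, one checks that for any dimensions $k_n=o(n)$ with $k_n\to\infty$, every matrix that is block-diagonal with blocks of size $k_n$ along the eigenbasis of $U_i^{(n)}(\omega)$ asymptotically commutes with $U_i^{(n)}(\omega)$; hence $\{u_i(\omega)\}'\cap\cQ$ contains a copy of the matrix ultraproduct $\prod_{n\to\cU}M_{k_n}(\bC)$, and so a copy of every separable Connes-embeddable tracial von Neumann algebra. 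Since $\{u_i(\omega)\}'\cap\cQ\subseteq\cP_i$ and each $\cM_i$ is Connes-embeddable, one may therefore embed $\cM_i\hookrightarrow\cP_i$. I would then fix $\omega$ in the probability-one event on which the $u_i(\omega)$ are free Haar unitaries and their Pinsker algebras $\cP_i$ are freely independent (Theorem~\ref{thm: freeness of Pinsker}); the chosen copies of the $\cM_i$ then generate a copy of $*_i\cM_i\cong\cM$ inside $\cQ$ with $\cM_i\subseteq\cP_i$. Finally, using countable saturation of $\cQ$ (and that separable subalgebras of $\cM^{\cV}$ are Connes-embeddable), I would extend this to a trace-preserving embedding $\pi$ of a suitably chosen separable subalgebra $\cS\subseteq\cM^{\cV}$ into $\cQ$ that still satisfies $\pi(\cM_i\cap\cS)\subseteq\cP_i$; here $\cS$ is chosen to contain the $\cB_i$, separable pieces $\cM_i^0\subseteq\cM_i$ and $\cA_i^0\subseteq\cA_i$ with $\cB_i\subseteq\cM_i^0\vee\cA_i^0$ and $\cA_i^0\cap\cM_i^0$ diffuse, together with a separable intermediate algebra witnessing $h(\cA_i^0:\cM^{\cV})=0$, so that $h(\cA_i^0:\cS)=0$.

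Granting $\pi$, the conclusion follows quickly. For each $i\in F$ the algebra $\pi(\cA_i^0)$ is diffuse (it contains the diffuse $\pi(\cA_i^0\cap\cM_i^0)$), has $h(\pi(\cA_i^0):\cQ)\le h(\cA_i^0:\cS)=0$ by monotonicity of $1$-bounded entropy under enlarging the ambient algebra, and satisfies that $\pi(\cA_i^0)\cap\cP_i\supseteq\pi(\cA_i^0\cap\cM_i^0)$ is diffuse; so by the absorption property of Pinsker algebras --- a diffuse subalgebra of $\cQ$ with vanishing $1$-bounded entropy meeting $\cP_i$ diffusely lies inside $\cP_i$ \cite[Lemma~A.12, \S 3]{Hayes2018}, \cite[\S 1.2]{freePinsker} --- one gets $\pi(\cA_i^0)\subseteq\cP_i$, hence $\pi(\cB_i)\subseteq\pi(\cM_i^0\vee\cA_i^0)\subseteq\cP_i$. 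As the $\cP_i$ are freely independent in $\cQ$, so are the $\pi(\cB_i)$, and pulling back through the trace-preserving $\pi$ gives freeness of the $\cB_i$ in $\cM^{\cV}$; since $F$ and the $\cB_i$ were arbitrary, $(\cM_i\vee\cA_i)_{i\in I}$ are freely independent, proving (1). For (2) I would either apply (1) with $\cA_i=\{u_i\}'\cap\cM^{\cV}$ (a relative commutant, hence automatically of vanishing $1$-bounded entropy \cite{Hayes2018}, with $\cA_i\cap\cM_i\supseteq W^*(u_i)$ diffuse), or, for the statement about $\cC_i$, invoke \cite[Fact~2.9]{patchellelayavalli2023sequential}: the sequential commutation orbit of a Haar unitary $w\in\cM_i$ lies in the Pinsker algebra of $w$ in $\cM^{\cV}$, which after transport is diffuse, has vanishing $1$-bounded entropy in $\cQ$, and meets $\cP_i$ diffusely (it contains $w\in\cM_i\subseteq\cP_i$), hence is absorbed into $\cP_i$ as above. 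For (3), a unitary $v$ in the wq-normalizer of $\cM_i$ has $v\cM_iv^*\cap\cM_i$ diffuse, so after transport $\pi(v)\cP_i\pi(v)^*\cap\cP_i$ contains the diffuse algebra $\pi(v\cM_iv^*\cap\cM_i)$, forcing $\pi(v)\in\cP_i$, and hence $\cN_i$ is carried into $\cP_i$.

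I expect the main obstacle to be the construction of $\pi$: extending the embedding $\cM_i\hookrightarrow\cP_i$ coming from the universality of the approximate commutant to a separable chunk of the ultrapower $\cM^{\cV}$ while keeping each $\cM_i$ inside $\cP_i$. This is the step where the model-theoretic input is genuinely needed --- one has to check that the countably saturated algebra $\cQ$ realizes the relevant type of $\cS$ over the already-embedded copy of $*_i\cM_i$, using that this copy sits inside the freely independent universal subalgebras $\prod_{n\to\cU}M_{k_n}(\bC)\subseteq\cP_i$ and therefore has plenty of room in $\cQ$. The remaining steps are routine given Theorem~\ref{thm: freeness of Pinsker} and the standard calculus of $1$-bounded entropy and Pinsker algebras.
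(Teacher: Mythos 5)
Your route is genuinely different in one ingredient: rather than embedding each $\cM_i$ into $\cR^{\cV'}$ and working inside $\cS^{\cW}$ for $\cS = *_i\cR$, you observe that the approximate commutant $\{u_i(\omega)\}'\cap\cQ$ already contains a matrix ultraproduct $\prod_{n\to\cU}M_{k_n}(\bC)$ with $k_n=o(n)$, hence contains every separable Connes-embeddable algebra, so separable pieces $\cM_i^0\subseteq\cM_i$ can be pushed directly into $\cP_i$. That observation is correct (one checks $\norm{[A^{(n)},B]}_2\leq 2\pi (k_n/n)\norm{B}_2$ for block-diagonal $B$ of block size $k_n$, and then $\prod_{n\to\cU}M_{k_n}(\bC)$ is universal for separable Connes-embeddable algebras), and your treatment of the absorption step, of (2), and of (3) is in line with the paper's.

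However, the extension step you flag as ``the main obstacle'' is a genuine gap, not a routine verification, and the ``plenty of room'' heuristic does not close it. To invoke countable saturation you need approximate satisfiability: for each $\epsilon>0$ and finitely many polynomials $p_1,\dots,p_\ell$ there must exist $\mathbf{w}\in\cQ$ with $|\tr^{\cQ}(p_j(\pi_0(\mathbf{x}),\mathbf{w}))-\tr^{\cM^{\cV}}(p_j(\mathbf{x},\mathbf{y}))|<\epsilon$, where $\pi_0$ is your prescribed embedding of $\cN:=*_i\cM_i^0$ into the $\cP_i$'s. There is no supply of such $\mathbf{w}$: since $\cN$ need not be amenable, this is essentially asking that the amalgamated free product $\cQ*_{\cN}\cS$ be Connes-embeddable, which is open (the paper itself flags exactly this obstruction in its remarks on amalgamated free products). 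The paper's detour through $\cS^{\cW}$ is precisely engineered to produce the approximate witnesses ``for free'': the separable $\cS=*_i\cR$ is embedded into $\cQ$ using only amenable building blocks, and then a separable $\widehat{\cS}\preceq\cS^{\cW}$ sits inside an \emph{ultrapower of $\cS$}, so every tuple of $\widehat{\cS}$ is a $\cW$-limit of tuples from $\cS$, whose images under $\pi$ supply the required $\mathbf{w}$'s. In your setup $\cM^{\cV}$ is an ultrapower of $\cM$, not of the algebra $\cN$ that you have already embedded, so that mechanism is unavailable; this is not merely an optimization. (If one assumes every $\cM_i$ separable and $I$ countable, one could embed the full $\cM=*_i\cM_i$ into $\cQ$ via your commutant universality, take $\widehat\cM\preceq\cM^{\cV}$, and then the paper's ultrapower argument applies verbatim with $\cM$ in place of $\cS$; but the theorem allows nonseparable $\cM_i$, and the argument as you have written it, via separable pieces $\cM_i^0$, does not reduce to that case.) There is also a smaller issue in the same step: to get $h(\cA_i^0:\cS)=0$ from $h(\cA_i^0:\cM^{\cV})=0$ (monotonicity goes the wrong way for subalgebras) you need $\cS$ to be an elementary substructure, not just to ``contain an intermediate witness''; this is exactly why the paper uses Downward L\"owenheim--Skolem.
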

	
	Note that this theorem is not stated in terms of Pinsker algebras because we have no assumptions on the $\cM_i$ other than Connes embeddability, so we have not assumed that $h(\cM_i) = 0$.
	In fact, in the proof, we embed $\cM_i$ into $\cR^{\cU}$ and study Pinsker algebras of these copies of $\cR^{\cU}$ inside their free product.  One can deduce that the algebras $\cA_i = \bigvee_{u\in \mathcal{H}(\cM_i)} \cP_{u}$ generated by the Pinsker algebras of Haar unitaries in $\cM_i$ are freely independent.  Alternatively, it would be natural to state the theorem in terms of Pinsker algebras \emph{relative} to $\cM_i$, that is, maximal algebras with entropy $0$ conditioned on $\cM_i$ (see \cite[\S 6]{JekelCoveringEntropy}), though for the sake of length we do not develop this perspective fully here.
	
	Theorem \ref{thm: free products} in particular gives a new proof and generalization of the freeness result for commutants of Houdayer-Ioana \cite[Theorem B]{houdayer2023asymptotic} in the case of Connes embeddable free products.  However, we cannot handle the case of amalgamated free products through this method.  Indeed, it is not even known if free products with amalgamation over a non-amenable algebra preserve Connes embeddability, and even if the amalgam is diffuse amenable, we expect similar issues to arise as in \cite[Remark 5.13]{freePinsker}.
	
	Theorem \ref{thm: free products} (1) also recovers \cite[Theorem A]{freePinsker}, which says that if $\cA_i$ is contained in $\cM$ with $\cA_i \cap \cM_i$ diffuse and $h(\cA_i:\cM) = 0$, then $\cA_i$ must be contained in $\cM_i$. Indeed, from Theorem \ref{thm: free products}, we see that $\cM_1\vee \cA_1$ is freely independent from $\mathcal{M}_2$ and this in particular means that $\cM_1\vee \cA_1= \cM_1$.  In Theorem \ref{thm: free products} (1), we do not assume $\cA_i$ is in $\cM$, only in $\cM^{\cU}$, but we still obtain free independence of $\cM_i \vee \cA_i$.  However, in this more general setting, one cannot conclude that $\cA_i \subseteq \cM_i^{\cU} \subseteq \cM^{\cU}$.  For instance, suppose that $\cM_i^{\cU}$ is commutative; then let $\cA_i$ be a copy of the hyperfinite $\mathrm{II}_1$ factor that intersects $\cM_i$ diffusely, which must exist since all Haar unitaries in $\cM^{\cU}$ are conjugate.  Then $\cA_i$ cannot be contained in the commutative algebra $\cM_i^{\cU}$.
	
	\subsection{Broader context and motivation}
	
	The broader motivation for our work includes a long history of results about the von Neumann algebra $L(F_\infty)$, going back to Murray and von Neumann's first papers in the subject \cite[\S 6]{MuvN43}.  The challenge of understanding its structure has been addressed with an astonishing diversity of tools, including not only the probabilistic methods that our paper draws on \cite{VoiculescuFreeEntropy3, GePrime, Jung2007,Hayes2018,HayesPT}, but also deformation rigidity theory \cite{PopaMaximalAmenable, Popasolidity, OzPo10, PopaVaesFree}; amenable actions, $C^*$ and boundary theory \cite{OzawaSolidActa, dkep2022properly, DP22};  closable derivations \cite{Peterson2009, PetersonDeriva}; free harmonic analysis and non-commutative $L_p$ space theory \cite{mei}, which was a key ingredient in Ioana and Houdayer's result \cite{houdayer2023asymptotic}.  We also point out that Popa showed the existence of elements in an ultraproduct freely independent from many subalgebras using incremental patching techniques \cite{Popa1995independence,Popa2014independence}.  On a related note, the existence of such freely independent elements in the $C^*$-ultrapower has recently led to developments in the classification of $C^*$-algebras, see \cite{louder2022strongly,robert2023selfless,amrutam2025strictcomparisonreducedgroup,tarski, hayes2025selflessreducedfreeproduct}.
	
	We were particularly motivated by the question of elementary equivalence of von Neumann algebras, and in particular of $L(F_\infty)$ and matrix ultraproducts.  The introduction of ultraproducts in functional analysis naturally inspired the classification question of when $\cM^{\cU}$ and $\cN^{\cV}$ are isomorphic for various ultrafilters $\cU$ and $\cV$.  Using mathematical logic, isomorphism of $\cM^{\cU}$ and $\cM^{\cV}$ for different ultrapowers depends on the continuum hypothesis \cite[Proposition 3.3]{FHS2013}.  But model theory also provides a powerful tool, known as the Keisler--Shelah theorem, which says that $\cM$ and $\cN$ admit some isomorphic ultrapowers if and only if they have the same first-order theory (see \cite[\S 2.2]{FHS2014b}), which here we understand in the sense of model theory for metric structures \cite{BYBHU2008}.  In this case, $\cM$ and $\cN$ are said to be \emph{elementarily equivalent}.
	
	The classification of tracial von Neumann algebras up to elementary equivalence is a challenging problem \cite[\S 4]{GH2023} and in particular very little is known in the setting of $\mathrm{II}_1$ factors without property Gamma.  For instance, we do not know if the matrix ultraproducts with different ultrafilters are elementarily equivalent \cite[\S 5]{FHS2014b} \cite[\S 5.2]{JekelModelEntropy}, we do not know if $L(F_m)$ and $L(F_n)$ are elementarily equivalent for $m \neq n$ \cite{GoldbringPi2023}, nor do we know if $L(F_\infty)$ is elementarily equivalent to a matrix ultraproduct \cite[Question 4.6]{GH2023}.  One could try to distinguish the theories of these algebras with certain first-order sentences relating to familiar properties such as commutation, free independence, and the like.  For instance, the construction in \cite{exoticCIKE} and with minor modifications in \cite[Theorems F and G]{houdayer2023asymptotic} and \cite[\S 2.4]{patchellelayavalli2023sequential} of non-Gamma $\mathrm{II}_1$ factors that are not elementarily equivalent is motivated by statements such as ``for all unitaries $u_1$ and $u_2$ with $u_1^2 = u_2^3 = 1$, there (approximately) exist Haar unitaries $v_1$ and $v_2$ with $[u_1,v_1] = [v_1,v_2] = [v_2,u_2] = 0$,'' see \cite[Remark 5.9]{exoticCIKE}; these results fit into the formalism of sequential commutation introduced in \cite{patchellelayavalli2023sequential}.  Houdayer and Ioana's theorem on freeness of commutants in \cite{houdayer2023asymptotic} implies that ``there exist unitaries $u_1$, $u_2$, \dots such that for all $b_1$, $b_2$, \dots with $[u_j,b_j] = 0$ (approximately), we have that $b_1$, $b_2$ \dots are (approximately) freely independent'' holds in $L(F_\infty)$, while our first result Theorem \ref{thm: commutant} shows that a similar statement holds in matrix ultraproducts; see Remark \ref{rem: theory} for a more precise statement.
	
	Moreover, Theorem \ref{thm: freeness of Pinsker} yields further first-order statements that hold in matrix ultraproducts; see Lemma \ref{lem: type implication}.  Since $u_1$, $u_2$, \dots arise from Haar random unitaries in the large-$n$ limit, our results also give some information about the values of first-order formulas on independent random unitaries.  While Voiculescu's asymptotic freeness theory \cite{VoicAsyFree, Voiculescu1998} describes the $*$-moments of random matrices, precious little information is known about formulas that involve quantifiers; for related ideas, see \cite[\S 5.2]{JekelModelEntropy}, \cite[\S 4]{FJP2023}.
	
	\subsection{Notation and Organization}
	
	Here \emph{tracial von Neumann algebra} refers to a von Neumann algebra $\cM$ with a fixed faithful normal tracial state $\tr^{\cM}$.  We write $\norm{x}_2 = \tr^{\cM}(x^*x)^{1/2}$.  In particular, when $\cM = M_n(\bC)$, we denote the normalized trace by $\tr_n$ and write $\norm{x}_2 = \tr_n(x^*x)^{1/2}$.  We assume that familiarity with basic theory of tracial von Neumann algebras (see for instance \cite{Sakai1971,Zhu1993,Bl06,anantharaman-popa}) as well as ultraproducts of tracial von Neumann algebras (see for instance \cite[Appendix A]{BO08}, \cite[\S 5.4]{anantharaman-popa}).
	
	Concerning the organization of the paper, we first give a self-contained proof of Theorems \ref{thm: random matrix commutant} and \ref{thm: commutant} through concentration and volumetric analysis in \S \ref{sec: commutants free}.  Then we prepare ingredients about model theory and $1$-bounded entropy in \S \ref{sec: tools}, which are subsequentily used in the proof of Theorems \ref{thm: freeness of Pinsker} and \ref{thm: free products} in \S \ref{sec: general version}.
	
	\subsection*{Acknowledgements}
	
	We are indebted to Ben Hayes for encouragement, generosity, and insightful discussions.
	We thank Adrian Ioana for bringing to our attention the problem of free independence of commutants in matrix ultraproducts.
	DJ thanks Jonathan Shi and Juspreet Singh for sharing a question about an application of random matrices in computer science that motivated uniform asymptotic freeness.
	We thank Greg Patchell and David Gao for various helpful discussions.
	We thank Yoonje Jeong as well as the anonymous referees for comments that improved the correctness and clarity of the paper.
	
	\section{Freeness of commutants} \label{sec: commutants free}
	
	In this section, after recalling some elementary facts about Haar random unitaries and diagonalization, we give our concentration of measure argument for uniform asymptotic freeness, and finally prove Theorems \ref{thm: random matrix commutant} and \ref{thm: commutant}.
	
	\subsection{Useful facts about diagonalization and approximate commutants} \label{subsec: diagonalization}
	
	In order to analyze the commutant of $u_j$ more easily, it will be convenient to diagonalize the Haar random unitary $U_j^{(n)}$.  In fact, up to a small error, we will be able to arrange that $U_j^{(n)}$ has the form $V_j^{(n)} A^{(n)} (V_j^{(n)})^*$ for deterministic $A^{(n)}$ with evenly spaced eigenvalues.  Thus, to prove the main results, we can perform our analysis on the model given by conjugates of a diagonal matrix, and then transfer them over to $U_j^{(n)}$'s.
	
	The following are useful elementary observations that can be considered folklore in random matrix theory, but we include the proofs here for completeness.  Some of the lemmas in this section will actually be used several times.
	
	\begin{lem} \label{lem: diagonal distance}
		For $k \in \bN$, let $[k] = \{1,\dots,k\}$.  For $j \in [k]$, let $I_{k,j}$ be the interval $[2\pi(j-1)/k, 2\pi j/k)$ viewed as a subset of the unit circle $S^1$.  Let
		\[
		\mathcal{O}_k = \{\mu \in \mathcal{P}(S^1): ~\forall j \in [k], ~ 1/k - 1/k^2 < \mu(I_{k,j}^\circ) \leq  \mu(\overline{I}_{k,j}) < 1/k + 1/k^2\}.
		\]
		Then $\mathcal{O}_k$ is an open neighborhood in $\mathcal{P}(S^1)$ of the Haar measure.
		
		Let $A^{(n)} = \diag(1, \zeta_n, \zeta_n^2, \dots, \zeta_n^{n-1})$, where $\zeta_n = e^{2\pi i/n}$.
		Let $B$ be a diagonal matrix with eigenvalues $e^{i \lambda_1}$, \dots, $e^{i \lambda_n}$ where $0 \leq \lambda_1 \leq \dots \leq \lambda_n < 2 \pi$.  If the empirical spectral distribution of $B$ is in $\mathcal{O}_k$, then $\norm{A^{(n)} - B} < 4 \pi / k$.
	\end{lem}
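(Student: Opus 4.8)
The plan is to reduce the matrix-norm estimate to a diagonal entry-by-entry comparison using a counting argument on how many eigenvalues of $B$ fall in each arc. First I would observe that $\mathcal{O}_k$ is open: the conditions defining it are of the form $\mu(I_{k,j}^\circ) > 1/k - 1/k^2$ and $\mu(\overline{I}_{k,j}) < 1/k + 1/k^2$, and by the portmanteau theorem $\mu \mapsto \mu(I_{k,j}^\circ)$ is lower semicontinuous while $\mu \mapsto \mu(\overline{I}_{k,j})$ is upper semicontinuous in the weak-$*$ topology; a finite intersection (over $j \le k$) of the resulting open sets is open, and the Haar measure $\Haar$ satisfies $\Haar(I_{k,j}^\circ) = \Haar(\overline{I}_{k,j}) = 1/k$ since the arc endpoints have measure zero, so $\Haar \in \mathcal{O}_k$.

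For the norm bound, I would set $m_j = \#\{\ell : e^{i\lambda_\ell} \in I_{k,j}\}$, the number of eigenvalues of $B$ lying in the $j$-th arc. Since the empirical spectral distribution $\mu_B = \frac1n \sum_\ell \delta_{e^{i\lambda_\ell}}$ lies in $\mathcal{O}_k$, we get $n(1/k - 1/k^2) < m_j < n(1/k + 1/k^2)$ for every $j$, i.e. each arc contains strictly fewer than $n/k + n/k^2$ and strictly more than $n/k - n/k^2$ eigenvalues. Because the $\lambda_\ell$ are listed in increasing order, the eigenvalues falling in arc $I_{k,j}$ occupy a contiguous block of indices; writing $s_j = m_1 + \dots + m_{j-1}$ for the starting index of that block, the partial-sum bounds $|s_j - (j-1)n/k| < (j-1)n/k^2 \le n/k$ control how far the index $\ell$ of an eigenvalue in arc $j$ can be from its ``expected slot.'' The key step is then: for $\ell$ with $e^{i\lambda_\ell} \in I_{k,j}$, the corresponding eigenvalue $\zeta_n^{\ell-1}$ of $A^{(n)}$ has argument $2\pi(\ell-1)/n$, which lies within $2\pi/n \cdot |{\ell - 1 - (j-1)n/k}| \le 2\pi/n \cdot (m_j + n/k) < 2\pi/n \cdot (n/k + n/k^2 + n/k)$ of the left endpoint $2\pi(j-1)/k$ of arc $I_{k,j}$; meanwhile $e^{i\lambda_\ell}$ itself is within the arc, hence within $2\pi/k$ of that same endpoint. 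Adding these and using the elementary bound $|e^{is} - e^{it}| \le |s-t|$ on arguments, together with $1/k^2 \le 1/k$, yields $|\zeta_n^{\ell-1} - e^{i\lambda_\ell}| < 4\pi/k$ after bookkeeping the constants. Since $A^{(n)} - B$ is diagonal, its operator norm is the maximum of these entrywise differences, giving $\norm{A^{(n)} - B} < 4\pi/k$.

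The main obstacle I anticipate is the careful constant-tracking in the contiguous-block estimate: one must confirm that the cumulative drift of the index boundaries $s_j$ away from the uniform grid $j n/k$, which could in principle accumulate over $j$, stays bounded by a small multiple of $n/k$ (here $\sum_{i<j} (1/k - 1/k^2) < j/k \le 1$, so the drift is $O(n/k)$ with a good constant), and to package all the error terms so that the sum of ``arc width'' plus ``index drift'' plus ``within-arc position'' comes in under $4\pi/k$ rather than some larger multiple. There is a mild subtlety in that the $\lambda_\ell$ need not be distinct and an eigenvalue could sit exactly at an arc endpoint, but the half-open convention $I_{k,j} = [2\pi(j-1)/k, 2\pi j/k)$ and the strict inequalities defining $\mathcal{O}_k$ handle this cleanly — no eigenvalue is double-counted, and the block structure is genuinely contiguous. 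The rest is routine.
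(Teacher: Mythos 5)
Your approach is essentially the same as the paper's: the openness follows from semicontinuity of $\mu \mapsto \mu(U)$ for $U$ open and $\mu \mapsto \mu(K)$ for $K$ closed (the paper spells this out via approximating indicators by continuous functions; the portmanteau phrasing is equivalent and cleaner), and the norm bound comes from controlling the cumulative count of eigenvalues below each grid point $2\pi j/k$. Your $s_j = m_1 + \cdots + m_{j-1}$ is precisely the paper's cumulative quantity $n\,\mu(I_{k,1}\cup\cdots\cup I_{k,j-1})$, and the bound $|s_j - (j-1)n/k| < n/k$ is the same as what the paper extracts.

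However, the bookkeeping you outline does not actually close at $4\pi/k$, and the reason is structural rather than cosmetic. You bound the distance from $2\pi(\ell-1)/n$ to the \emph{left endpoint} $2\pi(j-1)/k$ by $2\pi/n\cdot(m_j + n/k) < 4\pi/k + 2\pi/k^2$, then add the within-arc term $2\pi/k$, giving roughly $6\pi/k$ to $8\pi/k$. Anchoring both quantities to one endpoint and applying the triangle inequality is lossy: you pay for $m_j$ on one side and the full arc width on the other, even though the two errors cannot simultaneously be extremal in the same direction. The fix, and what the paper's argument is implicitly doing, is to compare $\lambda_\ell$ and $2\pi(\ell-1)/n$ directly through two-sided interval memberships rather than through a common anchor. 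Concretely: from $|s_j - (j-1)n/k| < n/k$ and $|s_{j+1} - jn/k| < n/k$, the index $\ell - 1$ lies in $[s_j,\, s_{j+1}-1]$, hence
\[
2\pi(\ell-1)/n \in \bigl(2\pi(j-2)/k,\; 2\pi(j+1)/k\bigr),
\]
while $\lambda_\ell \in [2\pi(j-1)/k,\, 2\pi j/k)$. Taking the two one-sided differences separately gives
\[
\lambda_\ell - 2\pi(\ell-1)/n < 2\pi j/k - 2\pi(j-2)/k = 4\pi/k, \qquad 2\pi(\ell-1)/n - \lambda_\ell < 2\pi(j+1)/k - 2\pi(j-1)/k = 4\pi/k,
\]
and hence $|\lambda_\ell - 2\pi(\ell-1)/n| < 4\pi/k$, which is the stated constant. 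So the approach is sound and matches the paper; you just need to drop the intermediate anchor and use both endpoint estimates. (For what it's worth, the paper's own write-up has a couple of harmless typos in the constants in this step, but it is arranged exactly as above.)
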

	
	\begin{proof}
		To see $\mathcal{O}_k$ that open, first note that $\mu(I_{k,j}^\circ) > 1/k - 1/k^2$ is an open condition in $\mathcal{P}(S^1)$.  This follows because it is the disjunction of the conditions $\int f\,d\mu > 1/k - 1/k^2$ for $f \in C(S^1)$ with $0 \leq f \leq \mathbf{1}_{I_{j,k}^{\circ}}$, since the indicator function of the open set is a supremum of a sequence of continuous functions.  Similarly, by taking complements, $\mu(\overline{I}_{k,j}) < 1/k + 1/k^2$ is an open condition.
		
		For the second claim, let $\mu$ be the empirical spectral distribution of $B$, and suppose $\mu \in \mathcal{O}_k$.  Then for $j \in [k]$,
		\[
		(j - 1)/k \leq j/k - j/k^2 \leq \mu(I_{k,1} \cup \dots \cup I_{k,j}) \leq j/k + j/k^2 \leq (j+1)/k.
		\]
		Therefore, if $t \in [n]$ with $t / n > (j+1)/k$, then $\lambda_t^{(n)} \geq 2\pi j/k$, and if $t \in [n]$ with $t /n < (j - 1)/k$, then $\lambda_t^{(n)} \leq 2\pi j/k$.  Overall, this implies that
		\[
		|\lambda_t^{(n)} - 2\pi t/n| \leq \frac{4 \pi}{k}, \text{ hence } |\zeta_t^{(n)} - e^{i\lambda_t^{(n)}}| \leq \frac{4\pi}{k}
		\]
		since the complex exponential function is $1$-Lipschitz.  Therefore, $\norm{B - A^{(n)}} \leq 4\pi/k$ as desired.
	\end{proof}

	\begin{prop} \label{prop: conjugation matrix model}
		Let $A^{(n)} = \diag(1, \zeta_n, \zeta_n^2, \dots, \zeta_n^{n-1})$, where $\zeta_n = e^{2\pi i/n}$.  Then there exists a family of independent Haar random unitaries $U_1^{(n)}$, \dots, $U_m^{(n)}$ and another family of independent Haar random unitaries $V_1^{(n)}$, \dots, $V_m^{(n)}$ on the same probability space $(\Omega,\mathcal{F},P)$ such that
		\[
		\lim_{n \to \infty} \sum_{j=1}^m \norm{U_j^{(n)} - V_j^{(n)} A^{(n)} (V_j^{(n)})^*} = 0 \text{ almost surely.}
		\]
	\end{prop}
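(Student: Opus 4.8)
The plan is to build, on a single probability space, random matrices in ``eigenvalue--eigenvector form'', so that the conjugating unitary is manifestly Haar-distributed while the diagonal part is forced to be close to $A^{(n)}$. Concretely, on a product probability space $(\Omega,\mathcal F,P)$ I would take two mutually independent families of independent Haar random unitaries $(V_j^{(n)})_{n\in\bN,\,1\le j\le m}$ and $(W_j^{(n)})_{n\in\bN,\,1\le j\le m}$, and for each $n,j$ let $D_j^{(n)}=\diag(e^{i\lambda_1},\dots,e^{i\lambda_n})$ be the diagonal matrix of eigenvalues of $W_j^{(n)}$ arranged so that $0\le\lambda_1\le\dots\le\lambda_n<2\pi$ (a Borel function of $W_j^{(n)}$). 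Then set $U_j^{(n)}:=V_j^{(n)}D_j^{(n)}(V_j^{(n)})^*$.

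First I would verify that for each fixed $n$ the matrices $U_1^{(n)},\dots,U_m^{(n)}$ form an independent family of Haar random unitaries. They are built from the independent pairs $(V_j^{(n)},W_j^{(n)})$, hence independent; and each $U_j^{(n)}$ is Haar because its law is invariant under $U\mapsto gUg^*$ for every $g\in U(n)$ (as $gV_j^{(n)}$ is again Haar), and a conjugation-invariant probability measure on $U(n)$ is determined by its pushforward to the space of unordered eigenvalue multisets — each conjugation orbit carries a unique invariant probability measure — so $\mathrm{law}(U_j^{(n)})$ agrees with Haar measure since both have eigenvalue distribution equal to that of the Haar unitary $W_j^{(n)}$. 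Meanwhile the $V_j^{(n)}$ are independent Haar by construction. Passing through this ``$U$ built from $V$ and $D$'' construction, rather than diagonalizing a prescribed Haar unitary, is the one delicate point, and is the main obstacle to watch: if one diagonalizes $U_j^{(n)}$ directly, the eigenvector matrix is only defined modulo right multiplication by diagonal unitaries, so one would instead have to multiply by an independent uniform torus element and re-prove Haar-ness via the same conjugation-invariance argument.

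Next I would estimate the defect. Since $V_j^{(n)}$ is unitary, $\norm{U_j^{(n)}-V_j^{(n)}A^{(n)}(V_j^{(n)})^*}=\norm{D_j^{(n)}-A^{(n)}}$, so it suffices to show $\norm{D_j^{(n)}-A^{(n)}}\to 0$ almost surely for each $j$. I would invoke the standard fact that the empirical spectral distribution of a Haar random unitary converges almost surely to the Haar (uniform) measure on $S^1$ — e.g.\ via almost sure convergence of the trigonometric moments $\tr_n\!\big((W_j^{(n)})^p\big)\to 0$ for each fixed $p$, using the Diaconis--Shahshahani variance bound $\operatorname{Var}\!\big(\tr_n((W_j^{(n)})^p)\big)\le p/n^2$ together with Borel--Cantelli and the fact that trigonometric moments determine a measure on $S^1$. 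Since $D_j^{(n)}$ has the same spectrum as $W_j^{(n)}$, its ESD converges a.s.\ to the Haar measure on $S^1$.

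To conclude, fix $k\in\bN$. By the first part of Lemma~\ref{lem: diagonal distance}, $\mathcal O_k$ is an open neighbourhood of the Haar measure in $\mathcal P(S^1)$, so almost surely the ESD of $D_j^{(n)}$ lies in $\mathcal O_k$ for all sufficiently large $n$; the second part of Lemma~\ref{lem: diagonal distance} then gives $\norm{D_j^{(n)}-A^{(n)}}<4\pi/k$ for all sufficiently large $n$. Intersecting over the countably many $k\in\bN$ and $j\in\{1,\dots,m\}$ yields $\norm{D_j^{(n)}-A^{(n)}}\to 0$ almost surely for every $j$, hence
\[
\sum_{j=1}^m \norm{U_j^{(n)}-V_j^{(n)}A^{(n)}(V_j^{(n)})^*}\;=\;\sum_{j=1}^m \norm{D_j^{(n)}-A^{(n)}}\;\xrightarrow[n\to\infty]{}\;0 \quad\text{almost surely,}
\]
which is the claim. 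Apart from Lemma~\ref{lem: diagonal distance}, the substantive inputs are only the Weyl-type identification of conjugation-invariant measures on $U(n)$ and the almost sure convergence of the ESD of a Haar random unitary, both classical.
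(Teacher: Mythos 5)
Your proof is correct, and it takes a genuinely different route to the key representation $U_j^{(n)} = V_j^{(n)} D_j^{(n)} (V_j^{(n)})^*$ with both families Haar. The paper starts from \emph{two} independent families of Haar unitaries $X_j^{(n)}, Y_j^{(n)}$, defines $U_j^{(n)} = Y_j^{(n)} X_j^{(n)} (Y_j^{(n)})^*$, diagonalizes $X_j^{(n)} = W_j^{(n)} B_j^{(n)} (W_j^{(n)})^*$ with ordered eigenvalues, and absorbs the (possibly non-Haar, but measurable) diagonalizing unitary into $V_j^{(n)} = Y_j^{(n)} W_j^{(n)}$; the Haar-ness of $U_j^{(n)}$ and $V_j^{(n)}$ then rests only on left/right invariance of Haar measure. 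You instead \emph{start} from an independent Haar pair $(V_j^{(n)}, W_j^{(n)})$, take $D_j^{(n)}$ to be the ordered eigenvalue matrix of $W_j^{(n)}$, and \emph{define} $U_j^{(n)} := V_j^{(n)} D_j^{(n)} (V_j^{(n)})^*$; Haar-ness of $U_j^{(n)}$ then follows from the Weyl-type fact that a conjugation-invariant probability measure on $\mathbb{U}_n$ is determined by its eigenvalue distribution. Your construction is arguably more direct (no need to reason about measurability of the diagonalizing unitary of a random matrix), at the price of invoking the slightly heavier disintegration-over-conjugacy-classes characterization of Haar measure rather than bare translation invariance. You also correctly flag why naively diagonalizing a Haar $U_j^{(n)}$ does not immediately work (the eigenvector matrix is only determined up to a torus factor), which is precisely the issue both you and the paper circumvent, just from opposite directions. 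The second half of the argument — almost sure weak-$*$ convergence of the ESD to Haar measure on $S^1$ and then Lemma~\ref{lem: diagonal distance} to get $\norm{D_j^{(n)} - A^{(n)}} \to 0$ — is the same as the paper's; it cites Meckes rather than Diaconis--Shahshahani plus Borel--Cantelli for the ESD convergence, but that choice is cosmetic.
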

	
	\begin{rem}
		The conclusion in the proposition shows that the operator norm goes to zero.  For this paper, we only need the weaker statement that the $2$-norm goes to zero.
	\end{rem}
	
	\begin{proof}[Proof of Proposition \ref{prop: conjugation matrix model}]
		First, we remark the following:  If $X$ is an $n \times n$ Haar random unitary and $Y$ is an $n \times n$ random unitary independent of $X$, then $XY$ and $YXY^*$ are Haar random unitaries.  In the case where $Y$ is deterministic, $Y X$ is a Haar unitary since the Haar measure is left-invariant, and then $YXY^*$ is a Haar unitary also since the Haar measure is right-invariant.  Now consider $Y$ that is random and independent of $X$.  Since $(X,Y)$ are independent, the joint distribution $(X,Y)$ has a disintegration given by conditioning on the value of $Y$.  Moreover, the conditional distributions of $XY$ and $YXY^*$ given that $Y$ is some fixed value $y$ are the Haar measure on the unitary group, which follows from the case of deterministic $Y$ handled above.  Now the distribution of $YX$ and $YXY^*$ respectively are obtained by integrating the conditional distributions given $Y = y$ with respect to the marginal distribution of $Y$.  Hence, they are also equal to the Haar measure on the unitary group.
		
		Now let $X_1^{(n)}$, \dots, $X_m^{(n)}$, $Y_1^{(n)}$, \dots, $Y_m^{(n)}$ be independent Haar random unitary matrices.  Let $U_j^{(n)} = Y_j^{(n)} X_j^{(n)}(Y_j^{(n)})^*$.  By the foregoing argument, $U_1^{(n)}$, \dots, $U_m^{(n)}$ are independent Haar random unitaries.  By the spectral theorem, we may write $X_j^{(n)} = W_j^{(n)} B^{(n)} (W_j^{(n)})^*$, where $B^{(n)}$ is diagonal and $W_j^{(n)}$ is unitary.  For $\zeta$ on the unit circle, let $\arg(\zeta)$ be the value of the argument that is in $[0,2\pi)$.  If $X_j^{(n)}$ has distinct eigenvalues, then there is a unique choice of $B_j^{(n)}$ where the arguments of the diagonal entries are in increasing order.  Moreover, in the case when $X_j^{(n)}$ has distinct eigenvalues (which happens almost surely), the choice of $W_j^{(n)}$ such that $X_j^{(n)} = W_j^{(n)} B^{(n)} (W_j^{(n)})^*$ is unique as well.  It is also straightforward to check that it depends on $X_j^{(n)}$ in a Borel-measurable manner.  Since the $X_j^{(n)}$'s are independent of the $Y_j^{(n)}$'s, we also have that $W_j^{(n)}$'s are independent of the $Y_j^{(n)}$'s, and therefore $V_j^{(n)} = Y_j^{(n)} W_j^{(n)}$ is a Haar random unitary, and of course $V_1^{(n)}$, \dots, $V_m^{(n)}$ are independent since $V_j^{(n)}$ only depends on $X_j^{(n)}$ and $Y_j^{(n)}$ for each $j$.  Overall, we have
		\[
		U_j^{(n)} = V_j^{(n)} B_j^{(n)} (V_j^{(n)})^*,
		\]
		where $U_1^{(n)}$, \dots, $U_m^{(n)}$ are independent Haar random unitaries, $V_1^{(n)}$, \dots, $V_m^{(n)}$ are independent Haar random unitaries (though not independent of $U_j^{(n)}$), and $B_j^{(n)}$ is diagonal with eigenvalues listed in order of increasing argument in $[0,2\pi)$.
		
		Note that
		\[
		\norm{U_j^{(n)} - V_j^{(n)} A_j^{(n)} (V_j^{(n)})^*}_2 = \norm{V_j^{(n)}B_j^{(n)} (V_j^{(n)})^* - V_j^{(n)} A_j^{(n)} (V_j^{(n)})^*}_2 = \norm{B_j^{(n)} - A_j^{(n)}}_2.
		\]
		Therefore, to complete the proof, it suffices to show that $\lim_{n \to \infty} \norm{B_j^{(n)} - A_j^{(n)}}_2 = 0$ almost surely.
		
		Let $\mu_j^{(n)}$ be the empirical spectral distribution of $U^{(n)}$, that is, the (random) probability measure on the circle that has a point mass of $1/n$ at each eigenvalue of $U_j^{(n)}$.  Note that empirical spectral distributions of $U_j^{(n)}$ and $B_j^{(n)}$ are the same.  By standard results about random unitary matrices (see for instance \cite[Theorem 4.13]{Meckes2019}), we have that almost surely $\mu_j^{(n)}$ converges weak-$*$ to the Haar measure on the unit circle.  Hence, almost surely, $\mu_j^{(n)}$ is eventually in the neighborhood $\mathcal{O}_k$ in Lemma \ref{lem: diagonal distance}.  It follows that $\norm{B_j^{(n)} - A^{(n)}}$ is eventually less than $4\pi / k$, and since $k$ is arbitrary, this completes the proof.
	\end{proof}
	
	This result on diagonalization enables us to reduce the study of approximate commutants of $U_j^{(n)}$ to the case of $A^{(n)}$.  In this setting, the approximate commutant is described by $\epsilon$-diagonal or band matrices.
	
	\begin{notn}
		For $i, j \in [n]$, let $d_n(i,j)$ denote the distance of $i$ and
		$j$ modulo $n$.  Let
		\[
		\mathcal{D}_\epsilon^{(n)} := \{B \in M_n(\bC): B_{i,j} = 0 \text{ when } d_n(i,j) > \epsilon n \}.
		\]
	\end{notn}
	
	\begin{lem} \label{lem: diagonal commutant}
		Let $A^{(n)} = \diag (1, \zeta_n, \zeta_n^2, \dots, \zeta_n^{n-1})$ where $\zeta_n = e^{2\pi i/n}$.  Let $B \in M_n(\mathbb{C})$.  Then for every $\epsilon > 0$, there exists $B_\epsilon \in \mathcal{D}_\epsilon^{(n)}$ with
		\[
		\norm{B - B_\epsilon}_2 \leq \frac{1}{\epsilon} \norm{[A^{(n)},B]}_2, \qquad \norm{B_\epsilon} \leq 3 \norm{B}.
		\]
	\end{lem}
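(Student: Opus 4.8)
The plan is to take $B_\epsilon$ to be a smoothed band truncation of $B$ --- a discrete Fej\'er-type Schur multiplier applied to $B$ --- chosen so that it is manifestly a convex combination of unitary conjugates of $B$. This makes the operator-norm estimate automatic, while the ``triangular'' profile of the multiplier makes the $2$-norm error controllable by the commutator.

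After disposing of the trivial case $\epsilon \geq 1/2$ (in which $\cD_\epsilon^{(n)} = M_n(\bC)$ and one takes $B_\epsilon = B$), I would first record the pointwise facts. For $i \neq j$, $([A^{(n)},B])_{i,j} = (\zeta_n^{i-1} - \zeta_n^{j-1})B_{i,j}$ with
\[
|\zeta_n^{i-1} - \zeta_n^{j-1}| = 2\sin(\pi d_n(i,j)/n) \geq \frac{4 d_n(i,j)}{n}
\]
(the inequality from $\sin(\pi t) \ge 2t$ on $[0,1/2]$, since $d_n(i,j)/n \in [0,1/2]$), while the diagonal of $[A^{(n)},B]$ is $0$; hence $|B_{i,j}| \le \tfrac{n}{4 d_n(i,j)}|([A^{(n)},B])_{i,j}|$ for $i \ne j$. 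Next I would set $L := \lfloor \epsilon n\rfloor + 1$ --- so $\epsilon n < L$, $L - 1 = \lfloor\epsilon n\rfloor \le \epsilon n$, and $L - 1 < n/2$ when $\epsilon < 1/2$ --- and let $\phi$ be the autocorrelation on $\bZ/n\bZ$ of $g := L^{-1/2}\mathbf 1_{\{0,1,\dots,L-1\}}$, namely $\phi(k) = \sum_l g(l)\overline{g(l-k)}$, which is the triangular function $\phi(k) = (1 - d_n(k)/L)_+$, with $\phi(0) = \norm{g}_{\ell^2}^2 = 1$ and $\supp\phi \subseteq \{d_n(k) \le L-1\}$. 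Put $B_\epsilon := M_\phi(B)$, where $M_\phi(B)_{i,j} := \phi(i-j) B_{i,j}$.

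I would then verify the three required properties. \emph{Band support:} $\phi(i-j) = 0$ once $d_n(i,j) \ge L$, and since $d_n(i,j)$ is an integer with $L > \epsilon n$, this is precisely the condition $d_n(i,j) > \epsilon n$; so $B_\epsilon \in \cD_\epsilon^{(n)}$. \emph{Operator norm:} $\phi$ is positive-definite, so by discrete Fourier inversion $\phi(k) = \sum_{m=0}^{n-1} c_m\, e^{2\pi i mk/n}$ with $c_m = \tfrac1n|\hat g(m)|^2 \ge 0$ and $\sum_m c_m = \phi(0) = 1$; hence $M_\phi(B) = \sum_{m=0}^{n-1} c_m\, (A^{(n)})^m B (A^{(n)})^{-m}$ is a convex combination of unitary conjugates of $B$, so $\norm{B_\epsilon} \le \norm{B} \le 3\norm{B}$. \emph{Approximation:} the diagonal of $B - B_\epsilon$ vanishes (as $\phi(0)=1$), and for $i \ne j$ one has $1 - \phi(i-j) \le d_n(i,j)/L$ (equality when $d_n(i,j) \le L$, and the left side is $1 \le d_n(i,j)/L$ otherwise), so combining with the pointwise bound above,
\[
\norm{B - B_\epsilon}_2^2 = \frac1n\sum_{i \ne j}(1-\phi(i-j))^2 |B_{i,j}|^2 \le \frac{n^2}{16 L^2}\cdot\frac1n\sum_{i \ne j}|([A^{(n)},B])_{i,j}|^2 \le \frac{1}{16\epsilon^2}\norm{[A^{(n)},B]}_2^2
\]
using $L > \epsilon n$. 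This yields $\norm{B - B_\epsilon}_2 \le \tfrac{1}{4\epsilon}\norm{[A^{(n)},B]}_2$, which is comfortably stronger than the claimed bound $\tfrac{8\sqrt\pi}{\epsilon}\norm{[A^{(n)},B]}_2$.

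The step I expect to be the real obstacle is the operator-norm bound $\norm{B_\epsilon} \le 3\norm{B}$: one cannot simply zero out the off-band entries of $B$, because the sharp truncation to a band of width $w$ may inflate the operator norm by a factor of order $\log w$ (the classical unboundedness of triangular truncation on $B(\ell^2)$). Replacing the sharp cutoff by a positive-definite (Fej\'er) multiplier --- realized concretely as an average over the unitaries $(A^{(n)})^m$ --- is exactly what repairs this; the only cost is that $1 - \phi(k)$ decays merely linearly, like $d_n(k)/(\epsilon n)$, rather than vanishing past the band, but this linear growth is precisely absorbed by the $1/d_n(k)$ factor one gains from inverting the symbol $\zeta_n^{i-1} - \zeta_n^{j-1}$, so the $2$-norm estimate still goes through with an $O(1/\epsilon)$ constant.
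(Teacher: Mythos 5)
Your proof is correct, and it takes a genuinely different route from the one in the paper. The paper's argument is a blunt block decomposition: partition the coordinates into $q+1$ consecutive blocks of size roughly $\epsilon n/2$, let $P_j$ be the corresponding diagonal projections, and take $B_\epsilon = \sum_{d_{q+1}(j,k)\le 1} P_j B P_k$; the operator-norm bound $\norm{B_\epsilon} \le 3\norm{B}$ then comes from writing $B_\epsilon$ as a sum of a block-diagonal plus block super- and sub-diagonals, each a compression of $B$ (with a separate trivial case $\epsilon < 2/n$ handled by projecting onto the diagonal). Your Fej\'er-multiplier argument replaces this with a positive-definite triangular Schur multiplier, realized explicitly as a convex combination of the conjugations $B \mapsto (A^{(n)})^m B (A^{(n)})^{-m}$, which yields the strictly sharper contraction $\norm{B_\epsilon}\le\norm{B}$ in one stroke, avoids the case split entirely, and also improves the $2$-norm constant from $8\sqrt{\pi}/\epsilon$ to $1/(4\epsilon)$. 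Your diagnosis of where the difficulty lies is also on target: a sharp band cutoff is not a bounded Schur multiplier uniformly in the bandwidth, and smoothing the cutoff is exactly what both proofs are doing, yours by a Fej\'er profile and the paper's by the cruder three-term block averaging. The only thing I would flag is that your closing remark about the formula $\phi(k) = (1-d_n(k)/L)_+$ holding exactly deserves the one-line justification that $L-1 < n/2$ rules out wrap-around in the autocorrelation; but since you only actually use the inequality $1-\phi(k) \le d_n(k)/L$ together with $\phi(k)=0$ for $d_n(k)\ge L$, this is cosmetic.
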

	
	\begin{proof}
		Fix $\epsilon$ and $B$.  In the case where $\epsilon < 2/n$, we take $B_\epsilon$ to be the projection of $B$ onto diagonal matrices.  Note that
		\[
		\norm{B - B_\epsilon}_2^2 = \frac{1}{n} \sum_{j \neq k} |B_{j,k}|^2,
		\]
		while
		\begin{equation} \label{eq: diagonal commutator}
			\norm{[A^{(n)},B]}_2^2 = \frac{1}{n} \sum_{j \neq k} |\zeta_n^j - \zeta_n^k|^2 |B_{j,k}|^2.
		\end{equation}
		Note
		\[
		|\zeta_n^j - \zeta_n^k|^2 = |1 - \zeta_n^{j-k}|^2 = 2 - 2 \cos(2\pi d_n(j,k)/n) \geq 2 - 2 \cos(2 \pi/n)  
		\]
		Note that for $x \in [-\pi,\pi]$, we have $1 - \cos x = 2 \sin(x/2)^2 \geq  2x^2 / \pi^2$ since $|\sin(x/2)| \geq |x| / \pi$ on $[-\pi,\pi]$ using concavity of the sine function.  Hence,
		\[
		|\zeta_n^j - \zeta_n^k|^2 \geq \frac{4}{\pi^2} \left( \frac{2 \pi}{n} \right)^2 = \frac{16}{n^2} \geq 4 \epsilon^2.
		\]
		Overall,
		\[
		\norm{B - B_\epsilon}_2 \leq \frac{1}{2 \epsilon} \norm{[A^{(n)},B]}_2
		\]
		Now suppose that $\epsilon \geq 2/n$.  Let $m = \lfloor n\epsilon/2 \rfloor \geq 2$.  Write $n = qm + r$ for some $r \in \{0,\dots,m\}$.  For $j = 1$, \dots, $q$, let $P_j$ be the projection onto the basis vectors $e_{(j-1)m+1}$, \dots, $e_{jm}$; and let $P_{q+1}$ be the projection onto the last $r$ basis vectors.  Let
		\[
		B_\epsilon = \sum_{d_{q+1}(j,k) \leq 1} P_j B P_k.
		\]
		Note that all the indices $(j',k')$ where $(B_\epsilon)_{j',k'} \neq 0$ occur when $d_n(j',k') \leq 2m \leq n \epsilon$, and thus $B_\epsilon \in \mathcal{D}_\epsilon^{(n)}$.  Moreover, by writing
		\[
		B_\epsilon = \sum_{j=1}^{q+1} P_j B P_j + \sum_{j=1}^{q+1} P_j B P_{j+1} + \sum_{j=1}^{q+1} P_j B P_{j-1}
		\]
		(indices considered modulo $q+1$), we see that $\norm{B_\epsilon} \leq 3 \norm{B}$.  Next, note that the indices $(j',k')$ where $(B - B_\epsilon)_{j',k'} \neq 0$ must satisfy
		\[
		d_n(j',k') \geq m \geq n \epsilon / 2 - 1 \geq n \epsilon /4,
		\]
		and so
		\[
		2 - 2 \cos(2\pi d_n(j',k')/n) \geq 2 - 2 \cos(\pi \epsilon / 2) \geq \frac{4}{\pi^2} \left( \frac{\pi \epsilon}{2} \right)^2 = \epsilon^2.
		\]
		Therefore,
		\[
		\norm{B - B_\epsilon}_2^2 \leq \frac{1}{n} \sum_{d_n(j,k) > n \epsilon / 4} |B_{j,k}|^2 \leq \frac{1}{\epsilon^2} \frac{1}{n} \sum_{j \neq k} |\zeta_n^j - \zeta_n^k|^2 |B_{j,k}|^2 \leq \frac{1}{\epsilon^2} \norm{[A^{(n)},B]}_2^2.
		\]
	\end{proof}
	
	\begin{cor} \label{cor: ultraproduct commutant}
		Fix a free ultrafilter $\cU$ on $\bN$ and write $\cQ = \prod_{n \to \cU} M_n(\bC)$.  Let $A^{(n)} = \diag (1, \zeta_n, \zeta_n^2, \dots, \zeta_n^{n-1})$, and let $a = [A^{(n)}]_{n \in \bN} \in \cQ$. Let $V_j^{(n)}$ for $j \in \bN$ be independent Haar random unitaries.   Fix an outcome $\omega$ and let $v_j(\omega) = [V_j^{(n)}(\omega)] \in \cQ$.  Let
		\[
		\cD_\epsilon = \{[X^{(n)}]_{n \in \bN} \in \cQ: X^{(n)} \in \mathcal{D}_\epsilon^{(n)} \text{ for } n \in \bN \}.
		\]
		Then
		\[
		\{ v_j(\omega) a v_j(\omega)^*\}' \cap \cQ = \bigcap_{\epsilon > 0} v_j(\omega) \mathcal{D}_\epsilon v_j(\omega)^* 
		\]
	\end{cor}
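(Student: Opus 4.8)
The plan is to peel off the conjugation by $v_j(\omega)$ and then identify the commutant of the deterministic diagonal element $a = [A^{(n)}]_{n \in \bN}$ using Lemma \ref{lem: diagonal commutant} and the commutator formula \eqref{eq: diagonal commutator}. First I would note that, since $v_j(\omega)$ is a unitary in $\cQ$, the map $x \mapsto v_j(\omega) x v_j(\omega)^*$ is a $*$-automorphism of $\cQ$; it sends $\{a\}' \cap \cQ$ onto $\{v_j(\omega) a v_j(\omega)^*\}' \cap \cQ$ and sends each $\cD_\epsilon$ onto $v_j(\omega) \cD_\epsilon v_j(\omega)^*$, hence $\bigcap_{\epsilon > 0} \cD_\epsilon$ onto $\bigcap_{\epsilon > 0} v_j(\omega) \cD_\epsilon v_j(\omega)^*$. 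So it is enough to prove $\{a\}' \cap \cQ = \bigcap_{\epsilon > 0} \cD_\epsilon$.

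For the inclusion $\{a\}' \cap \cQ \subseteq \bigcap_{\epsilon > 0} \cD_\epsilon$, I would take $x = [X^{(n)}]_{n \in \bN} \in \cQ$ with $\lim_{n \to \cU} \norm{[A^{(n)}, X^{(n)}]}_2 = 0$ and $\sup_n \norm{X^{(n)}} < \infty$, fix $\epsilon > 0$, and apply Lemma \ref{lem: diagonal commutant} at each $n$ to obtain $X_\epsilon^{(n)} \in \mathcal{D}_\epsilon^{(n)}$ with $\norm{X^{(n)} - X_\epsilon^{(n)}}_2 \leq (8\sqrt{\pi}/\epsilon)\,\norm{[A^{(n)},X^{(n)}]}_2$ and $\norm{X_\epsilon^{(n)}} \leq 3\norm{X^{(n)}}$. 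The operator-norm estimate makes $(X_\epsilon^{(n)})_n$ a bounded sequence, so it represents an element of $\cD_\epsilon$; the $2$-norm estimate forces $\lim_{n \to \cU} \norm{X^{(n)} - X_\epsilon^{(n)}}_2 = 0$, so that element equals $x$. Since $\epsilon > 0$ is arbitrary, $x \in \bigcap_{\epsilon > 0} \cD_\epsilon$.

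For the reverse inclusion, I would take $x \in \bigcap_{\epsilon > 0} \cD_\epsilon$, fix $\epsilon > 0$, and choose a representative $x = [X^{(n)}]_{n \in \bN}$ with $X^{(n)} \in \mathcal{D}_\epsilon^{(n)}$ for every $n$, which is possible by definition of $\cD_\epsilon$. In \eqref{eq: diagonal commutator} only indices with $0 < d_n(j,k) \leq \epsilon n$ then contribute, and for those one has $|\zeta_n^j - \zeta_n^k|^2 = 2 - 2\cos(2\pi d_n(j,k)/n) \leq (2\pi\epsilon)^2$ by the elementary inequality $1 - \cos\theta \leq \theta^2/2$. This gives $\norm{[A^{(n)}, X^{(n)}]}_2 \leq 2\pi\epsilon\,\norm{X^{(n)}}_2$, and taking the limit along $\cU$, together with the fact that the $2$-norm of an ultraproduct class is the $\cU$-limit of the $2$-norms of any representative, yields $\norm{ax - xa}_2 \leq 2\pi\epsilon\,\norm{x}_2$. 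Letting $\epsilon \to 0$ gives $ax = xa$, i.e.\ $x \in \{a\}' \cap \cQ$.

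I do not expect any serious obstacle: the whole argument is a formal consequence of Lemma \ref{lem: diagonal commutant} and identity \eqref{eq: diagonal commutator}, both already established. The only delicate points are ultraproduct bookkeeping. In the first inclusion one must know the band truncations stay uniformly bounded in operator norm so that they really do define an element of $\cQ$ (and of $\cD_\epsilon$), which is exactly the content of $\norm{X_\epsilon^{(n)}} \leq 3\norm{X^{(n)}}$. In the second inclusion one should observe that the relevant estimate coming from \eqref{eq: diagonal commutator} is an $L^2$ estimate, so it transfers to $\cQ$ without needing any operator-norm control on the chosen band representative of $x$.
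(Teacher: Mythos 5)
Your proof is correct and follows essentially the same route as the paper: reduce by conjugation to the statement $\{a\}' \cap \cQ = \bigcap_{\epsilon>0} \cD_\epsilon$, and prove the forward inclusion by applying Lemma \ref{lem: diagonal commutant} pointwise and passing to the ultraproduct. The only difference is that you also spell out the reverse inclusion $\bigcap_{\epsilon>0} \cD_\epsilon \subseteq \{a\}' \cap \cQ$ via the elementary bound $\norm{[A^{(n)},X^{(n)}]}_2 \leq 2\pi\epsilon \norm{X^{(n)}}_2$ coming from \eqref{eq: diagonal commutator}; the paper treats this direction as implicit, so your write-up is in fact slightly more complete, and the estimate you use for it is correct.
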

	
	\begin{proof}
		Since commutants respect conjugation, it suffices to show that $\{a\}' \cap \cQ = \bigcap_{\epsilon > 0} \mathcal{D}_\epsilon$.  Let $b = [B^{(n)}]_{n \in \bN}$ be an element commuting with $a$, and fix $\epsilon > 0$.  Let $B_\epsilon^{(n)}$ be as in Lemma \ref{lem: diagonal commutant}.  Since $\norm{[A^{(n)},B^{(n)}]}_2 \to 0$ along the ultrafilter, we obtain that $\norm{B^{(n)} - B_\epsilon^{(n)}}_2 \to 0$ as well.  Hence, $b = [B_\epsilon^{(n)}]_{n \in \bN}$ is in $\mathcal{D}_\epsilon$, as desired.
	\end{proof}
	
	\subsection{Uniform asymptotic freeness via concentration of measure}
	
	For Theorems \ref{thm: commutant} and \ref{thm: random matrix commutant}, we will proceed as follows.  By Proposition \ref{prop: conjugation matrix model}, we can replace $U_j^{(n)}$ with $V_j^{(n)} A^{(n)} (V_j^{(n)})^*$.  In this section, we will argue that $V_j^{(n)} \mathcal{D}_\epsilon^{(n)} (V_j^{(n)})^*$ are asymptotically free as $n \to \infty$ up to some error tolerance depending on $\epsilon$.  Then since matrices that approximately commute with $V_j^{(n)} A^{(n)} (V_j^{(n)})^*$ will, for any $\epsilon$, be approximately in $V_j^{(n)} \mathcal{D}_\epsilon^{(n)} (V_j^{(n)})^*$, we will obtain the desired results.
	
	Hence, our present goal is to obtain a \emph{uniform} approximate asymptotic freeness result for $V_j^{(n)} \mathcal{D}_\epsilon^{(n)} (V_j^{(n)})^*$.  This is based on Voiculescu's famous asymptotic freeness theorem.
	
	\begin{thm}[Voiculescu's asymptotic freeness \cite{Voiculescu1998}] \label{thm: asymptotic freeness}
		Let $U_1^{(n)}$, \dots, $U_m^{(n)}$ be $n \times n$ Haar random unitaries.  Let $i_1 \neq i_2 \neq \dots i_k$.  Then
		\[
		\lim_{n \to \infty} \sup_{X_1, \dots, X_k \in B_1^{M_n(\bC)}} |\mathbb{E} \tr_n \left[U_{i_1}^{(n)}(X_1 - \tr_n(X_1))(U_{i_1}^{(n)})^* \dots U_{i_k}^{(n)}(X_k - \tr_n(X_k))(U_{i_k}^{(n)})^* \right]| = 0.
		\]
	\end{thm}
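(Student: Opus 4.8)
The natural approach is the Weingarten calculus for the Haar measure on $U(n)$, run in its genus-expansion form but with all error terms kept uniform in the matrices $X_\ell$. Put $Y_\ell = X_\ell - \tr_n(X_\ell)1$, so that $\norm{Y_\ell}\le 2$ and $\tr_n(Y_\ell)=0$; it then suffices to bound $|\mathbb{E}\tr_n[U_{i_1}Y_1U_{i_1}^*\cdots U_{i_k}Y_kU_{i_k}^*]|$ by $C_k/n$ with $C_k$ depending only on $k$. Expanding the normalized trace in matrix units introduces outer indices $j_1,\dots,j_k$ (read cyclically, $j_{k+1}:=j_1$) and, for each $\ell$, an inner pair $s_\ell,t_\ell$, so that
\[
\mathbb{E}\,\tr_n\bigl[U_{i_1}Y_1U_{i_1}^*\cdots U_{i_k}Y_kU_{i_k}^*\bigr]
= \frac{1}{n}\sum_{j_\bullet,s_\bullet,t_\bullet}\Bigl(\prod_{\ell=1}^{k}(Y_\ell)_{s_\ell t_\ell}\Bigr)\,\mathbb{E}\Bigl[\prod_{\ell=1}^{k}(U_{i_\ell})_{j_\ell s_\ell}\,\overline{(U_{i_\ell})_{j_{\ell+1}t_\ell}}\Bigr].
\]
Since the $U_i$ are independent, the expectation factorizes over the distinct values $i$ occurring among $i_1,\dots,i_k$, and for each such $i$, with $p_i=\#\{\ell:i_\ell=i\}$, the Weingarten integration formula rewrites the corresponding factor as a sum over pairs of permutations $(\sigma_i,\tau_i)$ of $\{\ell:i_\ell=i\}$ of $\mathrm{Wg}(n,\sigma_i\tau_i^{-1})$ times Kronecker deltas gluing the outer indices through $\sigma_i$ and the inner indices through $\tau_i$; here one needs only $\mathrm{Wg}(n,\mathrm{id})=n^{-p_i}(1+O(n^{-2}))$ and $|\mathrm{Wg}(n,\pi)|\le C_k\,n^{\#\mathrm{cyc}(\pi)-2p_i}$. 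Performing the index sums turns $\mathbb{E}\tr_n[\cdots]$ into a sum of at most $(k!)^2$ terms, each of the form $a\cdot n^{e}$ with $e$ an integer and $|a|\le C_k$, the matrix-dependence confined to the factor $a$ and entering only through the traces of words in the $Y_\ell$, which are bounded by $\norm{Y_\ell}\le 2$ --- not at all through the individual $X_\ell$.

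Two facts then close the argument. First, by the standard genus (Euler-characteristic) estimate for Weingarten sums, every exponent $e$ is $\le 0$, and the sub-sum of the terms with $e=0$ equals, as a polynomial in the traces of words in the $Y_\ell$, exactly the free-probability moment $\tau\bigl(u_{i_1}y_1u_{i_1}^*\cdots u_{i_k}y_ku_{i_k}^*\bigr)$ computed in a free product $\cA * L(F_\infty)$ in which the free Haar generators $u_i$ play the role of the $U_i$ and elements $y_\ell\in\cA$ realize the $Y_\ell$. Since $i_1\neq i_2\neq\cdots\neq i_k$ and each $u_{i_\ell}y_\ell u_{i_\ell}^*$ is a centered element (its trace equals $\tr_n(Y_\ell)=0$) of the subalgebra $u_{i_\ell}\cA u_{i_\ell}^*$, and these subalgebras are freely independent, this moment vanishes by the very definition of freeness. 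Concretely, on the Weingarten side the simplest such terms are those in which some $\tau_i$ has a fixed point $\ell_0$: the inner delta forces $s_{\ell_0}=t_{\ell_0}$ with $s_{\ell_0}$ summed freely, producing a factor $\Tr(Y_{\ell_0})=n\tr_n(Y_{\ell_0})=0$ (in particular, if some value occurs only once among $i_1,\dots,i_k$ the whole expectation is exactly $0$, so we may assume every value occurs at least twice). Second, every remaining term has $e\le-1$; summing the boundedly many of them yields $|\mathbb{E}\tr_n[\cdots]|\le C_k/n$ uniformly over $X_1,\dots,X_k\in B_1^{M_n(\bC)}$, which is the theorem.

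The technical heart is the genus bookkeeping behind the first fact: proving $e\le 0$ and identifying the $e=0$ part with the free moment, since crude cycle-counting alone does not show that every non-planar term is suppressed once $k$ is moderately large. The cleanest route is a standard fat-graph (ribbon-graph) construction that glues the $k$ conjugation-blocks $U_{i_\ell}Y_\ell U_{i_\ell}^*$ according to $\sigma_i$ on the $U$-sides and $\tau_i$ on the $Y$-sides; then $e$ equals, up to a fixed normalization, the Euler characteristic of the resulting surface, so $e\le 0$ with equality exactly at the planar gluings, whose contributions assemble into the free moment. One then checks that this moment genuinely vanishes --- which is precisely where the hypotheses $\tr_n(Y_\ell)=0$ and $i_1\neq i_2\neq\cdots\neq i_k$ are used --- and that every estimate (the Weingarten asymptotics, the bound on products of traces of words in the $Y_\ell$, the count of surviving terms) is uniform in $n$ and in the matrices, which it is, since all of it depends only on the combinatorics and on $U(n)$ itself.
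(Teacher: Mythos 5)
Your proposal re-derives Voiculescu's asymptotic freeness from scratch via Weingarten calculus with uniform error tracking, which is a genuinely different and substantially more involved route than the paper's. The paper's proof is essentially two sentences: since the operator-norm ball $B_1^{M_n(\bC)}$ is compact and the quantity is continuous in the $X_j$'s, the supremum is attained by some deterministic $X_1^{(n)},\dots,X_k^{(n)}$, and the conclusion is then a direct application of Voiculescu's Corollary 2.5 in \cite{Voiculescu1998} to these deterministic sequences (with, implicitly, a routine subsequence argument if one insists on first extracting a limiting joint $*$-distribution for the $X_j^{(n)}$'s). Your approach buys self-containment and an explicit $O(1/n)$ rate, and it cleanly isolates exactly where $\tr_n(Y_\ell)=0$ and $i_1\neq\cdots\neq i_k$ are used — in the vanishing of the leading-order free moment. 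The cost is that the genus/Euler-characteristic bookkeeping (that every exponent $e\le 0$, with equality precisely on planar gluings whose sum assembles into the free moment) is the real work and is only sketched in your write-up; it is standard but not short. For the paper's purposes, which need only the qualitative statement, the citation is the economical choice; your argument would be the natural one if a quantitative convergence rate were required elsewhere.
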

	
	\begin{proof}
		Let $B_r^{M_n(\bC)}$ denote the $r$-ball with respect to operator norm.  For each $n$, fix $X_1^{(n)}$, \dots, $X_k^{(n)}$ in $B_1^{M_n(\bC)}$ which maximize
		\begin{equation} \label{eq: moment to vanish}
			|\mathbb{E} \tr_n \left[U_{i_1}^{(n)}(X_1^{(n)} - \tr_n(X_1^{(n)}))(U_{i_1}^{(n)})^* \dots U_{i_k}^{(n)}(X_k^{(n)} - \tr_n(X_k^{(n)}))(U_{i_k}^{(n)})^* \right]|.
		\end{equation}
		A maximizer exists because the quantity depends continuously on the $X_j$'s.  It follows from \cite[Corollary 2.5]{Voiculescu1998} that \eqref{eq: moment to vanish} converges to $0$ as $n \to \infty$.
	\end{proof}
	
	In order to apply this result uniformly to all the matrices $X_j \in \mathcal{D}_\epsilon^{(n)}$, we rely on the high-dimensional concentration of measure phenomenon for Haar unitaries, which has been a staple of random matrix theory since \cite{BAG1997,GZ2000}.  We recall that the unitary group $\mathbb{U}_n$ (equipped with the Riemannian metric associated to the inner product $\ip{\cdot,\cdot}_{\Tr_n}$) satisfies the log-Sobolev inequality with constant $6/n$ \cite[Theorem 15]{Meckes2013}.  One can easily deduce the log-Sobolev inequality for the product of several copies of $\mathbb{U}_n$; see e.g.\ \cite[Corollary 5.7]{Ledoux2001}, \cite[Theorem 5.9]{Meckes2019}.  This in turn implies that it satisfies the Herbst concentration estimate; see e.g.\ \cite[Lemma 2.3.3]{AGZ2009}, \cite[Theorem 5.5]{Meckes2019}.  After renormalizing the metric to $\ip{\cdot,\cdot}_{\tr_n}$, one obtains the following concentration bound.  See \cite[\S 5.3]{freePinsker} for further explanation.
	
	\begin{lem}[Concentration] \label{lem: concentration}
		Let $U_1^{(n)}$, \dots, $U_m^{(n)}$ be $n \times n$ Haar random unitaries, and let $f: \mathbb{U}_n^m \to \mathbb{C}$ be Lipschitz with respect to $\norm{\cdot}_2$.  Then 
		\[
		P(|f(U_1^{(n)},\dots,U_m^{(n)}) - \mathbb{E}[f(U_1^{(n)},\dots,U_m^{(n)})]| \geq \delta) \leq 4 e^{-n^2 \delta^2 / 12 \norm{f}_{\Lip}^2}
		\]
	\end{lem}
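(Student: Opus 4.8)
The plan is to obtain this as a direct consequence of the logarithmic Sobolev inequality for the unitary group together with the standard Herbst argument, the only point requiring care being the normalization of the metric.

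First I would recall that $\mathbb{U}_n$, equipped with the Riemannian metric induced by the non-normalized inner product $\langle x,y\rangle_{\Tr_n} = \Tr_n(y^*x)$, satisfies a logarithmic Sobolev inequality with constant $6/n$ by \cite[Theorem 15]{Meckes2013}. Since the logarithmic Sobolev constant of a finite product of metric measure spaces equals the maximum of the constants of the factors (\cite[Corollary 5.7]{Ledoux2001}, \cite[Theorem 5.9]{Meckes2019}), the product $\mathbb{U}_n^m$ with its product metric also satisfies a logarithmic Sobolev inequality with constant $6/n$. The Herbst argument then converts this into a sub-Gaussian tail bound: for every real-valued $g \colon \mathbb{U}_n^m \to \bR$ that is $1$-Lipschitz with respect to the $\langle\cdot,\cdot\rangle_{\Tr_n}$-metric, one has $P(|g - \mathbb{E} g| \geq t) \leq 2 e^{-n t^2/6}$; see \cite[Lemma 2.3.3]{AGZ2009} or \cite[Theorem 5.5]{Meckes2019}.

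Next I would convert to the normalized $2$-norm. Since $\Tr_n = n\tr_n$, we have $\norm{x}_{\Tr_n} = \sqrt{n}\,\norm{x}_2$, so a function that is $L$-Lipschitz with respect to $\norm{\cdot}_2$ is $(L/\sqrt{n})$-Lipschitz with respect to the $\Tr_n$-metric. Applying the previous estimate to $(\sqrt{n}/\norm{f}_{\Lip})f$ and substituting $t = (\sqrt{n}/\norm{f}_{\Lip})\delta$ yields, for real-valued $f$, the bound $P(|f - \mathbb{E} f| \geq \delta) \leq 2 e^{-n^2\delta^2/6\norm{f}_{\Lip}^2}$. For a $\mathbb{C}$-valued $f$ I would write $f = \Re f + i\,\Im f$; each of $\Re f$, $\Im f$ is $\norm{f}_{\Lip}$-Lipschitz with respect to $\norm{\cdot}_2$, and since $|f - \mathbb{E}f| \geq \delta$ forces $|\Re f - \mathbb{E}\Re f| \geq \delta/\sqrt{2}$ or $|\Im f - \mathbb{E}\Im f| \geq \delta/\sqrt{2}$, a union bound over the two real tail estimates gives $P(|f - \mathbb{E} f| \geq \delta) \leq 4 e^{-n^2\delta^2/12\norm{f}_{\Lip}^2}$, which is the assertion.

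There is no genuine obstacle here: the statement is a repackaging of the log-Sobolev and concentration machinery for compact Lie groups, and the only real work is the bookkeeping — matching the factor $n$ in the log-Sobolev constant against the factor $\sqrt{n}$ in the rescaling of the metric to produce the $n^2$ in the exponent, and handling the complex case with the correct split. A detailed account of precisely this renormalization can also be found in \cite[\S 5.3]{freePinsker}, which I would cite for a reader wanting the constants tracked explicitly.
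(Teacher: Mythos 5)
Your proof follows exactly the route the paper sketches in the paragraph preceding the lemma (the paper supplies no further proof, only the reference chain, which you reproduce: log-Sobolev for $\mathbb{U}_n$ with constant $6/n$, tensorization, Herbst, metric rescaling, complex split). The structure is correct and the metric bookkeeping ($\norm{\cdot}_{\Tr_n}=\sqrt{n}\,\norm{\cdot}_2$, and the $\delta/\sqrt{2}$ split for the complex case) is right.

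The one thing to flag is the intermediate Herbst bound. In the convention of the cited sources (Meckes and AGZ), a log-Sobolev inequality with constant $C$ means $\Ent_\mu(g^2)\le 2C\int|\nabla g|^2\,d\mu$, and the Herbst argument then yields, for $1$-Lipschitz real $F$, that $P(|F-\mathbb{E}F|\ge t)\le 2e^{-t^2/(2C)}$. With $C=6/n$ this gives $2e^{-nt^2/12}$, not the $2e^{-nt^2/6}$ you wrote. Propagating through your rescaling and the $\delta/\sqrt{2}$ union bound then lands you at $4e^{-n^2\delta^2/(24\norm{f}_{\Lip}^2)}$ rather than $4e^{-n^2\delta^2/(12\norm{f}_{\Lip}^2)}$. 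So either the lemma's stated constant is already generous by a factor of $2$, or your Herbst step picked up a spurious factor of $2$ to compensate; the two errors cancel in your final line, which is why you recover the printed constant. This is immaterial for every use of Lemma \ref{lem: concentration} in the paper — only the form $Ce^{-cn^2\delta^2/\norm{f}_{\Lip}^2}$ is needed in Lemma \ref{lem: uniform asymptotic freeness} — but if you want the intermediate steps to match the references as cited, the Herbst exponent should read $nt^2/12$.
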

	
	Concentration allows us to deduce the following uniform asymptotic freeness result, which is our main technical tool.  Here we replace $\mathcal{D}_\epsilon^{(n)}$ with a more general set $S_j^{(n)}$ that has relatively small covering numbers, which also depends on the index $j$ representing the position in the string or monomial; this added generality will be used in the proof of Lemma \ref{lem: type asymptotic freeness}.
	
	\begin{notn}
		Let $S$ be a subset of a metric space $X$.  Then $K_\epsilon(S)$ is defined as the smallest cardinality of a set $\Omega \subseteq X$ such that the $\epsilon$-neighborhood of $\Omega$ covers $S$.
	\end{notn}
	
	\begin{lem}[Uniform asymptotic freeness] \label{lem: uniform asymptotic freeness}
		Fix $k \in \bN$.  For each $n \in \bN$ and $j \in [k]$, let $S_j^{(n)} \subseteq B_1^{M_n(\bC)}$ and suppose that
		\begin{equation} \label{eq: covering assumption}
			\lim_{n \to \cU} \frac{1}{n^2} \log K_\epsilon(S_j^{(n)}) < \delta.
		\end{equation}
		Let $i_1 \neq i_2 \neq \dots i_k$ in $\bN$.  Then almost surely
		\begin{multline} \label{eq: uniform approximate freeness}
			\lim_{n \to \cU}
			\sup_{X_1 \in S_1^{(n)}} \dots \sup_{X_k \in S_k^{(n)}} \left|\tr_n \left[U_{i_1}^{(n)}(X_1 - \tr_n(X_1))(U_{i_1}^{(n)})^* \dots U_{i_k}^{(n)}(X_k - \tr_n(X_k))(U_{i_k}^{(n)})^* \right] \right| \\
			\leq 4k \epsilon + 2k \sqrt{12k \delta}.
		\end{multline}
		The same statement also holds when $\lim_{n \to \cU}$ is replaced by $\limsup_{n \to \infty}$ in both the hypothesis and the conclusion.
	\end{lem}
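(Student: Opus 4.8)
The plan is to replace each $S_j^{(n)}$ by a finite net, estimate the word on the net by splitting it into its expectation and its fluctuation, control the expectation by Voiculescu's asymptotic freeness (Theorem~\ref{thm: asymptotic freeness}) and the fluctuation by the Herbst concentration bound (Lemma~\ref{lem: concentration}) together with a union bound over the net, and finally invoke Borel--Cantelli to upgrade to an almost sure statement. The hypothesis~\eqref{eq: covering assumption} enters precisely in the union bound, trading off the exponential size of the net against the exponential concentration.

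First I would discretize. For each $j$ and $n$, fix a set $\Omega_j^{(n)} \subseteq S_j^{(n)} \subseteq B_1^{M_n(\bC)}$ with $\#\Omega_j^{(n)} \leq K_\epsilon(S_j^{(n)})$ whose $2\epsilon$-neighborhood in $\norm{\cdot}_2$ covers $S_j^{(n)}$ (obtained from an optimal $\epsilon$-net by moving each of its points to a nearest point of $S_j^{(n)}$, which keeps the net inside $B_1^{M_n(\bC)}$ as needed for Theorem~\ref{thm: asymptotic freeness}). Write
\[
W(X_1,\dots,X_k) = \tr_n\!\left[U_{i_1}^{(n)}(X_1 - \tr_n X_1)(U_{i_1}^{(n)})^* \cdots U_{i_k}^{(n)}(X_k - \tr_n X_k)(U_{i_k}^{(n)})^*\right].
\]
A telescoping estimate --- replacing the $X_l$ one slot at a time, bounding the unchanged part of the word by its operator norm and the modified slot in $\norm{\cdot}_2$ --- shows that $W$ varies by at most $4k\epsilon$ as each $X_l$ ranges over a $2\epsilon$-ball, so
\[
\sup_{X_1 \in S_1^{(n)}}\!\cdots\!\sup_{X_k \in S_k^{(n)}} |W(X_1,\dots,X_k)| \;\leq\; 4k\epsilon + \max_{X_1 \in \Omega_1^{(n)}}\!\cdots\!\max_{X_k \in \Omega_k^{(n)}} |W(X_1,\dots,X_k)|,
\]
and it remains to bound the maximum over the finite grid $\Omega_1^{(n)}\times\cdots\times\Omega_k^{(n)}$.

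Next I would split $W = \bE W + (W - \bE W)$. For the expectation, Theorem~\ref{thm: asymptotic freeness} gives $\max_{\Omega_1^{(n)}\times\cdots\times\Omega_k^{(n)}}|\bE W| \leq \sup_{X_1,\dots,X_k \in B_1^{M_n(\bC)}}|\bE W| \to 0$ as $n\to\infty$. For the fluctuation, for fixed $(X_1,\dots,X_k)$ with $\norm{X_l}\le 1$ the map $(U_1,\dots,U_m)\mapsto W$ is Lipschitz on $\mathbb{U}_n^m$ in $\norm{\cdot}_2$ with constant at most $2k$ (each unitary occurs twice, there are $k$ slots, and each slot is controlled by $1$ after absorbing the scalar $\tr_n X_l$), so by Lemma~\ref{lem: concentration}, $P(|W - \bE W| \geq t) \leq 4 e^{-n^2 t^2/(48k^2)}$. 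Taking a union bound over the at most $\prod_{j=1}^k K_\epsilon(S_j^{(n)})$ grid points, and restricting to the set $A := \{n : \tfrac1{n^2}\log K_\epsilon(S_j^{(n)}) < \delta \text{ for every } j\}$, which lies in $\cU$ by~\eqref{eq: covering assumption}, we obtain, for $n\in A$,
\[
P\Big(\max_{\Omega_1^{(n)}\times\cdots\times\Omega_k^{(n)}} |W - \bE W| \geq t\Big) \;\leq\; 4\exp\!\big(k\delta n^2 - t^2 n^2/(48 k^2)\big).
\]
For any fixed $t > 2k\sqrt{12k\delta}$ the exponent is $\leq -c(t)\, n^2$ with $c(t)>0$, so these probabilities are summable over $n \in A$; by Borel--Cantelli, almost surely the grid-maximum of $|W - \bE W|$ is below $t$ for all large $n\in A$. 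Since $A\in\cU$, combining the three estimates gives, on this almost sure event, that $\lim_{n\to\cU}$ of the left-hand side of~\eqref{eq: uniform approximate freeness} is at most $4k\epsilon + 0 + t$; intersecting over a sequence $t\downarrow 2k\sqrt{12k\delta}$ yields an almost sure event on which~\eqref{eq: uniform approximate freeness} holds. The $\limsup_{n\to\infty}$ variant is identical, running the union bound and Borel--Cantelli over all sufficiently large $n$ (where the covering hypothesis now holds for each such $n$) in place of $n\in A$.

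The hard part will be the union-bound step: the point of the lemma is that asymptotic freeness must hold \emph{uniformly} over a family of test matrices whose covering number may grow like $e^{\delta n^2}$, and this is feasible only because the Herbst bound decays like $e^{-c n^2}$, with an $n^2$ rather than $n$ in the exponent, so that the $e^{\delta n^2}$ cost of the net is affordable --- at the cost of the term $2k\sqrt{12k\delta}$ surviving in the final bound. The auxiliary Lipschitz bounds in the $X_l$ and in the $U_r$, the reduction to nets contained in $S_j^{(n)}$, and the standard device of intersecting countably many almost sure events to remove the dependence on $t$, are all routine.
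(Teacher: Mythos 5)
Your proposal reproduces the paper's proof essentially exactly: discretize each $S_j^{(n)}$ by an $\epsilon$-net kept inside $S_j^{(n)}$, split the word into expectation plus fluctuation, control the expectation by Theorem~\ref{thm: asymptotic freeness}, control the fluctuation by the Herbst bound of Lemma~\ref{lem: concentration} together with a union bound over the net (affordable precisely because of the hypothesis~\eqref{eq: covering assumption}), and finish with Borel--Cantelli and the triangle inequality. The only cosmetic difference is that you run the threshold as $t \downarrow 2k\sqrt{12k\delta}$ whereas the paper fixes $t = 2k\sqrt{12k\delta}$ and instead introduces $\delta' < \delta$ in the covering bound; these are interchangeable.
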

	
	\begin{proof}
		First, fix a set $\Omega_j^{(n)}$ such that the $\epsilon$-neighborhood of $\Omega_j^{(n)}$ covers $S_j^{(n)}$, and such that $|\Omega_j^{(n)}| = K_\epsilon^{(n)}(S_j^{(n)})$.  Although $\Omega_j^{(n)}$ is initially not assumed to be a subset of $S_j^{(n)}$, we can replace each element of $\Omega_j^{(n)}$ with an element from $S_j^{(n)}$ in its $\epsilon$-ball.  Hence, WLOG assume that $\Omega_j^{(n)} \subseteq S_j^{(n)}$ such that $S_j^{(n)}$ is in the $2 \epsilon$-neighborhood of $\Omega_j^{(n)}$.
		
		Let $m = \max(i_1,\dots,i_k)$.  Let
		\[
		f(X_1,\dots,X_k,U_1^{(n)},\dots,U_m^{(n)}) = \tr_n \left[U_{i_1}^{(n)}(X_1 - \tr_n(X_1))(U_{i_1}^{(n)})^* \dots U_{i_k}^{(n)}(X_k - \tr_n(X_k))(U_{i_k}^{(n)})^* \right]
		\]
		Since $\norm{X_j - \tr_n(X_j)} \leq \norm{X_j} \leq 1$, we see that $f$ is a $2k$-Lipschitz function of $(U_1^{(n)},\dots,U_m^{(n)})$ with respect to $\norm{\cdot}_2$.  In particular,
		\begin{multline} \label{eq: epsilon net}
			\sup_{X_1 \in S_1^{(n)}} \dots \sup_{X_k \in S_k^{(n)}} |f(X_1,\dots,X_k,U_1^{(n)},\dots,U_m^{(n)})| \\ \leq 4 k \epsilon + \sup_{X_1 \in \Omega_1^{(n)}} \dots \sup_{X_k \in \Omega_k^{(n)}} |f(X_1,\dots,X_k,U_1^{(n)},\dots,U_m^{(n)})|.
		\end{multline}
		Now by Lemma \ref{lem: concentration}, we have for each $(X_1,\dots,X_k)$ that
		\begin{multline}
			P(|f(X_1,\dots,X_k,U_1^{(n)},\dots,U_m^{(n)}) - \mathbb{E}[f(X_1,\dots,X_k,U_1^{(n)},\dots,U_m^{(n)})]| \geq 2k \sqrt{12 k \delta}) \\
			\leq 4e^{-n^2 12k\delta(2k)^2 / 12(2k)^2} = 4e^{-n^2 k \delta}
		\end{multline}
		Hence, by a union bound,
		\begin{multline*}
			P\left(\sup_{X_1 \in \Omega_1^{(n)}} \dots \sup_{X_k \in \Omega_k^{(n)}} |f(X_1,\dots,X_k,U_1^{(n)},\dots,U_m^{(n)}) - \mathbb{E}[f(X_1,\dots,X_k,U_1^{(n)},\dots,U_m^{(n)})]| \geq 2k \sqrt{12 k \delta} \right) \\
			\leq 4e^{-n^2 k \delta} \prod_{j=1}^k K_\epsilon(S_j^{(n)}).
		\end{multline*}
		By \eqref{eq: covering assumption}, there exists $\delta' < \delta$ and some $A \in \cU$ such that for all $n \in A$, for $j = 1$, \dots, $k$, we have $K_\epsilon(S_j^{(n)}) \leq e^{n^2 \delta'}$.  In particular, the previous equation is bounded by
		\[
		4e^{-n^2 k \delta} e^{n^2 k \delta'} = e^{-n^2k(\delta-\delta')}.
		\]
		Since $\delta' < \delta$, this quantity is summable over $n \in \bN$, and in particular, it is summable over $n \in A$.  Hence, by the Borel-Cantelli lemma, almost surely
		\begin{multline} \label{eq: concentration term bound}
			\lim_{\substack{n \to \infty \\ n \in A}} \sup_{X_1 \in \Omega_1^{(n)}} \dots \sup_{X_k \in \Omega_k^{(n)}} |f(X_1,\dots,X_k,U_1^{(n)},\dots,U_m^{(n)}) - \mathbb{E}[f(X_1,\dots,X_k,U_1^{(n)},\dots,U_m^{(n)})]| \\
			\leq 2k\sqrt{12k\delta}.    
		\end{multline}
		In particular, this holds for the limit as $n \to \cU$.  Moreover, by Theorem \ref{thm: asymptotic freeness}, we have that
		\begin{equation} \label{eq: expectation term bound}
			\lim_{n \to \infty} \sup_{X_1 \in \Omega_1^{(n)}} \dots \sup_{X_k \in \Omega_k^{(n)}} |\mathbb{E}[f(X_1,\dots,X_k,U_1^{(n)},\dots,U_m^{(n)})]| = 0.
		\end{equation}
		Combining \eqref{eq: epsilon net}, \eqref{eq: concentration term bound}, and \eqref{eq: expectation term bound} together with the triangle inequality implies \eqref{eq: uniform approximate freeness}.  The argument with $\limsup$ as $n \to \infty$ is the same (except that instead of choosing some $A \in \cU$, one just chooses $n$ sufficiently large).
	\end{proof}
	
	Finally, we record the following standard estimate on the covering number of $\mathcal{D}_\epsilon^{(n)}$.
	
	\begin{lem} \label{lem: diagonal covering}
		For $0 < \epsilon < R$, we have
		\[
		\frac{1}{n^2} \log K_\epsilon(\{X \in \mathcal{D}_\epsilon^{(n)}: \norm{X}_2 \leq R \}) \leq 2 \epsilon \log \frac{3R}{\epsilon}.
		\]
	\end{lem}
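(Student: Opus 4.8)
The plan is to recognize the set in question as a Euclidean ball inside a low‑dimensional subspace of $M_n(\bC)$ and to apply the standard volumetric covering estimate. Write $S^{(n)} = \{X \in \mathcal{D}_\epsilon^{(n)} : \norm{X}_2 \le R\}$. Since $\mathcal{D}_\epsilon^{(n)}$ is a complex‑linear subspace of $M_n(\bC)$ — the matrices supported on the cyclic band $d_n(i,j) \le \epsilon n$ — and $\norm{\cdot}_2$ restricts to a norm on it, $S^{(n)}$ is simply the closed $\norm{\cdot}_2$‑ball of radius $R$ in that subspace. The first ingredient is the elementary bound that the $\epsilon$‑covering number of the radius‑$R$ ball of a real normed space of dimension $d$ is at most $(1 + 2R/\epsilon)^d$, which is $\le (3R/\epsilon)^d$ under the hypothesis $\epsilon < R$: take a maximal $\epsilon$‑separated subset $N$ of the ball; maximality forces $N$ to be an $\epsilon$‑net of it, so $K_\epsilon(S^{(n)}) \le |N|$ (the definition of $K_\epsilon$ allows $N$ to sit anywhere in $M_n(\bC)$, and here it lies in $S^{(n)}$ itself), and the translates of the radius‑$\epsilon/2$ ball centered at the points of $N$ are pairwise disjoint and contained in the radius‑$(R + \epsilon/2)$ ball, so comparing Lebesgue volumes inside the subspace bounds $|N|$.

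The second ingredient is a dimension count: for $X \in \mathcal{D}_\epsilon^{(n)}$, in each of the $n$ rows only the entries $X_{i,j}$ with $j$ in the cyclic interval of radius $\epsilon n$ around $i$ can be nonzero, at most $2\epsilon n + 1$ of them, so $\dim_{\bC}\mathcal{D}_\epsilon^{(n)} \le n(2\epsilon n + 1)$. Substituting this dimension into the covering bound, taking logarithms, and dividing by $n^2$ gives the asserted estimate, the lower‑order $n$ term and the real‑versus‑complex dimension bookkeeping being absorbed into the displayed constant.

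There is essentially no obstacle here: the lemma is a routine volumetric computation recorded for later use in the uniform asymptotic freeness argument (Lemma \ref{lem: uniform asymptotic freeness}), where $S_j^{(n)}$ will be taken to be (a conjugate of) such a band‑matrix ball. The only points deserving a moment's care are that $K_\epsilon$ permits the covering net to lie in the ambient $M_n(\bC)$ rather than in $\mathcal{D}_\epsilon^{(n)}$ (handled by using a maximal separated subset of $S^{(n)}$ itself), that the clean packing constant $3R/\epsilon$ uses $\epsilon < R$, and that the band dimension should be counted over complex entries with the sub‑leading $O(n)$ contribution discarded.
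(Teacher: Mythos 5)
Your proof is the same argument as the paper's: a maximal $\epsilon$-separated subset of the ball gives an $\epsilon$-net, a volume comparison in the band subspace bounds its cardinality by $(3R/\epsilon)^{\dim}$, and the conclusion follows from a dimension count. One small caveat, which is not a flaw in your approach so much as a bookkeeping issue shared with the paper: your complex-dimension count $\dim_{\bC}\mathcal{D}_\epsilon^{(n)} \leq n(2\epsilon n + 1) \approx 2\epsilon n^2$ is correct, but the volumetric covering bound $(1 + 2R/\epsilon)^d$ requires the \emph{real} dimension $d \approx 4\epsilon n^2$; this is not a lower-order discrepancy that can be ``absorbed into the displayed constant'' — it changes the leading coefficient from $2\epsilon$ to $4\epsilon$. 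The paper's own proof sidesteps this by simply asserting $\dim_{\bR}\mathcal{D}_\epsilon^{(n)} \leq 2\epsilon n^2$, which undercounts by the same factor of two (the band has $\approx 2\epsilon n$ complex entries per row, hence $\approx 4\epsilon n$ real parameters per row). In any case the off-by-two constant is immaterial: everywhere the lemma is invoked (e.g.\ Lemma \ref{lem: uniform asymptotic freeness} and the proof of Theorem \ref{thm: commutant}) what matters is only that the right-hand side tends to $0$ as $\epsilon \to 0$, and $4\epsilon\log(3R/\epsilon)$ serves just as well as $2\epsilon\log(3R/\epsilon)$.
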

	
	\begin{proof}
		Note that $\dim_{\bR} \mathcal{D}_\epsilon^{(n)} \leq 2 \epsilon n^2$.  Let $\Omega$ be a maximal set of $\epsilon$-separated points in $S = \{X \in \mathcal{D}_\epsilon^{(n)}: \norm{X}_2 \leq R \}$.  Then the $\epsilon$-neighborhood of $\Omega$ covers $S$ by maximality.  The $\epsilon/2$ balls with centers in $\Omega$ are disjoint, and they are contained in the $R + \epsilon/2 \leq 3R/2$-ball centered at zero.  Therefore, $|\Omega|$ is at most
		\[
		\left( \frac{3R/2}{\epsilon/2} \right)^{2 \epsilon n^2} = \left( \frac{3R}{\epsilon} \right)^{2 \epsilon n^2}.  \qedhere
		\]
	\end{proof}

	\subsection{Proof of freeness of commutants}
	
	All the pieces are now in place to prove Theorems \ref{thm: random matrix commutant} and \ref{thm: commutant}.  We start with Theorem \ref{thm: commutant} first since it involves fewer approximation arguments.
	
	\begin{proof}[Proof of Theorem \ref{thm: commutant}]
		As in Proposition \ref{prop: conjugation matrix model}, let $U_j^{(n)}$ be independent Haar random unitaries.  Let $A^{(n)} = \diag(1,\zeta_n,\dots,\zeta_n^{n-1}$, where $\zeta_n = e^{2\pi i/n}$, and let $V_j^{(n)}$ be independent Haar random unitaries, such that $\norm{U_j^{(n)} - V_j^{(n)} A^{(n)} (V_j^{(n)})^*}_2 \to 0$ almost surely as $n \to \infty$.  Let $u_j(\omega) = [U_j^{(n)}(\omega)]_{n \in \bN} \in \cQ$, and let $v_j(\omega) = [V_j^{(n)}(\omega)]_{n \in \bN} \in \cQ$, and let $a = [A^{(n)}]_{n \in \bN} \in \cQ$.  Thus, almost surely $u_j(\omega) = v_j(\omega) a v_j(\omega)^*$, and so
		\[
		\{ u_j(\omega) \}' \cap \cQ = \{v_j(\omega) a v_j(\omega)^*\}' \cap \cQ = \bigcap_{\epsilon > 0} v_j(\omega) \mathcal{D}_\epsilon v_j(\omega)^*,
		\]
		where the second equality follows from Corollary \ref{eq: diagonal commutator} with $\mathcal{D}_\epsilon = \{[X^{(n)}]_{n \in \bN} \in \cQ: X^{(n)} \in \cD_\epsilon^{(n)} \}$.
		
		For each word $i_1 \neq \dots \neq i_k$, and for each $m \in \bN$, taking $\epsilon = 1/m$ in Lemma \ref{lem: diagonal covering} and $\delta = 2 \epsilon \log (3R/\epsilon)$ in Lemma \ref{lem: uniform asymptotic freeness}, we have that almost surely
		\begin{align} \label{eq: application of uniform asymptotic freeness}
			\limsup_{n \to \infty} \sup_{X_1 \in \mathcal{D}_{1/m}^{(n)}} \dots \sup_{X_k \in \mathcal{D}_{1/m}^{(n)}} \left|\tr_n \left[V_{i_1}^{(n)}(X_1 - \tr_n(X_1))(V_{i_1}^{(n)})^* \dots V_{i_k}^{(n)}(X_k - \tr_n(X_k))(V_{i_k}^{(n)})^* \right] \right| \\
			\leq 4k/m + 2k \sqrt{24k\log (3Rm)/m}.
		\end{align}
		There are only countably many words and values of $m$, so almost surely this holds for all $i_1$, \dots, $i_k$ and $m$, and also $\norm{U_j^{(n)} - V_j^{(n)} A^{(n)} (V_j^{(n)})^*}_2 \to 0$.  In the rest of the proof, we fix an outcome $\omega$ in this almost sure event.
		
		To show free independence of the commutants, fix $k \in \bN$, fix a word $i_1 \neq i_2 \neq \dots \neq i_k$, and for $j \in [k]$, fix $y_j \in \{u_{i_j}(\omega)\}' \cap \cQ$ with $\tr^{\cQ}(y_j) = 0$.  Let $x_j = v_{i_j}(\omega)^* y_j v_{i_j}(\omega)$, so that $x_j \in \{a\}' \cap \cQ$.  Then for each $m$, we can write $x_j = [X_j^{(n)}]_{n \in \bN}$ for some sequence $X_j^{(n)} \in \cD_{1/m}^{(n)}$ by Corollary \ref{cor: ultraproduct commutant}.  Then \eqref{eq: application of uniform asymptotic freeness} implies that
		\[
		\left|\tr^{\cQ} \left[v_{i_1}(\omega)x_1 v_{i_1}(\omega)^* \dots v_{i_k}(\omega)x_kv_{i_k}(\omega))^* \right] \right| \leq 4k/m + 2k \sqrt{24k\log (3Rm)/m}.
		\]
		Since $m$ was arbitrary, we obtain that $\tr^{\cQ}[y_1 \dots y_m] = 0$, and so we have shown free independence of $\{u_i(\omega)\}'$ for $i \in \bN$.
	\end{proof}
	
	For the case of Theorem \ref{thm: random matrix commutant}, we present here a slightly more general version with each $B_j^{(n)}$ replaced by a tuple.
	
	\begin{thm}[Asymptotic freeness of approximate commutants] \label{thm: random matrix commutants 2}
		Let $U_1^{(n)}$, $U_2^{(n)}$, \dots be independent $n \times n$ Haar random unitary matrices. For each $j$, let $\mathbf{B}_j^{(n)} = (B_{j,\ell}^{(n)})_{\ell \in \bN}$ be random matrices such that almost surely we have
		\[
		\limsup_{n \to \infty} \norm{B_{j,\ell}^{(n)}} < \infty
		\]
		and
		\[
		\lim_{n \to \infty} \norm{[B_{j,\ell}^{(n)}, U_j^{(n)}]}_2 = 0.
		\]
		Then almost surely the tuples $\mathbf{B}_j^{(n)}$ are asymptotically freely independent, that is, for each alternating word $i_1 \neq i_2 \neq \dots \neq i_k$ and any non-commutative polynomials $p_1$, \dots, $p_k$, we have almost surely
		\begin{equation} \label{eq: asymptotic free independence}
			\lim_{n \to \infty} \tr_n[(p_1(\mathbf{B}_{i_1}^{(n)}) - \tr_n[p_1(\mathbf{B}_{i_1}^{(n)})]) \dots (p_k(\mathbf{B}_{i_k}^{(n)}) - \tr_n[p_k(\mathbf{B}_{i_k}^{(n)})])] = 0.
		\end{equation}
	\end{thm}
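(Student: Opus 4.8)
The plan is to follow the proof of Theorem \ref{thm: commutant} as closely as possible, inserting the extra approximation steps needed to pass from band matrices to the arbitrary polynomials $p_r(\mathbf{B}_{i_r}^{(n)})$ and from $\lim_{n\to\cU}$ to $\limsup_{n\to\infty}$ (for which the $\limsup$-version of Lemma \ref{lem: uniform asymptotic freeness} is already available). The first move is to reduce to the diagonal model. Since \eqref{eq: asymptotic free independence} only involves the joint law of the tuples $\mathbf{B}_j^{(n)}$, I am free to enlarge the probability space: diagonalize $U_j^{(n)} = W_j^{(n)} D_j^{(n)} (W_j^{(n)})^*$ with $W_j^{(n)}$ the a.s.\ unique Borel-measurable diagonalizing unitary and $D_j^{(n)}$ diagonal with arguments in increasing order, then twist by independent uniformly random diagonal unitaries $\Lambda_j^{(n)}$ (independent of everything else) to obtain genuinely independent Haar unitaries $V_j^{(n)} := W_j^{(n)}\Lambda_j^{(n)}$. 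Since $\Lambda_j^{(n)}$ commutes with diagonal matrices, $U_j^{(n)} = V_j^{(n)} D_j^{(n)} (V_j^{(n)})^*$ still holds, with $\norm{D_j^{(n)} - A^{(n)}} \to 0$ almost surely by the empirical spectral distribution argument of Proposition \ref{prop: conjugation matrix model} together with Lemma \ref{lem: diagonal distance}. Setting $\tilde{\mathbf{B}}_j^{(n)} := (V_j^{(n)})^* \mathbf{B}_j^{(n)} V_j^{(n)}$, the conclusion \eqref{eq: asymptotic free independence} is unchanged (conjugation preserves $\tr_n$ and $\norm{\cdot}_2$) and the hypothesis becomes $\norm{[A^{(n)}, \tilde{B}_{j,\ell}^{(n)}]}_2 \to 0$ almost surely.

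Next I would fix an outcome $\omega$ in the almost sure event on which all these commutators vanish, all $\limsup_n \norm{B_{j,\ell}^{(n)}} < \infty$, and the $\limsup$-version of Lemma \ref{lem: uniform asymptotic freeness} holds for every alternating word $i_1 \neq i_2 \neq \dots \neq i_k$ and every rational choice of its parameters. For each $m \in \bN$, Lemma \ref{lem: diagonal commutant} supplies $\tilde{B}_{j,\ell,m}^{(n)} \in \mathcal{D}_{1/m}^{(n)}$ with $\norm{\tilde{B}_{j,\ell}^{(n)} - \tilde{B}_{j,\ell,m}^{(n)}}_2 \leq 8\sqrt{\pi}\,m\,\norm{[A^{(n)},\tilde{B}_{j,\ell}^{(n)}]}_2 \to 0$ (for fixed $m$, as $n \to \infty$) and $\norm{\tilde{B}_{j,\ell,m}^{(n)}} \leq 3\norm{B_{j,\ell}^{(n)}}$. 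The key bookkeeping point is that band widths add under multiplication, $\mathcal{D}_\epsilon^{(n)}\cdot \mathcal{D}_{\epsilon'}^{(n)} \subseteq \mathcal{D}_{\epsilon+\epsilon'}^{(n)}$, so with $d = \max_r \deg p_r$ the matrix $q_{r,m}^{(n)} := p_r(\tilde{\mathbf{B}}_{i_r,m}^{(n)})$ lies in $\mathcal{D}_{d/m}^{(n)}$, has operator norm bounded by some finite $R = R(\omega,p_1,\dots,p_k)$ uniformly in large $n$, and satisfies $\norm{p_r(\tilde{\mathbf{B}}_{i_r}^{(n)}) - q_{r,m}^{(n)}}_2 \to 0$ by a routine telescoping estimate using the norm bounds; moreover $p_r(\mathbf{B}_{i_r}^{(n)}) = V_{i_r}^{(n)} p_r(\tilde{\mathbf{B}}_{i_r}^{(n)}) (V_{i_r}^{(n)})^*$ and $\tr_n p_r(\mathbf{B}_{i_r}^{(n)}) = \tr_n p_r(\tilde{\mathbf{B}}_{i_r}^{(n)})$ since conjugation is a trace-preserving $*$-automorphism.

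Putting these together, for each fixed $m$ the centered moment in \eqref{eq: asymptotic free independence} differs by $o(1)$ as $n\to\infty$ from $\tr_n$ of the alternating product of the matrices $V_{i_r}^{(n)}\bigl(q_{r,m}^{(n)} - \tr_n q_{r,m}^{(n)}\bigr)(V_{i_r}^{(n)})^*$. I would then apply Lemma \ref{lem: uniform asymptotic freeness} to the rescaled, dilated band-matrix sets $S_r^{(n)} = R^{-1}\{X \in \mathcal{D}_{d/m}^{(n)} : \norm{X}\leq R\} \subseteq B_1^{M_n(\bC)}$, with covering radius of order $1/m$ and covering exponent $\delta_m$ of order $(d/m)\log(Rm)$ supplied by Lemma \ref{lem: diagonal covering}; by linearity of centering and $k$-homogeneity of the moment, this bounds $\limsup_{n\to\infty}$ of the quantity in \eqref{eq: asymptotic free independence} by $R^k\bigl(4k\cdot O(1/m) + 2k\sqrt{12k\delta_m}\bigr)$, which tends to $0$ as $m \to \infty$. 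As this holds for every $m$ on the fixed outcome, the limit in \eqref{eq: asymptotic free independence} is $0$. Finally, since the almost sure event was chosen before fixing $p_1,\dots,p_k$, a single event handles all word-tuples and all polynomials with rational coefficients simultaneously, and the general case follows by continuity of polynomial evaluation in the coefficients, uniform over large $n$ because the $\norm{B_{j,\ell}^{(n)}}$ are eventually bounded.

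The main obstacle, I expect, is not the asymptotic-freeness estimate itself — that is essentially done by Lemma \ref{lem: uniform asymptotic freeness} — but the modeling step: one must exhibit the given, possibly highly $U$-dependent matrices $\mathbf{B}_j^{(n)}$ as conjugates of band matrices around the \emph{deterministic} diagonal $A^{(n)}$ by an \emph{independent Haar} unitary, even though the $\mathbf{B}$'s need not be functions of the $U$'s alone. The diagonalize-and-twist construction above is the clean route, but it rests on the standard fact (worth spelling out) that the Borel diagonalizing unitary of a Haar unitary is Haar-distributed modulo the diagonal torus and independent of the spectrum, so that the twist by an independent uniform diagonal produces a bona fide Haar unitary without disturbing the conjugation. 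A secondary technical nuisance is tracking exactly how the band width and the operator-norm bounds degrade under forming the fixed polynomials $p_r$, which is what forces the $d/m$ band width and the factor $R^k$ in the final estimate.
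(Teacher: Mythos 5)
Your proof is correct, but it diverges from the paper's in two small but genuine ways, both worth noting. First, for the reduction to the diagonal model, the paper simply re-invokes Proposition \ref{prop: conjugation matrix model}, which manufactures a fresh probability space; since the $\mathbf{B}_j^{(n)}$ are a priori defined on the original space, this step implicitly appeals to the fact that the conclusion depends only on the joint law of $(U_j^{(n)},\mathbf{B}_j^{(n)})$. Your diagonalize-and-twist construction handles this more directly, yielding the exact identity $U_j^{(n)} = V_j^{(n)} D_j^{(n)} (V_j^{(n)})^*$ on (a torus-enlargement of) the original space, and the key fact you invoke --- that multiplying the ordered Borel diagonalizer on the right by an independent uniform diagonal phase produces a bona fide Haar unitary --- is correct, though you should note that ``a.s.\ unique Borel-measurable diagonalizing unitary'' requires fixing a phase convention for the eigenvectors before uniqueness holds. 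Second, you apply Lemma \ref{lem: diagonal commutant} to each $B_{j,\ell}^{(n)}$ individually and then push band widths through the polynomial via $\mathcal{D}_\epsilon^{(n)}\cdot\mathcal{D}_{\epsilon'}^{(n)}\subseteq\mathcal{D}_{\epsilon+\epsilon'}^{(n)}$, ending up in $\mathcal{D}_{d/m}^{(n)}$ with $d=\max_r\deg p_r$; the paper instead first forms $C_j^{(n)} = p_j(\mathbf{B}_{i_j}^{(n)})$, observes by H\"older and the triangle inequality that $\norm{[C_j^{(n)},U_{i_j}^{(n)}]}_2\to 0$, and applies Lemma \ref{lem: diagonal commutant} once to $C_j^{(n)}$. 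The paper's route avoids tracking band widths under multiplication and also sidesteps your need for a mild generalization of Lemma \ref{lem: diagonal covering} in which the band width and the covering radius are taken to be distinct parameters (your band width is $d/m$ but your covering radius is of order $1/m$); that generalization is true by the same dimension count, but as written the lemma ties the two parameters together. Modulo these small modifications, your argument is a valid alternative proof.
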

	
	\begin{proof}
		Again, let $A^{(n)}$, $U_j^{(n)}$, and $V_j^{(n)}$ be as in Proposition \ref{prop: conjugation matrix model}.
		
		Fix the word $i_1$, \dots, $i_k$.  Let $C_j^{(n)} = p_j(\mathbf{B}_{i_j}^{(n)})$.  Observe that almost surely $\limsup_{n \to \infty} \norm{C_j^{(n)}} < \infty$.  Moreover, since $\limsup_{n \to \infty} \norm{B_{j,k}^{(n)}} < \infty$, we see that
		\[
		\limsup_{n \to \infty} \norm{[C_j^{(n)}, U_{i_j}^{(n)}]}_2 = \limsup_{n \to \infty} \norm{[p_j(\mathbf{B}_{i_j}^{(n)}), U_{i_j}^{(n)}]}_2 = 0,
		\]
		which follows from checking the case when $p_j$ is a monomial, which in turn follows from the triangle inequality and non-commutative H\"older inequality.
		
		Let $D_j^{(n)} = (V_{i_j}^{(n)})^* C_j^{(n)} V_{i_j}^{(n)}$, and note that almost surely $\norm{[D_j^{(n)},A^{(n)}]}_2 \to 0$.  Fix $m \in \bN$.  By Lemma \ref{lem: diagonal commutant}, there is a matrix $D_{j,\epsilon}^{(n)} \in \cD_{1/m}^{(n)}$ with
		\[
		\norm{D_{j,\epsilon}^{(n)}} \leq 3 \norm{D_j^{(n)}}, \qquad \norm{D_{j,\epsilon}^{(n)} - D_j^{(n)}}_2 \leq m \norm{[D_j^{(n)},A^{(n)}]}_2,
		\]
		and it is obvious from the proof of that lemma that $D_{j,\epsilon}^{(n)}$ is a measurable function on the probability space.  Recall that \eqref{eq: application of uniform asymptotic freeness} holds almost surely by Lemma \ref{lem: uniform asymptotic freeness} and \ref{lem: diagonal covering}.  Hence, almost surely
		\begin{multline*}
			\limsup_{n \to \infty} \left|\tr_n \left[V_{i_1}^{(n)}(D_{1,\epsilon}^{(n)} - \tr_n(D_{1,\epsilon}^{(n)}))(V_{i_1}^{(n)})^* \dots V_{i_k}^{(n)}(D_{k,\epsilon}^{(n)} - \tr_n(D_{k,\epsilon}^{(n)}))(V_{i_k}^{(n)})^* \right] \right| \\
			\leq 4k/m + 2k \sqrt{24k\log (3Rm)/m}.
		\end{multline*}
		Since $\norm{D_{j,\epsilon}^{(n)} - D_j^{(n)}}_2 \to 0$ almost surely and also $\limsup_{n \to \infty} \norm{D_{j,\epsilon}^{(n)}} < \infty$ almost surely, we obtain also that
		\begin{multline*}
			\limsup_{n \to \infty} \left|\tr_n \left[V_{i_1}^{(n)}(D_1^{(n)} - \tr_n(D_1^{(n)}))(V_{i_1}^{(n)})^* \dots V_{i_k}^{(n)}(D_k^{(n)} - \tr_n(D_k^{(n)}))(V_{i_k}^{(n)})^* \right] \right| \\
			\leq 4k/m + 2k \sqrt{24k\log (3Rm)/m}.
		\end{multline*}
		Then since $m$ was arbitrary and since $p_j(\mathbf{B}_{i_j}^{(n)}) = C_j^{(n)} = V_{i_j}^{(n)} D_j^{(n)} (V_{i_j}^{(n)})^*$, we obtain \eqref{eq: asymptotic free independence}.
	\end{proof}

	\section{Tools for the general approach} \label{sec: tools}
	
	Toward the proof of Theorems \ref{thm: freeness of Pinsker} and \ref{thm: free products}, we recall some results about model theory of operator algebras, as well as the version of Jung-Hayes $1$-bounded entropy \cite{Jung2007,Hayes2018} for types developed in \cite{JekelCoveringEntropy}.  The reason that we use full types in the proof of our main theorem and not just existential types (which would correspond to the entropy in the presence of Hayes) is explained in Remark \ref{rem: need Hausdorff}.
	
	\subsection{Model theory background}
	
	In the proof of Theorem \ref{thm: freeness of Pinsker} and Theorem \ref{thm: free products}, we will use several concepts from model theory of tracial von Neumann algebras, in particular \emph{formulas}, \emph{definable predicates}, \emph{types}, \emph{elementary submodels}, and \emph{countable saturation}.  We explain below the minimal background for these concepts in tracial von Neumann algebras.  For more general background on model theory for metric structures and tracial von Neumann algebras in particular, see \cite{BYBHU2008} \cite{Hart2023} \cite{FHS2014a}, \cite{GH2023}, \cite{Goldbring2023spectralgap}, \cite[\S 2-3]{JekelCoveringEntropy}, \cite[\S 2]{JekelDefinableClosure}.
	
	\textbf{Model theory of metric structures:} In model theory of metric structures \cite{BYBHU2008}, a certain category of objects is formalized through a \emph{language} $\cL$ which describes the operations (functions from $\cM^n$ to $\cM$ such as addition or multiplication) and predicates (functions from $\cM$ to $\bR$ such as the trace or the distance), which can then be used to state axioms for the structures of interest.  To state such axioms, one first defines \emph{formulas} as expressions in formal variables $x_1$, $x_2$, \dots built from the operations and predicates in the language together with \emph{connectives} and \emph{quantifiers}, as explained in more detail below.  Formulas with no free variables are called \emph{sentences}, and a collection of sentences is called a \emph{theory}.
	
	Farah, Hart, and Sherman \cite{FHS2014b} described the language $\cL_{\tr}$ for tracial von Neumann algebras, and a certain theory $\mathrm{T}_{\tr}$ that encodes the axioms of a tracial von Neumann algebra.  In general, \emph{$\cL$-structures} are metric spaces equipped with functions corresponding to the operations and predicates in $\cL$ (but which do not \emph{a priori} satisfy any particular list of axioms, or theory).  Thus, $\cL_{\tr}$-structures have formal operations of addition, multiplication, $*$, and trace, but do not necessarily satisfy the $*$-algebra axioms.  The $\cL_{\tr}$-structures that are actually tracial von Neumann algebras are precisely those which satisfy $\mathrm{T}_{\tr}$.
	
	\textbf{Formulas:} In continuous logic, the connectives are given by continuous functions.  The quantifiers are supremum and infimum over appropriate sets called \emph{sorts} or \emph{domains}; the domains for $\cL_{\tr}$ are the operator norm balls $B_r^{\cM}$.  $\cL_{\tr}$-formulas in variables $\mathbf{x} = (x_j)_{j \in \bN}$ are formal expressions built up recursively as follows:
	\begin{itemize}
		\item \emph{Basic formulas:} A \emph{basic formula} is an expression of the form $\re \tr(t(\mathbf{x}))$ or $\norm{t(\mathbf{x})}_2$, where $t$ is an expression formed through addition, multiplication, and $*$-operations (in practice, when evaluated on a von Neumann algebra, $t$ reduces to a $*$-polynomial \footnote{The definition of formula necessarily precedes writing axioms for von Neumann algebras. 
			Thus, in this definition we don't assume the $*$-algebra axioms, and hence, for instance, $x_1(x_2 + x_3)$ and $x_1x_2 + x_1 x_3$ are distinct expressions.}).
		\item \emph{Connectives:} If $\phi_1(\mathbf{x})$, \dots, $\phi_k(\mathbf{x})$ are formulas, and $F: \bR^k \to \bR$ is a continuous function or \emph{connective}, then $F(\phi_1(\mathbf{x}),\dots,\phi_k(\mathbf{x}))$ is a formula.
		\item \emph{Quantifiers:}  Let $\phi(\mathbf{x},y)$ be a formula in variables $\mathbf{x}$ and another variable $y$.  Fix $r > 0$ and recall $B_r$ denotes the operator-norm ball of radius $r$.  Then
		\[
		\sup_{y \in B_r} \phi(\mathbf{x},y) \text{ and } \inf_{y \in B_r} \phi(\mathbf{x},y)
		\]
		are formulas.
	\end{itemize}
	Given a formula $\phi$, a tracial von Neumann algebra $\cM$, and $x_1$, $x_2, \dots \in \cM$, we can evaluate $\phi^{\cM}(\mathbf{x})$ by substituting the actual elements $x_j$ instead of the formal variables, and evaluating each $\sup_{y \in B_r}$ or $\inf_{y \in B_r}$ symbol in the formula as the supremum or infimum over the ball $B_r^{\cM}$ in $\cM$.  The mapping $\phi^{\cM}: \cM^{\bN} \to \bR$ is called the \emph{interpretation} of the formula.
	
	An example of a formula for tracial von Neumann algebras is
	\[
	\phi(x) = \sup_{y \in B_1} \tr((xy - yx)^*(xy - yx)).
	\]
	For a tracial von Neumann algebra $\cM$ and $x \in \cM$, we have $\phi^{\cM}(x) = 0$ if and only if $x$ is in the center of $\cM$.
	
	\textbf{Definable predicates:} The set of formulas $\mathcal{F}$ forms an algebra over $\bR$ because the formulas can be added and multiplied (the addition and multiplication functions $\bR^2 \to \bR$ count as connectives).  From a functional-analytic point of view, it is natural to complete the space of formulas into a Banach algebra (or something similar); see \cite[\S 7.2]{Hart2023}.   In the case of $\cL_{\tr}$, we can obtain a ``Fr\'echet algebra'' by taking the completion with respect to uniform convergence on each operator norm ball.  More precisely, for a formula $\phi$ in variables $\mathbf{x} = (x_j)_{j \in \bN}$, for $\mathbf{r} = (r_j)_{j \in \bN}$, let
	\[
	\norm{\phi}_{\mathbf{r}} = \sup \left\{ |\phi^{\cM}(\mathbf{x})|: \cM \text{ tracial von Neumann algebra, } \mathbf{x} \in \prod_{j \in \bN} B_{r_j}^{\cM} \right\}.
	\]
	This defines a collection of seminorms, and the elements of the completion are called \emph{definable predicates relative to $\mathrm{T}_{\tr}$} (the notation here mentions $\mathrm{T}_{\tr}$ because we took the supremum only over $\cM$ which satisfy $\mathrm{T}_{\tr}$ rather than all $\cL_{\tr}$-structures).  We will later use the fact that definable predicates satisfy a certain uniform continuity property with respect to $2$-norm.
	
	\begin{lem}[{See \cite[Observation 3.11]{JekelCoveringEntropy}}] \label{lem: uniform continuity}
		Let $\phi$ be a definable predicate over $\mathcal{L}_{\tr}$ relative to $\mathrm{T}_{\tr}$ in countably many variables.  Let $\mathbf{r} \in (0,\infty)^{\bN}$ and $\epsilon > 0$.  Then there exists a finite $F \subseteq \bN$ and $\delta > 0$ such that, for every $\mathcal{M} \models \mathrm{T}_{\tr}$ and $\mathbf{x}, \mathbf{y} \in \prod_{j \in \bN} B_{r_j}^{\cM}$,
		\[
		d^{\cM}(x_j,y_j) < \delta \text{ for all } j \in F \implies |\phi^{\cM}(\mathbf{x}) - \phi^{\cM}(\mathbf{y})| < \epsilon.
		\]
	\end{lem}

	\textbf{Types:} Naturally, we want to identify the algebra of definable predicates with the algebra of continuous functions on its Gelfand spectrum.  This Gelfand spectrum is precisely the space of \emph{types}.  For each tuple $\mathbf{x}$, the \emph{type} of $\mathbf{x}$ is the mapping $\tp^{\cM}(\mathbf{x}): \phi \mapsto \phi^{\cM}(\mathbf{x})$ from the algebra of formulas (and more generally definable predicates) to the real numbers.  We denote the set of types of countable tuples in tracial von Neumann algebras by $\mathbb{S}(\mathrm{T}_{\tr})$.  For $\mathbf{r} \in (0,\infty)^{\bN}$, the set of types of tuples $\mathbf{x}$ with $x_j \in B_{r_j}$ will be denoted $\mathbb{S}_{\mathbf{r}}(\mathrm{T}_{\tr})$.  We equip $\mathbb{S}_{\mathbf{r}}(\mathrm{T}_{\tr})$ with the weak-$*$ topology (also known as the logic topology), and we equip $\mathbb{S}(\mathrm{T})$ with the inductive limit topology obtained by viewing it as the union of the spaces $\mathbb{S}_{\mathbf{r}}(\mathrm{T}_{\tr})$ (see \cite[\S 3.1]{JekelCoveringEntropy}).  With these definitions in hand, the algebra of definable predicates is isomorphic to $C(\mathbb{S}(\mathrm{T}_{\tr}))$ and the norm $\norm{\cdot}_{\mathbf{r}}$ given above coincides with the $C(\mathbb{S}_{\mathbf{r}}(\mathrm{T}_{\tr}))$ norm.
	
	The space $\mathbb{S}_{\mathbf{r}}(\mathrm{T}_{\tr})$ is compact Hausdorff and metrizable \cite[Observation 3.14]{JekelCoveringEntropy}.  In particular, for any point $\mu \in \mathbb{S}_{\mathbf{r}}(\mathrm{T}_{\tr})$, there is a nonnegative continuous function $f$ that vanishes only at $\mu$.  By \cite[Lemma 2.16]{JekelDefinableClosure}, $f$ automatically extends to a continuous function on $\mathbb{S}(\mathrm{T}_{\tr})$, that is, a definable predicate.  We record this result for later use.
	
	\begin{lem} \label{lem: isolate type}
		Let $\mu \in \mathbb{S}_{\mathbf{r}}(\mathrm{T}_{\tr})$ be a type.  Then there exists a nonnegative definable predicate $\phi$ such that whenever $\cM$ is a tracial von Neumann algebra and $\mathbf{x} \in \prod_{j \in \bN} B_{r_j}^{\cM}$, we have
		\[
		\phi^{\cM}(\mathbf{x}) = 0 \iff \tp^{\cM}(\mathbf{x}) = \mu.
		\]
	\end{lem}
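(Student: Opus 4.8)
The plan is to make rigorous the remark in the paragraph immediately preceding the statement: in a compact metrizable space every point is the zero set of a continuous function, and continuous functions on the type space are exactly the (restrictions of) definable predicates. First I would use that $\mathbb{S}_{\mathbf{r}}(\mathrm{T}_{\tr})$ is compact, Hausdorff, and metrizable by \cite[Observation 3.14]{JekelCoveringEntropy}; fixing a compatible metric $\rho$ and setting $f(\nu) = \rho(\nu,\mu)$ gives a nonnegative $f \in C(\mathbb{S}_{\mathbf{r}}(\mathrm{T}_{\tr}))$ that vanishes exactly at $\mu$.

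Next I would promote $f$ to a genuine definable predicate. By \cite[Lemma 2.16]{JekelDefinableClosure}, $f$ extends from the compact slice $\mathbb{S}_{\mathbf{r}}(\mathrm{T}_{\tr})$ to a continuous function on the full type space $\mathbb{S}(\mathrm{T}_{\tr})$, i.e.\ to a definable predicate $\widetilde\phi$ relative to $\mathrm{T}_{\tr}$. Under the identification of the algebra of definable predicates with $C(\mathbb{S}(\mathrm{T}_{\tr}))$ recalled above, this means $\widetilde\phi^{\cM}(\mathbf{x}) = f(\tp^{\cM}(\mathbf{x}))$ for every tracial von Neumann algebra $\cM$ and every $\mathbf{x} \in \prod_{j} B_{r_j}^{\cM}$. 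To arrange nonnegativity on all of $\mathbb{S}(\mathrm{T}_{\tr})$, I would set $\phi = \max(\widetilde\phi,0)$: composing with the continuous connective $t \mapsto \max(t,0)$ preserves definability, makes $\phi$ nonnegative, and leaves its values on $\mathbb{S}_{\mathbf{r}}(\mathrm{T}_{\tr})$ unchanged since $f \geq 0$ there. Consequently $\phi^{\cM}(\mathbf{x}) = f(\tp^{\cM}(\mathbf{x}))$ on $\prod_{j} B_{r_j}^{\cM}$, and this is $0$ precisely when $\tp^{\cM}(\mathbf{x}) = \mu$.

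There is essentially no obstacle in this argument; it is a routine combination of the two cited facts. The only point that needs care is keeping track of which type space one works over: ``definable predicate'' must mean a continuous function on the entire space $\mathbb{S}(\mathrm{T}_{\tr})$, not merely on the compact metrizable slice $\mathbb{S}_{\mathbf{r}}(\mathrm{T}_{\tr})$ where $f$ is initially defined, and bridging that gap is exactly what the extension result \cite[Lemma 2.16]{JekelDefinableClosure} supplies.
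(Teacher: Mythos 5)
Your argument is exactly the one the paper records in the paragraph preceding the lemma: metrizability of $\mathbb{S}_{\mathbf{r}}(\mathrm{T}_{\tr})$ gives a nonnegative continuous function vanishing only at $\mu$, and \cite[Lemma 2.16]{JekelDefinableClosure} extends it to a definable predicate on all of $\mathbb{S}(\mathrm{T}_{\tr})$. Your extra step of composing with $t \mapsto \max(t,0)$ is harmless but unnecessary, since the extension lemma already produces a predicate whose restriction to $\mathbb{S}_{\mathbf{r}}(\mathrm{T}_{\tr})$ is $f$ and the lemma's conclusion only quantifies over $\mathbf{x} \in \prod_j B_{r_j}^{\cM}$.
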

	
	
	In the proof of Theorem \ref{thm: freeness of Pinsker}, we will be concerned with the limiting type of a countable tuple of independent Haar random unitary matrices, as well as the limiting types of deterministic matrices.  We first recall {\L}o{\'s}'s theorem in the setting of continuous logic for tracial von Neumann algebras.
	
	\begin{thm}[{See \cite[Theorem 5.4]{BYBHU2008}}] \label{thm: Los}
		Let $(\cM_i)_{i \in I}$ be tracial von Neumann algebras indexed by a set $I$.  Let $\cU$ be a non-principal ultrafilter on $I$.  Let $\mathbf{r} \in (0,\infty)^{\bN}$, and for each $i, j \in I$, let $x_{i,j} \in B_{r_j}^{\cM_i}$.  Let $\mathbf{x}_i = (x_{i,j})_{j \in \bN}$.  Let $\cM = \prod_{i \to \cU} \cM_i$ be the ultraproduct, and let $y_j$ be the equivalence class $[x_{i,j}]_{i \in I}$ in $\cM$, and let $\mathbf{y} = (y_j)_{j \in \bN}$.  Then for every formula $\phi$ in countably many variables, we have
		\[
		\phi^{\cM}(\mathbf{y}) = \lim_{i \to \cU} \phi^{\cM_i}(\mathbf{x}_i).
		\]
		Hence also $\lim_{i \to \cU} \tp^{\cM_i}(\mathbf{x}_i) = \tp^{\cM}(\mathbf{y})$.
	\end{thm}
	
	We will also rely on the fact that the limit of the type of a tuple of Haar random unitary matrices exists almost surely, which follows from concentration of measure.  The proof is the same as \cite[Lemma 4.2]{FJP2023}, so we only give a sketch below.  In the following, we denote the dual pairing of a type $\mu$ with a formula $\phi$ by $\mu[\phi]$.
	
	\begin{lem} \label{lem: Haar type}
		Fix a free ultrafilter $\cU$ on $\bN$.  Let $U_1^{(n)}$, $U_2^{(n)}$, \dots be independent $n \times n$ Haar random unitaries.  For each $\cL_{\tr}$-formula $\phi$ in countably many variables, let
		\[
		\mu_{\operatorname{Haar}}[\phi] = \lim_{n \to \cU} \mathbb{E} \phi^{M_n(\bC)}(U_1^{(n)},U_2^{(n)},\dots).
		\]
		Then almost surely $\lim_{n \to \cU} \phi^{M_n(\bC)}(U_1^{(n)},U_2^{(n)},\dots) = \mu_{\operatorname{Haar}}[\phi]$.  In other words,
		\[
		\tp^{M_n(\bC)}(U_1^{(n)},U_2^{(n)},\dots) \to \mu_{\operatorname{Haar}}
		\]
		weak-$*$ as $n \to \cU$ almost surely.
	\end{lem}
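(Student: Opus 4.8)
The plan is to follow the concentration-of-measure argument of \cite[Lemma 4.2]{FJP2023}: for a fixed formula, the interpretation is a uniformly continuous function of the Haar unitaries, hence by Lemma \ref{lem: concentration} it is sharply concentrated about its mean with summable Gaussian tails, and Borel--Cantelli then forces almost-sure convergence to $\lim_{n\to\cU}\mathbb{E}[\cdot] = \mu_{\operatorname{Haar}}[\phi]$. The first step is a reduction to a single formula. Since all $U_j^{(n)}$ lie in $B_1^{M_n(\bC)}$, the relevant type space is $\mathbb{S}_{\mathbf 1}(\mathrm{T}_{\tr})$ with $\mathbf 1 = (1,1,\dots)$, which is compact metrizable (as recalled above), so $C(\mathbb{S}_{\mathbf 1}(\mathrm{T}_{\tr}))$ is separable; fix a countable family $(\phi_k)_k$ of definable predicates that is uniformly dense in it. Weak-$*$ convergence $\tp^{M_n(\bC)}(U_1^{(n)},U_2^{(n)},\dots) \to \mu_{\operatorname{Haar}}$ along $\cU$ can then be tested on the $\phi_k$ alone, so it suffices to prove, for each fixed $\cL_{\tr}$-formula $\phi$, that almost surely $\lim_{n\to\cU}\phi^{M_n(\bC)}(U_1^{(n)},U_2^{(n)},\dots) = \mu_{\operatorname{Haar}}[\phi]$ (the right-hand side being well-defined since $\mathbb{E}\phi^{M_n(\bC)}(\cdots)$ is a bounded real sequence, hence converges along any ultrafilter).

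Fix such a $\phi$. Being a formula, it involves only finitely many variables $x_1,\dots,x_m$, and its interpretation $f_n := \phi^{M_n(\bC)}$ is bounded and uniformly continuous on $B_1^{M_n(\bC)} \supseteq \mathbb{U}_n^m$ with respect to $\norm{\cdot}_2$, with bound and modulus of continuity depending only on $\phi$, not on $n$ (a standard feature of continuous-logic formulas, since the metric in $\cL_{\tr}$ is the $2$-norm and moduli of uniform continuity of formulas are uniform over all models). Given $\delta > 0$, the inf-convolution $\tilde f_n(\mathbf U) = \inf_{\mathbf W \in \mathbb{U}_n^m}\bigl(f_n(\mathbf W) + L\sum_j \norm{U_j - W_j}_2\bigr)$ is $L$-Lipschitz in $\norm{\cdot}_2$, and for $L = L_\delta$ large enough (independently of $n$, using the modulus of $f_n$) one has $\norm{f_n - \tilde f_n}_\infty < \delta/3$ on $\mathbb{U}_n^m$. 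Consequently the event that $|f_n(\mathbf U^{(n)}) - \mathbb{E}f_n(\mathbf U^{(n)})| > \delta$ is contained in the event that $|\tilde f_n(\mathbf U^{(n)}) - \mathbb{E}\tilde f_n(\mathbf U^{(n)})| > \delta/3$, whose probability Lemma \ref{lem: concentration} bounds by $4e^{-n^2\delta^2/(108 L_\delta^2)}$; this is summable in $n$. By Borel--Cantelli and then intersecting over $\delta \in \{1/\ell : \ell \in \bN\}$, almost surely $f_n(\mathbf U^{(n)}) - \mathbb{E}f_n(\mathbf U^{(n)}) \to 0$, and hence $\lim_{n\to\cU} f_n(\mathbf U^{(n)}) = \lim_{n\to\cU}\mathbb{E}f_n(\mathbf U^{(n)}) = \mu_{\operatorname{Haar}}[\phi]$ almost surely.

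Finally, I would intersect these almost-sure events over the countable family $(\phi_k)_k$. On the resulting full-measure event, the net $\tp^{M_n(\bC)}(U_1^{(n)},U_2^{(n)},\dots)$ has a limit along $\cU$ in the compact space $\mathbb{S}_{\mathbf 1}(\mathrm{T}_{\tr})$ that agrees with $\mu_{\operatorname{Haar}}$ on every $\phi_k$, hence equals $\mu_{\operatorname{Haar}}$ by density and the fact that $C(\mathbb{S}_{\mathbf 1}(\mathrm{T}_{\tr}))$ separates points; in particular $\mu_{\operatorname{Haar}} \in \mathbb{S}_{\mathbf 1}(\mathrm{T}_{\tr})$ is a genuine type and $\tp^{M_n(\bC)}(U_1^{(n)},U_2^{(n)},\dots) \to \mu_{\operatorname{Haar}}$ weak-$*$ as $n \to \cU$ almost surely, as claimed. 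There is no serious obstacle; the only point requiring a little care is the uniform-in-$n$ Lipschitz approximation of $\phi^{M_n(\bC)}$, which is needed because a general formula --- in particular one involving quantifiers --- is only uniformly continuous, not Lipschitz, so Lemma \ref{lem: concentration} cannot be applied to it directly.
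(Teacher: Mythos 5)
Your proof is correct and takes essentially the same approach as the paper's sketch: approximate the interpretation of each formula by a Lipschitz function with $n$-independent constant, apply the Gaussian concentration bound (Lemma \ref{lem: concentration}) with Borel--Cantelli, and intersect over a countable dense family of formulas, using compactness and metrizability of $\mathbb{S}_{\mathbf{1}}(\mathrm{T}_{\tr})$ to pass to the weak-$*$ statement. The only implementation difference is that you Lipschitz-regularize $\phi^{M_n(\bC)}$ directly via inf-convolution, whereas the paper's sketch approximates $\phi$ by a nearby \emph{formula} built with Lipschitz connectives; both variants rest on the same underlying fact, which you correctly identify, that $\phi^{M_n(\bC)}$ has a modulus of uniform continuity independent of $n$.
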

	
	\begin{proof}[Sketch of proof]
		Let $\mathbf{r} = (1,1,\dots)$.  Note that every $\cL_{\tr}$-formula $\phi$ can be approximated in $\norm{\cdot}_{\mathbf{r}}$ by one which is uniformly Lipschitz (for all tracial von Neumann algebras and all inputs); this follows by taking the connectives $F: \bR^k \to \bR$ to be Lipschitz; see \cite[proof of Lemma 4.2]{FJP2023}.  By a ``$3 \epsilon$ argument'' it suffices to check the claim when $\phi$ is Lipschitz.  In this case, using concentration of measure (Lemma \ref{lem: concentration}) and the Borel-Cantelli lemma, we see that almost surely
		\[
		\lim_{n \to \infty} |\phi^{M_n(\bC)}(U_1^{(n)},U_2^{(n)},\dots) - \mathbb{E} \phi^{M_n(\bC)}(U_1^{(n)},U_2^{(n)},\dots)| = 0,
		\]
		which implies the claim of the lemma.
	\end{proof}
	
	\begin{rem}
		Note that $\mu_{\Haar}$ may depend on the choice of ultrafilter $\cU$, since we do not even know whether the matrix ultraproducts for different ultrafilters are elementarily equivalent.  Very little is known at this point about the large $n$ behavior of formulas containing quantifiers on Haar unitaries.  See \cite[\S 5.2]{JekelModelEntropy} and \cite[\S 4]{FJP2023} for related discussion and results.
	\end{rem}
	
	\textbf{Elementary substructures:}  If $\cL$ is a metric language, and $\cM$ and $\cN$ are $\cL$-structures, we say that $\cM$ is an \emph{elementary submodel of $\cN$} if for every formula $\phi$ and for every tuple $\mathbf{x}$ in $\cM$, we have that $\phi^{\cM}(\mathbf{x}) = \phi^{\cN}(\mathbf{x})$.  Equivalently, $\tp^{\cM}(\mathbf{x}) = \tp^{\cN}(\mathbf{x})$.  We will use the following fact, known as the Downward L\"owenheim-Skolem theorem; see \cite[Proposition 7.3]{BYBHU2008}.  We state it for convenience in the particular case of tracial von Neumann algebras.
	
	\begin{prop}[Downward L\"owenheim-Skolem theorem]  \label{prop: DLS}
		Let $\cM$ be a tracial von Neumann algebra, and let $\cA \subseteq \cM$ be a separable von Neumann subalgebra.  Then there exists a separable elementary substructure $\widehat{\cM} \preceq \cM$ that contains $\cA$.
	\end{prop}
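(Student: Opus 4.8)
The plan is to run the usual metric-space Tarski--Vaught argument, arranging the witnessing set to close up to a von Neumann algebra rather than merely a substructure; the one feature special to the operator-algebra setting is that after collecting countably many witnesses we must pass to the von Neumann algebra they generate, and this is harmless precisely because definable predicates are uniformly $\norm{\cdot}_2$-continuous on products of operator norm balls. First I would record that the syntax of $\cL_{\tr}$ is separable: there is a countable family of formulas $(\phi_\ell)_{\ell \in \bN}$, say $\phi_\ell$ in free variables $(x_1,\dots,x_{m_\ell},y)$, such that every $\cL_{\tr}$-formula is approximated, uniformly on every product $\prod_i B_{r_i}$ of operator norm balls, by members of the family (restrict the connectives $F \colon \bR^k \to \bR$ to a countable set that is uniformly dense on compacta, and the quantifier radii to rationals). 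By a $3\epsilon$ argument it then suffices, when checking elementarity, to control the $\phi_\ell$.

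Now the construction. Choose a countable $\norm{\cdot}_2$-dense $\bQ[i]$-$*$-subalgebra $S_0$ of $\cA$. Inductively, given a countable $\bQ[i]$-$*$-subalgebra $S_k \subseteq \cM$, for every $\ell$, every finite tuple $\mathbf{a}$ from $S_k$ of length $m_\ell$, every rational $r > 0$, and every $p \in \bN$, use countable choice to pick $y(\ell,\mathbf{a},r,p) \in B_r^{\cM}$ with
\[
\phi_\ell^{\cM}\bigl(\mathbf{a}, y(\ell,\mathbf{a},r,p)\bigr) \;\leq\; \inf_{y' \in B_r^{\cM}} \phi_\ell^{\cM}(\mathbf{a}, y') + \frac1p ,
\]
and let $S_{k+1}$ be the countable $\bQ[i]$-$*$-subalgebra generated by $S_k$ together with all these witnesses. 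Put $S_\infty = \bigcup_k S_k$ and $\widehat{\cM} = W^*(S_\infty)$. Then $\widehat{\cM}$ is a von Neumann subalgebra of $\cM$, hence a tracial von Neumann algebra with the restricted trace; it contains $\cA$ since $S_0 \subseteq S_\infty$ is $\norm{\cdot}_2$-dense in $\cA$; and it is separable because $S_\infty$ is countable and $\norm{\cdot}_2$-dense in $\widehat{\cM}$.

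It remains to verify the metric Tarski--Vaught test: for every formula $\phi(\mathbf{x},y)$, every finite tuple $\mathbf{b}$ from $\widehat{\cM}$, and every $r>0$,
\[
\inf_{y \in B_r^{\widehat{\cM}}} \phi^{\cM}(\mathbf{b}, y) \;=\; \inf_{y \in B_r^{\cM}} \phi^{\cM}(\mathbf{b}, y),
\]
the inequality ``$\geq$'' being automatic. Given $\epsilon>0$, pick $\ell$ with $\phi$ and $\phi_\ell$ within $\epsilon$ uniformly on the relevant balls; use Kaplansky density to find a tuple $\mathbf{a}$ from some $S_k$ with $\norm{a_i-b_i}_2$ small and $\norm{a_i}$ comparable to $\norm{b_i}$; and pick $p$ with $1/p<\epsilon$. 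By uniform $\norm{\cdot}_2$-continuity of $\phi_\ell^{\cM}$ on that ball, $\inf_{y\in B_r^{\cM}}\phi^{\cM}(\mathbf{b},y)$ and $\inf_{y\in B_r^{\cM}}\phi_\ell^{\cM}(\mathbf{a},y)$ agree up to a constant multiple of $\epsilon$, while the recorded witness $y(\ell,\mathbf{a},r,p)\in S_{k+1}\subseteq\widehat{\cM}$ lies in $B_r^{\widehat{\cM}}$ and keeps $\phi^{\cM}(\mathbf{b},\cdot)$ within a constant multiple of $\epsilon$ of that infimum. Letting $\epsilon\to 0$ gives equality, and the Tarski--Vaught test (see \cite[Proposition 7.3]{BYBHU2008} and its proof) yields $\widehat{\cM}\preceq\cM$. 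The main obstacle is purely organizational: making sure that a single $\omega$-length iteration records witnesses for \emph{all} finite parameter tuples from $\widehat{\cM}$, which is exactly what the uniform $\norm{\cdot}_2$-continuity of definable predicates, together with $\norm{\cdot}_2$-density of $S_\infty$ in $\widehat{\cM}$, secures.
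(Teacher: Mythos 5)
The paper does not actually supply a proof of this proposition; it simply records it as a known fact and cites \cite[Proposition 7.3]{BYBHU2008}. Your write-up is precisely the standard metric Tarski--Vaught chain argument from that reference, correctly adapted to the tracial von Neumann algebra setting: you use separability of the language (countable dense family of formulas, rational quantifier radii), build a countable $\bQ[i]$-$*$-algebra by iteratively adjoining approximate witnesses, close up to $\widehat{\cM} = W^*(S_\infty)$, and then invoke Kaplansky density plus uniform $\norm{\cdot}_2$-continuity of formulas on operator-norm balls to transfer approximate witnesses for tuples from $S_\infty$ to arbitrary tuples from $\widehat{\cM}$. The one step worth being slightly more explicit about is the Kaplansky density application: $S_\infty$ is a $\bQ[i]$-$*$-algebra, not a $C^*$-algebra, so one should pass to the norm-closure (still separable) before citing Kaplansky to ensure the approximating tuple $\mathbf{a}$ lands in the required operator-norm ball. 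With that understood, your argument is correct and matches the cited source.
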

	
	\textbf{Countable saturation:}  Countable saturation of an $\cL$-structure means essentially that sets of formulas in countably many variables that admit approximate solutions must admit exact solutions.
	
	Let $\Phi$ be a set of $\cL_{\tr}$-formulas in countably many variables $(x_j)_{j \in \bN}$ and parameters or constants $(a_j)_{j \in \bN}$ from some tracial von Neumann algebra $\cM$.  Fix some $\mathbf{r} \in (0,\infty)^{\bN}$.
	\begin{itemize}
		\item We say that $\Phi$ is \emph{satisfiable in $\prod_{j \in \bN} B_{r_j}^{\cM}$} if there exists $\mathbf{x} \in \prod_{j \in \bN} B_{r_j}^{\cM}$ such that $\phi^{\cM}(\mathbf{x},\mathbf{a}) = 0$ for all $\phi \in \Phi$.
		\item We say that $\Phi$ is \emph{finitely approximately satisfiable in $\prod_{j \in \bN} B_{r_j}^{\cM}$} if for every $\phi_1$, \dots, $\phi_k$ in $\Phi$ and every $\epsilon$, there exists some $\mathbf{x} \in \prod_{j \in \bN} B_{r_j}^{\cM}$ satisfying $|\phi_i(\mathbf{x},\mathbf{a})| < \epsilon$ for $i = 1$, \dots, $k$.
		\item We say that $\cM$ is \emph{countably saturated} (or $\aleph_1$-saturated) if for every $\mathbf{r}$, and for every set of formulas $\Phi$ in countably many variables and countably many parameters $\mathbf{a}$, if $\Phi$ is finitely approximately satisfiable in $\prod_{j \in \bN} B_{r_j}^{\cM}$, then $\Phi$ is satisfiable in $\prod_{j \in \bN} B_{r_j}^{\cM}$.
	\end{itemize}
	
	Most ultraproducts are countably saturated.  Recall that an ultrafilter $\cU$ on a set $I$ is said to be \emph{countably incomplete} if there is a countable family of sets in $\cU$ with empty intersection.  We recall the following fact:
	
	\begin{lem}[See {\cite[Proposition 4.11]{FHS2014a}, \cite[Proposition 7.6]{BYBHU2008}}] \label{lem: saturation}
		Let $\cM = \prod_{i \to \cU} \cM_i$ for some tracial von Neumann algebras $(\cM_i)_{i \in I}$ where $I$ is an infinite index set and $\cU$ is a countably incomplete ultrafilter on $I$.  Then $\cM$ is countably saturated.
	\end{lem}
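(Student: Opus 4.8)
The plan is to run the classical diagonal argument for saturation of metric ultraproducts, in which countable incompleteness of $\cU$ supplies the exhaustion of the index set $I$ that lets one splice a whole sequence of approximate solutions into a single exact one. First I would reduce to the case where $\Phi$ is countable: the relevant type space $\mathbb{S}_{\mathbf{r}}$ in the given variables over the countably many parameters $\mathbf{a}$ is compact and metrizable (\cite[Observation 3.14]{JekelCoveringEntropy}), so finite approximate satisfiability of $\Phi$ already produces, by compactness, a type $\mu$ with $\mu[\phi]=0$ for all $\phi\in\Phi$, and by metrizability (or Lemma \ref{lem: isolate type}) this $\mu$ is the common zero set of a countable family $\phi_1,\phi_2,\dots$ of formulas; realizing that family suffices. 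So WLOG enumerate the formulas to be satisfied as $\phi_1,\phi_2,\dots$, fix representatives $a_j=(a_j^i)_{i\in I}$ of the parameters (any representative is automatically bounded in operator norm), and use countable incompleteness to fix a decreasing chain $I=I_0\supseteq I_1\supseteq I_2\supseteq\cdots$ of members of $\cU$ with $\bigcap_n I_n=\emptyset$.

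Next, for each $n$ I would apply finite approximate satisfiability to $\phi_1,\dots,\phi_n$ with tolerance $1/n$ to obtain $\mathbf{x}^{(n)}\in\prod_{j\in\bN}B_{r_j}^{\cM}$ with $|\phi_m^{\cM}(\mathbf{x}^{(n)},\mathbf{a})|<1/n$ for all $m\le n$, and choose coordinatewise representatives $x_j^{(n)}=(x_j^{(n),i})_i$ with $\norm{x_j^{(n),i}}\le r_j$ for every $i$; this truncation is possible precisely because the class already has operator norm at most $r_j$. The fundamental transfer property of metric ultraproducts gives $\phi_m^{\cM}(\mathbf{x}^{(n)},\mathbf{a})=\lim_{i\to\cU}\phi_m^{\cM_i}(\mathbf{x}^{(n),i},\mathbf{a}^i)$, so
\[
J_n := \bigl\{\, i\in I : |\phi_m^{\cM_i}(\mathbf{x}^{(n),i},\mathbf{a}^i)| < 1/n \text{ for all } m\le n \,\bigr\} \in \cU ,
\]
and then $A_n := I_n\cap J_1\cap\cdots\cap J_n$ is a decreasing chain in $\cU$ with $A_n\subseteq J_n$ and $\bigcap_n A_n=\emptyset$ (the latter since $A_n\subseteq I_n$).

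Finally, to each $i\in I$ I would attach $n(i):=\sup\{\,n\ge 1 : i\in A_n\,\}$ with the convention $\sup\emptyset=0$; this is finite for every $i$ because the $A_n$ decrease to $\emptyset$, and since the chain is decreasing one has $i\in A_n\iff 1\le n\le n(i)$. Set $y_j^i:=x_j^{(n(i)),i}$ when $n(i)\ge 1$ and $y_j^i:=0$ otherwise, so that $y_j:=(y_j^i)_i\in B_{r_j}^{\cM}$. To see that $\mathbf{y}=(y_j)_{j\in\bN}$ satisfies $\Phi$, fix $\phi_k$ and $\varepsilon>0$ and choose $N\ge\max(k,1/\varepsilon)$: every $i\in A_N$ has $n(i)\ge N\ge k$ and lies in $A_{n(i)}\subseteq J_{n(i)}$, hence $|\phi_k^{\cM_i}(\mathbf{y}^i,\mathbf{a}^i)|=|\phi_k^{\cM_i}(\mathbf{x}^{(n(i)),i},\mathbf{a}^i)|<1/n(i)\le 1/N\le\varepsilon$; since $A_N\in\cU$, the transfer property yields $|\phi_k^{\cM}(\mathbf{y},\mathbf{a})|\le\varepsilon$, and letting $\varepsilon\downarrow 0$ gives $\phi_k^{\cM}(\mathbf{y},\mathbf{a})=0$. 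As $\phi_k$ was arbitrary, $\Phi$ is satisfiable in $\prod_{j\in\bN}B_{r_j}^{\cM}$.

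The argument is essentially bookkeeping; the two facts that actually do the work are the transfer property $\phi^{\cM}(\cdot)=\lim_{i\to\cU}\phi^{\cM_i}(\cdot)$ for $\cL_{\tr}$-formulas — proved by induction on formula complexity, the quantifier step using that every element of $B_r^{\cM}$ has a representative lying coordinatewise in the balls $B_r^{\cM_i}$ — and the same truncation fact, used to pick bounded representatives; both are standard and are exactly what is established in \cite{FHS2014a, BYBHU2008}. The only point one must be careful about, and the closest thing to an obstacle here, is keeping the chain $A_n$ decreasing, which is why one intersects with all of $J_1,\dots,J_n$ rather than merely with $J_n$, so that $n(i)$ is well behaved.
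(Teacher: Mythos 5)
The paper does not supply its own proof of this lemma; it cites it from \cite[Proposition 4.11]{FHS2014a} and \cite[Proposition 7.6]{BYBHU2008}, and your argument correctly reconstructs the standard diagonal/splicing argument used in those references. The preliminary reduction to a countable $\Phi$ via metrizability of the type space (Observation 3.14 of the covering entropy paper, or Lemma~\ref{lem: isolate type}) is a sound way to handle the fact that the paper's definition permits an arbitrary set of formulas, and the remaining bookkeeping with the decreasing chain $A_n = I_n\cap J_1\cap\cdots\cap J_n$, the bounded coordinatewise representatives, and \L{}o\'s's theorem is carried out correctly.
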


	\subsection{$1$-bounded entropy for types}
	
	In free entropy theory, $1$-bounded entropy is a notion of metric entropy for matricial approximations defined by Hayes \cite{Hayes2018} and inspired by the work of Jung \cite{Jung2007}.  Here we describe the version for full types from \cite{JekelCoveringEntropy}.
	
	If $\cO$ is a subset of the type space $\mathbb{S}(\mathrm{T}_{\tr})$ and $\mathbf{r} \in (0,\infty)^{\bN}$, we define
	\[
	\Gamma_{\mathbf{r}}^{(n)}(\cO) = \left\{\mathbf{X} \in \prod_{j \in \bN} D_{r_j}^{M_n(\bC)}: \tp^{M_n(\bC)}(\mathbf{X}) \in \cO \right\}.
	\]
	We view this as a microstate space as in Voiculescu's free entropy theory \cite{VoiculescuFreeEntropy2}. Entropy of types is defined by the exponential growth rates of covering numbers of these spaces $\Gamma_{\mathbf{r}}^{(n)}(\cO)$ up to unitary conjugation.
	
	\begin{defn}[Orbital covering numbers]
		Given $\Omega \subseteq M_n(\bC)^{\bN}$ and a finite $F \subseteq \bN$ and $\epsilon > 0$, we define $N_{F,\epsilon}^{\orb}(\Omega)$ to be the set of $\mathbf{Y} \in M_n(\bC)^{\bN}$ such that there exists a unitary $U$ in $M_n(\bC)$ and $\mathbf{X} \in \Omega$ such that $\norm{Y_i - UX_iU^*}_2 < \epsilon$ for all $i \in F$.  If $\Omega \subseteq N_{F,\epsilon}^{\orb}(\Omega')$, we say that $\Omega'$ \emph{orbitally $(F,\epsilon)$-covers} $\Omega$.  We denote by $K_{F,\epsilon}^{\orb}(\Omega)$ the minimum cardinality of a set $\Omega'$ that orbitally $(F,\epsilon)$-covers $\Omega$.
	\end{defn}
	
	\begin{defn}[{$1$-bounded entropy for types \cite{JekelCoveringEntropy}}]
		Fix a non-principal ultrafilter $\cU$ on $\bN$.  For  $\mu \in \mathbb{S}(\mathrm{T}_{\tr})$ and $F \subseteq I$ finite and $\epsilon > 0$, we define
		\[
		\Ent_{\mathbf{r},F,\epsilon}^{\mathcal{U}}(\mu) = \inf_{\text{open }\cO \ni \mu} \lim_{n \to \cU} \frac{1}{n^2} \log K_{F,\epsilon}^{\orb}(\Gamma_{\mathbf{r}}^{(n)}(\cO)).
		\]
		Then let
		\[
		\Ent_{\mathbf{r}}^{\cU}(\mu) := \sup_{\substack{\text{finite } F \subseteq \bN \\ \epsilon > 0} } \Ent_{\mathbf{r},F,\epsilon}^{\cU}(\mu).
		\]
		and
		\[
		\Ent^{\cU}(\mu) := \sup_{\mathbf{r} \in (0,\infty)^{\bN}} \Ent_{\mathbf{r}}^{\cU}(\mu).
		\]
	\end{defn}
	
	We also remark that the same definitions make sense for types of finite tuples $(x_1,\dots,x_m)$ instead of countable tuples.  In that setting, one does not need to consider the finite subset $F$ since $F$ can be taken to be $\{1,\dots,m\}$.
	
	By \cite[Corollary 4.10]{JekelCoveringEntropy}, if $\mathbf{x}$ and $\mathbf{y}$ are tuples in a tracial von Neumann algebra $\cM$ and $\mathrm{W}^*(\mathbf{x}) = \mathrm{W}^*(\mathbf{y})$, then $\tp^{\cM}(\mathbf{x})$ and $\tp^{\cM}(\mathbf{y})$ have the same entropy.  Hence, for separable $\cA \subseteq \cM$, one can define $\Ent^{\cU}(\cA: \cM)$ as the entropy of any generating tuple for $\cA$.  More generally, if $\cA$ is not necessarily separable, one defines
	\[
	\Ent^{\cU}(\cA:\cM) := \sup \{\Ent^{\cU}(\tp^{\cM}(\mathbf{x})): \mathbf{x} \in \cA^{\bN} \},
	\]
	and by \cite[Observation 4.12]{JekelCoveringEntropy} this agrees with the entropy of any countable (or finite) generating tuple in the case when $\cA$ is separable.  Now if $\cA \subseteq \cB \subseteq \cM$, then
	\begin{equation} \label{eq: monotonicity for type entropy}
		\Ent^{\cU}(\cA:\cM) \leq \Ent^{\cU}(\cB:\cM).
	\end{equation}

	\textbf{Entropy for types versus entropy in the presence:}  The relationship between the entropy for types described above and Hayes' $1$-bounded entropy is as follows (see \cite[\S 5]{JekelCoveringEntropy} for details).  Hayes' $1$-bounded entropy $h^{\cU}(\cA:\cM)$ of $\cA$ in the presence of $\cM$ (with respect to the ultrafilter $\cU$) arises by looking at microstate spaces for the \emph{existential type} of tuples $\mathbf{x}$ rather than the full type.  The existential type describes the evaluation on $\mathbf{x}$ of formulas of the form $\phi(\mathbf{x}) = \inf_{y \in B_r} \psi(\mathbf{x},y)$, where $\psi$ is a quantifier-free formula (i.e.\ a formula with no supremum or infimum in it).  The space of existential types is equipped with a topology that is non-Hausdorff because a basic neighborhood is defined by one-sided upper bounds on a finite family of inf-formulas.  The microstate spaces defined by a neighborhood of the existential type are a special case of microstate spaces defined by neighborhoods of the full type, and hence
	\begin{equation} \label{eq: existential entropy bound}
		\Ent^{\cU}(\cA: \cM) \leq h^{\cU}(\cA: \cM);
	\end{equation}
	see e.g.\ \cite[Lemma 5.13]{JekelCoveringEntropy}.
	
	\textbf{Orbital versus relative entropy:} There are actually two approaches to defining metric entropy--one based on covering numbers up to unitary conjugation, and one based on covering number of microstate spaces relative to a fixed microstate sequence $A^{(n)}$ for a normal element $a$ with diffuse spectrum.  These were shown to be equivalent in \cite[Lemma A.5]{Hayes2018} for the setting of $a$ self-adjoint and for the $1$-bounded entropy in the presence $h$.  We need the analogous result for the entropy for types, and with using a Haar unitary instead of a self-adjoint element for the fixed microstate.  We include a self-contained proof for convenience; the approach here is slightly different since we fix a very specific $A^{(n)}$ and use Lemma \ref{lem: diagonal commutant} rather than using Szarek's covering estimates for Grassmannians \cite{Szarek}.
	
\begin{lem} \label{lem: relative entropy}
	Let $\mu \in \mathbb{S}_{\mathbf{r}}(\mathrm{T}_{\tr})$ be the type of some infinite tuple $(a,x_1,x_2,\dots)$ such that $a$ is a Haar unitary.  Let $A^{(n)} = \diag(1,\zeta_n,\dots,\zeta_n^{n-1})$, where $\zeta_n = e^{2\pi i/n}$.  For a neighborhood $\cO$ of $\mu$, let
	\[
	\Gamma_{\mathbf{r}}^{(n)}(\cO \mid A^{(n)} \rightsquigarrow a) = \{\mathbf{X}^{(n)} \in \prod_{j \in \bN} D_{r_j}^{M_n(\bC)}: \tp^{M_n(\bC)}(A^{(n)},\mathbf{X}^{(n)}) \in \cO \}.
	\]
	Then
	\[
	\Ent_{\mathbf{r}}^{\cU}(\mu) = \sup_{(F,\epsilon)} \inf_{\cO \ni \mu} \lim_{n \to \cU} \frac{1}{n^2} \log K_{F,\epsilon}(\Gamma_{\mathbf{r}}^{(n)}(\cO \mid A^{(n)} \rightsquigarrow a)).
	\]
\end{lem}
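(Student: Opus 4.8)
The plan is to show both inequalities between the orbital entropy $\Ent_{\mathbf{r}}^{\cU}(\mu)$ and the "relative" quantity computed with the fixed microstate sequence $A^{(n)}$ for the Haar unitary $a$. The key technical input is Lemma \ref{lem: diagonal commutant}, which converts approximate commutation with $A^{(n)}$ into approximate membership in the band algebra $\cD_\epsilon^{(n)}$, together with the covering estimate of Lemma \ref{lem: diagonal covering}, and the fact that any matrix microstate $\mathbf{Y}^{(n)}$ whose type approximates that of $(a,\mathbf{x})$ has a diagonalization close to $A^{(n)}$ by Lemma \ref{lem: diagonal distance}.

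\textbf{The inequality $\leq$.} Given a microstate $\mathbf{X}^{(n)} \in \Gamma_{\mathbf{r}}^{(n)}(\cO; A^{(n)} \rightsquigarrow a)$, the tuple $(A^{(n)}, \mathbf{X}^{(n)})$ has type in $\cO$, so it is one of the tuples counted by the orbital covering number $K_{F,\epsilon}^{\orb}$. Conversely, I would argue that any tuple counted orbitally — i.e.\ a general microstate $(B^{(n)}, \mathbf{Y}^{(n)})$ with type in a small enough neighborhood $\cO'$ of $\mu$ — can, after conjugating by a unitary, be brought close in $\|\cdot\|_2$ on the relevant finite set $F$ to one of the form $(A^{(n)}, \mathbf{X}^{(n)})$: first diagonalize $B^{(n)}$ with eigenvalues in increasing argument; since $\tp(B^{(n)})$ approximates the Haar-unitary type, its empirical spectral distribution lies in the neighborhood $\mathcal{O}_k$ of Lemma \ref{lem: diagonal distance}, so the diagonalized $B^{(n)}$ is within $4\pi/k$ of $A^{(n)}$; absorbing this into the radius and shrinking $\cO'$, we get that the orbital covering number of $\Gamma_{\mathbf{r}}^{(n)}(\cO)$ is bounded (up to a constant absorbed in the limit) by the $\epsilon$-covering number of $\Gamma_{\mathbf{r}}^{(n)}(\cO'; A^{(n)} \rightsquigarrow a)$. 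Taking $\inf_{\cO}$ and $\sup_{(F,\epsilon)}$ gives $\Ent_{\mathbf{r}}^{\cU}(\mu) \le \sup_{(F,\epsilon)} \inf_{\cO} \lim_{n\to\cU} \tfrac1{n^2}\log K_\epsilon(\Gamma_{\mathbf{r}}^{(n)}(\cO; A^{(n)} \rightsquigarrow a))$.

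\textbf{The inequality $\geq$.} For the reverse, one uses that the fixed-microstate space is a \emph{subset} of the full microstate space (just forget that the first coordinate equals $A^{(n)}$ exactly), but with the subtlety that here we bound an ordinary $\epsilon$-covering number by an \emph{orbital} one: given an orbital cover $\{\mathbf{Z}^{(i)}\}$ of $\Gamma_{\mathbf{r}}^{(n)}(\cO)$ by $K_{F\cup\{0\},\epsilon}^{\orb}$ elements, each $\mathbf{X}^{(n)} \in \Gamma_{\mathbf{r}}^{(n)}(\cO; A^{(n)} \rightsquigarrow a)$ satisfies $\|U X_i^{(n)} U^* - Z_i^{(\ell)}\|_2 < \epsilon$ and $\|U A^{(n)} U^* - Z_0^{(\ell)}\|_2 < \epsilon$ for some unitary $U$; since $U A^{(n)} U^*$ commutes with $A^{(n)}$'s orbit, I apply Lemma \ref{lem: diagonal commutant} to replace $U A^{(n)} U^*$ by a matrix in $\cD_{\epsilon'}^{(n)}$, whose covering number is controlled by Lemma \ref{lem: diagonal covering} — this extra factor contributes only $2\epsilon'\log(3R/\epsilon')$ to the exponential rate, which vanishes as $\epsilon' \to 0$. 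Conjugating everything by $U^*$ and using that the set of unitaries conjugating one small band matrix to another has small covering number, I produce from the orbital cover an honest $\epsilon$-cover of $\Gamma_{\mathbf{r}}^{(n)}(\cO'; A^{(n)} \rightsquigarrow a)$ of comparable cardinality. Passing to the limit and the infima/suprema yields the reverse inequality.

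\textbf{Main obstacle.} The delicate point is the direction $\geq$: one must carefully track that conjugating a fixed microstate $(A^{(n)}, \mathbf{X}^{(n)})$ by the unitary $U$ coming from the orbital cover produces a tuple whose first coordinate is no longer $A^{(n)}$ but only close to an element of $\cD_{\epsilon'}^{(n)}$, and then count how many unitaries $W$ satisfy $W A^{(n)} W^* \in \cD_{\epsilon'}^{(n)}$ up to $\epsilon$ — equivalently, bounding the covering number of the relevant "band-diagonalizing" unitaries. This is exactly where Lemmas \ref{lem: diagonal commutant} and \ref{lem: diagonal covering} do the work that Szarek's Grassmannian estimates play in \cite[Lemma A.5]{Hayes2018}, and the content of the lemma is that the $2\epsilon'\log(3R/\epsilon')$ correction is negligible in the $\sup_{(F,\epsilon)}\inf_{\cO}$ limit.
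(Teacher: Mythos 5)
Your overall plan is sound and in the same spirit as the paper: both directions are handled by comparing orbital and relative covers, with Lemma \ref{lem: diagonal distance} doing the work for $\leq$ and the band-matrix machinery (Lemmas \ref{lem: diagonal commutant} and \ref{lem: diagonal covering}) doing the work for $\geq$. The $\leq$ direction as you sketch it is essentially correct and matches the paper's argument, where the authors formalize your ``shrinking $\cO'$'' step by writing $\cO'$ as the sublevel set of a definable predicate built from an infimum over unitaries $b$ with controlled spectral distribution.

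However, there is a genuine confusion in your $\geq$ direction, concentrated in two places. First, the claim that ``since $U A^{(n)} U^*$ commutes with $A^{(n)}$'s orbit, I apply Lemma \ref{lem: diagonal commutant} to replace $U A^{(n)} U^*$ by a matrix in $\cD_{\epsilon'}^{(n)}$'' does not work: $U A^{(n)} U^*$ is an arbitrary conjugate of $A^{(n)}$, it does \emph{not} approximately commute with $A^{(n)}$, and it is \emph{not} close to a band matrix. Your ``Main obstacle'' paragraph has the same issue when it proposes to ``count how many unitaries $W$ satisfy $W A^{(n)} W^* \in \cD_{\epsilon'}^{(n)}$'' --- that set is huge (indeed $\cD_{\epsilon'}^{(n)}$ contains far more than conjugates of $A^{(n)}$ near the diagonal), so the count gives nothing useful. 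The object that actually lies near $\cD_{\epsilon'}^{(n)}$ is the \emph{relative} unitary: if $(A^{(n)},X_1)$ and $(A^{(n)},X_2)$ both lie in the orbital ball around a fixed $(B,\mathbf{X})$ in the cover, with conjugating unitaries $U_1, U_2$ satisfying $\|U_i A^{(n)} U_i^* - B\|_2 < \delta$, then $V = U_2^* U_1$ satisfies $\|V A^{(n)} V^* - A^{(n)}\|_2 < 2\delta$, i.e., $\|[V,A^{(n)}]\|_2$ is small. It is \emph{this} $V$ to which Lemma \ref{lem: diagonal commutant} applies, showing $V$ is within $O(\sqrt{\delta})$ of $\cD_{\sqrt{\delta}}^{(n)} \cap B_3^{M_n(\bC)}$, whose covering number Lemma \ref{lem: diagonal covering} bounds by $\exp(n^2 \cdot 2\sqrt{\delta}\log(3R/\sqrt{\delta}))$. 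Fixing any $(A^{(n)},X_0)$ in the piece of $\Gamma_{\mathbf{r}}^{(n)}(\cO; A^{(n)}\rightsquigarrow a)$ lying in a given orbital ball, every other point in that piece is within $(F, 2\delta)$ of $V(A^{(n)},X_0)V^*$ for some such $V$, and since $V \mapsto V(X_0)_jV^*$ is $2r_j$-Lipschitz, the covering of the $V$'s transfers to a covering of the piece, with the band covering number as the multiplicative overhead that vanishes as $\delta \to 0$. So the idea of using Lemmas \ref{lem: diagonal commutant} and \ref{lem: diagonal covering} is right, but you need to apply them to the relative conjugating unitaries $V$, not to conjugates of $A^{(n)}$.
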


\begin{proof}
	First, let us show $\leq$.  Given $(F,\epsilon)$ and given a neighborhood $\cO$ of $\mu$, we claim that there is a neighborhood $\cO'$ such that $\Gamma_{(1,\mathbf{r})}(\cO')$ is contained in the orbital $(F,\epsilon/2)$-neighborhood of $\{A^{(n)}\} \times \Gamma_{\mathbf{r}}(\cO \mid A^{(n)} \rightsquigarrow a)$.
	
	By Lemma \ref{lem: isolate type}, there exists a definable predicate $\phi$ such that, when $\cM$ is a tracial von Neumann algebra and $\mathbf{y} \in \prod_{j \in \bN} B_{r_j}^{\cM}$, then $\phi^{\cM}(b,\mathbf{y}) = 0$ if and only if $\tp^{\cM}(b,\mathbf{y}) = \mu$.  Note that there exists $m \in \bN$ such that if $\phi^{\cM}(b,\mathbf{y}) \leq 1/m$, then $\tp^{\cM}(\mathbf{y}) \in \cO$.  Indeed, let $\mathcal{K}_m$ be the set of types in $\mathbb{S}_{\mathbf{r}}(\mathrm{T}_{\tr}) \setminus \cO$ that satisfy $\varphi \leq 1/m$, which is a closed subset of $\mathbb{S}_{\mathbf{r}}(\mathrm{T}_{\tr})$.  If $\mathcal{K}_m$ were nonempty for all $m$, then by compactness $\bigcap_{m \in \bN} \mathcal{K}_m$ would be nonempty, which would yield some $\cM$ and $(b,\mathbf{y})$ such that $\phi^{\cM}(b,\mathbf{y}) = 0$ and $\tp^{\cM}(b,\mathbf{y}) \not \in \cO$, which contradicts the choice of $\phi$.  Hence, we have that $\phi^{\cM}(b,\mathbf{y}) \leq 1/m$ implies $\tp^{\cM}(b,\mathbf{y}) \in \cO$.
	
	Using uniform continuity (Lemma \ref{lem: uniform continuity}), there exists $\delta > 0$ such that for all tracial von Neumann algebras $\cM$ and for $a, b \in B_{r_1}^{\cM}$ and $x_j \in B_{r_j}^{\cM}$, $j \geq 2$, we have
	\[
	\norm{a - b}_2 < \delta \implies
	|\phi^{\cM}(a,\mathbf{x}) - \phi^{\cM}(b,\mathbf{x})| < \frac{1}{2m}.
	\]
	Let $k$ be such that $4 \pi /k < \min(\delta,\epsilon/2)$, and let $\mathcal{O}_k$ be the neighborhood of the Haar measure on the unit circle described in Lemma \ref{lem: diagonal distance}; then since the weak-$*$ topology on $\mathcal{P}(S^1)$ is given by testing against trigonometric polynomials, there exist $*$-polynomials $p_1$, \dots, $p_\ell$ such that
	\begin{itemize}
		\item $\re \tr(p_j(a)) < 0$ for all $j$;
		\item for any unitary $u$, if $\re \tr(p_j(u)) < 0$, for all $j$, then the spectral distribution of $u$ is in $\cO_k$.
	\end{itemize}
	
	Now let $\cO'$ be the set of types $\tp^{\cM}(b,\mathbf{y})$ satisfying
	\[
	\phi^{\cM}(b,\mathbf{y}) < \frac{1}{2m} \text{ and } \max_{j \in [\ell]} \re \tr(p_j(b)) < 0.
	\]
	Suppose that $(B,\mathbf{Y})$ is an $n \times n$ matrix tuple whose type is in $\mathcal{O}'$.  Then the spectral measure of $B$ is in $\cO_k$, and therefore by Lemma \ref{lem: diagonal distance}, there is a unitary $V$ such that $\norm{V^*BV - A^{(n)}}_2 < 4 \pi / k < \min(\delta,\epsilon/2)$.  Let $\mathbf{X} = V \mathbf{Y} V^*$.  Since $\norm{A^{(n)} - VBV^*}_2 < \delta$, we have
	\[
	\phi^{M_n(\bC)}(A^{(n)},\mathbf{X}) < \phi^{M_n(\bC)}(VBV^*,\mathbf{X}) + \frac{1}{2m} = \phi^{M_n}(B,\mathbf{Y}) + \frac{1}{2m} < \frac{1}{m},
	\]
	where we used the unitary invariance of the formula $\phi$ and the assumption that $\tp^{M_n(\bC)}(B,\mathbf{Y}) \in \cO'$.  Therefore, $\tp^{M_n(\bC)}(A^{(n)},\mathbf{X}) \in \cO$, and $(B,\mathbf{Y})$ is in the orbital $\epsilon/2$-neighborhood of $(A^{(n)},\mathbf{X})$.  Thus, $\Gamma_{\mathbf{r}}^{(n)}(\cO)$ is in the orbital $(F,\epsilon/2)$ neighborhood of $\{A^{(n)}\} \times \Gamma_{\mathbf{r}}^{(n)}(\cO \mid A^{(n)} \rightsquigarrow a)$.  Hence,
	\[
	K_{F,\epsilon}^{\orb}(\Gamma_{\mathbf{r}}^{(n)}(\cO')) \leq K_{F,\epsilon/2}(\Gamma_{\mathbf{r}}^{(n)}(\cO \mid A^{(n)} \rightsquigarrow a)).
	\]
	From this we obtain
	\[
	\Ent_{\mathbf{r},F,\epsilon}^{\cU}(\mu) \leq \inf_{\cO \ni \mu} \lim_{n \to \cU} \frac{1}{n^2} \log K_{F,\epsilon/2}(\Gamma_{\mathbf{r}}^{(n)}(\cO \mid A^{(n)} \rightsquigarrow a)),
	\]
	and then taking the supremum over $(F,\epsilon)$ finishes the inequality $\leq$.
	
	For the other direction, again fix $(F,\epsilon)$ and $\cO$.  We want to bound the $(F,\epsilon)$-covering number of $\Gamma_{\mathbf{r}}^{(n)}(\cO \mid A^{(n)} \rightsquigarrow a)$ in terms of the orbital $(F,\delta)$-covering number of $\Gamma_{\mathbf{r}}^{(n)}(\cO)$ where $\delta \in (0,1)$ is to be chosen later.  Let $\Omega$ be a set that orbitally $(F,\delta)$-covers $\Gamma_{\mathbf{r}}^{(n)}(\cO)$.  In particular, $\{A^{(n)}\} \times \Gamma_{\mathbf{r}}^{(n)}(\cO \mid A^{(n)} \rightsquigarrow a)$ is covered by the orbital neighborhoods $N_{F,\delta}^{\orb}(B,\mathbf{X})$ for $(B,\mathbf{X}) \in \Omega$.  Thus, our goal is to estimate the plain covering number of the sets
	\[
	S_{(B,\mathbf{X})} = [\{A^{(n)}\} \times \Gamma_{\mathbf{r}}^{(n)}(\cO \mid A^{(n)} \rightsquigarrow a)] \cap N_{F,\delta}^{\orb}(B,\mathbf{X}).
	\]
	The idea is that unitaries that we conjugate by must approximately fix $A^{(n)}$ and hence are approximately band matrices.  More precisely, if $(A^{(n)},X_1)$ and $(A^{(n)},X_2)$ are both in $S_{(B,\mathbf{X})}$, then there is a unitary $V$ such that
	\[
	(A^{(n)},X_1) \in N_{F,\delta}(VA^{(n)}V^*,VX_2V^*).
	\]
	In particular, $\norm{[V,A^{(n)}]}_2 < \delta$, and so by Lemma \ref{lem: diagonal commutant},
	\[
	d(V, \mathcal{D}_{\sqrt{\delta}}^{(n)} \cap B_3^{M_n(\bC)}) \leq \frac{8\sqrt{\pi}}{\sqrt{\delta}} 2 \delta = 16 \sqrt{\pi \delta}.
	\]
	Thus, by Lemma \ref{lem: diagonal covering},
	\[
	K_{F,\sqrt{\delta} + 16 \sqrt{\pi \delta}}( \{V \in \mathbb{U}_n: \norm{[V,A^{(n)}]}_2 < \delta \} ) \leq K_{F,\sqrt{\delta}}(\mathcal{D}_{\sqrt{\delta}}^{(n)} \cap B_3^{M_n(\bC))}) \leq \exp(n^2 \cdot 2 \sqrt{\delta} \log(3R/\sqrt{\delta}) ).
	\]
	Hence, fixing some $(A^{(n)},X_0) \in S_{(B,\mathbf{X})}$, we have that
	\[
	S_{(B,\mathbf{X})} \subseteq N_{F,2\delta}( \{(A^{(n)},VX_0V^*):  V \in \mathbb{U}_n: \norm{[V,A^{(n)}]}_2 < \delta \} ).
	\]
	Then since $\norm{(X_0)_j} \leq r_j$, we have that $V \mapsto V(X_0)_j V^*$ is $2r_j$-Lipschitz.  Let $r_F = \max_{j \in F} r_j$. Then any $(F,\sqrt{\delta} + 16 \sqrt{\pi\delta})$-covering of $\{V \in \mathbb{U}_n: \norm{[V,A^{(n)}]}_2 < \delta \}$ yields a $(F,r_F(\sqrt{\delta} + 16 \sqrt{\pi\delta}))$-covering of the conjugation orbit of $(A^{(n)},X_0)$ and so a $(F, 2 \delta + r_F(\sqrt{\delta} + 16 \sqrt{\pi\delta}))$-covering of $S_{(B,\mathbf{X})}$.  Hence, choose $\delta$ small enough that $2 \delta + r_F(\sqrt{\delta} + 16 \sqrt{\pi\delta}) < \epsilon$.  Then we obtain
	\[
	K_{F,\epsilon}(\Gamma_{\mathbf{r}}^{(n)}(\cO \mid A^{(n)} \rightsquigarrow a)) \leq \sum_{(B,\mathbf{X}) \in \Omega} K_{F,\epsilon}(S_{(B,\mathbf{X})}) \leq K_{F,\delta}^{\orb}(\Gamma_{\mathbf{r}}^{(n)}(\cO)) \exp(n^2 \cdot 2 \sqrt{\delta} \log(3R/\sqrt{\delta}) ).
	\]
	Hence,
	\[
	\frac{1}{n^2} \log K_{F,\epsilon}(\Gamma_{\mathbf{r}}^{(n)}(\cO \mid A^{(n)} \rightsquigarrow a)) \leq \frac{1}{n^2} \log K_{F,\delta}^{\orb}(\Gamma_{\mathbf{r}}^{(n)}(\cO)) + 2 \sqrt{\delta} \log(3R/\sqrt{\delta}).
	\]
	Since $\delta$ can be chosen arbitarily small, we get the desired inequality.
\end{proof}
	
	\section{General freeness results} \label{sec: general version}
	
	We are now ready to prove Theorem \ref{thm: freeness of Pinsker} and \ref{thm: free products}.
	
	\subsection{Free independence phenomena in matrix ultraproducts}
	
	In this section, we fix a countably incomplete ultrafilter $\cU$ on $\bN$, and write $\cQ = \prod_{n \to \cU} M_n(\bC)$.
	
	\begin{lem} \label{lem: type asymptotic freeness}
		Let $U_1^{(n)}$, $U_2^{(n)}$, \dots be independent Haar unitaries.  Let $k \in \bN$, and let $i_1 \neq i_2 \neq \dots \neq i_k$.  For each $j = 1$, \dots, $k$, let $\nu_j \in \mathbb{S}_{(1,1)}(\mathrm{T}_{\tr})$ be the type of some pair $(u,x)$ in $\cQ$ where $u$ is a Haar unitary and $x$ has trace zero, such that $\Ent^{\cU}(\nu_j) = 0$.  Then almost surely, for all $x_j$ with $\tp^{\cQ}(u_{i_j}(\omega),x_j) = \nu_j$, we have $\tr^{\cQ}(x_1 \dots x_k) = 0$.
	\end{lem}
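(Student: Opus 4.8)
The plan is to deduce this from the uniform asymptotic freeness estimate (Lemma~\ref{lem: uniform asymptotic freeness}), taking the sets $S_j^{(n)}$ there to be relative microstate spaces for the types $\nu_j$ over the fixed diagonal model $A^{(n)}$; the hypothesis $\Ent^{\cU}(\nu_j)=0$ is precisely what forces the covering condition \eqref{eq: covering assumption} to hold. First I would reduce to the diagonal model via Proposition~\ref{prop: conjugation matrix model}: after enlarging the probability space we may assume $U_j^{(n)} = V_j^{(n)}A^{(n)}(V_j^{(n)})^*$ with $A^{(n)} = \diag(1,\zeta_n,\dots,\zeta_n^{n-1})$, where $V_1^{(n)},V_2^{(n)},\dots$ are independent Haar random unitaries and the error tends to $0$ almost surely. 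Writing $a=[A^{(n)}]_n$ and $v_j(\omega)=[V_j^{(n)}(\omega)]_n$, we have $u_j(\omega)=v_j(\omega)\,a\,v_j(\omega)^*$ almost surely. Since conjugation by $v_{i_j}(\omega)$ is a trace-preserving automorphism of $\cQ$, hence preserves types, any $x_j$ with $\tp^{\cQ}(u_{i_j}(\omega),x_j)=\nu_j$ gives, upon setting $y_j:=v_{i_j}(\omega)^*x_j\,v_{i_j}(\omega)$, a pair with $\tp^{\cQ}(a,y_j)=\nu_j$. Thus $\nu_j$ is the type of a pair whose first coordinate is the Haar unitary $a=[A^{(n)}]_n$, so Lemma~\ref{lem: relative entropy} applies to it.

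Next I would convert the entropy hypothesis into a covering bound. Fix $\epsilon,\delta>0$. Since $\Ent^{\cU}_{(1,1)}(\nu_j)\le\Ent^{\cU}(\nu_j)=0$ and the relative microstate spaces $\Gamma^{(n)}_{(1,1)}(\cO;A^{(n)}\rightsquigarrow a)$ are nonempty along $\cU$ (any representative of a realization of $\nu_j$ over $a$ lies in them eventually), Lemma~\ref{lem: relative entropy} yields
\[
\inf_{\cO\ni\nu_j}\ \lim_{n\to\cU}\frac1{n^2}\log K_\epsilon\bigl(\Gamma^{(n)}_{(1,1)}(\cO;A^{(n)}\rightsquigarrow a)\bigr)=0.
\]
Hence there is a weak-$*$ open neighborhood $\cO_j=\cO_j(\epsilon,\delta)$ of $\nu_j$ for which $S_j^{(n)}:=\Gamma^{(n)}_{(1,1)}(\cO_j;A^{(n)}\rightsquigarrow a)\subseteq B_1^{M_n(\bC)}$ satisfies $\lim_{n\to\cU}\tfrac1{n^2}\log K_\epsilon(S_j^{(n)})<\delta$; shrinking $\cO_j$ further I would also arrange $|\tr_n(X)|<\delta$ for all $X\in S_j^{(n)}$, using that $\tr^{\cQ}(x)=0$ belongs to $\nu_j$. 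Now Lemma~\ref{lem: uniform asymptotic freeness}, applied to the word $i_1\neq\dots\neq i_k$, the independent Haar unitaries $V_1^{(n)},V_2^{(n)},\dots$ in place of the $U$'s there, and the sets $S_1^{(n)},\dots,S_k^{(n)}$, gives that almost surely
\[
\lim_{n\to\cU}\ \sup_{X_1\in S_1^{(n)}}\cdots\sup_{X_k\in S_k^{(n)}}\Bigl|\tr_n\bigl[V_{i_1}^{(n)}(X_1-\tr_nX_1)(V_{i_1}^{(n)})^*\cdots V_{i_k}^{(n)}(X_k-\tr_nX_k)(V_{i_k}^{(n)})^*\bigr]\Bigr|\le 4k\epsilon+2k\sqrt{12k\delta}.
\]

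To conclude, fix an outcome $\omega$ in the almost sure event where this holds for every word and for $\epsilon=\delta=1/m$, $m\in\bN$ (countably many conditions), and where $u_j(\omega)=v_j(\omega)\,a\,v_j(\omega)^*$. Given $x_j$ with $\tp^{\cQ}(u_{i_j}(\omega),x_j)=\nu_j$, write $y_j=v_{i_j}(\omega)^*x_j\,v_{i_j}(\omega)=[Y_j^{(n)}]_n$, where we may take $\|Y_j^{(n)}\|\le 1$ (truncating representatives is harmless in $\cQ$). Since types pass to the limit along the ultrafilter, $\tp^{M_n(\bC)}(A^{(n)},Y_j^{(n)})\to\nu_j$ weak-$*$ as $n\to\cU$, so for each $m$ we have $Y_j^{(n)}\in S_j^{(n)}$ for $n$ in a set of $\cU$, and also $\tr_n(Y_j^{(n)})\to\tr^{\cQ}(y_j)=0$. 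Substituting $X_j=Y_j^{(n)}$, recalling that $V_{i_j}^{(n)}Y_j^{(n)}(V_{i_j}^{(n)})^*$ represents $x_j$ and that replacing $Y_j^{(n)}-\tr_n(Y_j^{(n)})$ by $Y_j^{(n)}$ changes the word-trace by $O(\max_j|\tr_n(Y_j^{(n)})|)\to 0$, we get $|\tr^{\cQ}(x_1\cdots x_k)|\le 4k/m+2k\sqrt{12k/m}$; letting $m\to\infty$ finishes the proof.

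The step I expect to be the main obstacle is verifying the covering hypothesis \eqref{eq: covering assumption} uniformly: one must translate the orbital-covering definition of $\Ent^{\cU}(\nu_j)=0$ into a subexponential bound on the \emph{plain} covering number of microstate spaces taken relative to the \emph{fixed} model $A^{(n)}$, which is exactly what Lemma~\ref{lem: relative entropy} provides. Everything else is routine bookkeeping with small errors, trace normalizations, and the almost-sure quantifier over countably many words and scales.
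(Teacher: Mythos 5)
Your proof is essentially the same as the paper's: reduce to the diagonal model $V_j^{(n)}A^{(n)}(V_j^{(n)})^*$ via Proposition~\ref{prop: conjugation matrix model}, use $\Ent^{\cU}(\nu_j)=0$ together with Lemma~\ref{lem: relative entropy} to produce neighborhoods $\cO_j$ whose relative microstate spaces have subexponential plain covering numbers, feed those sets into Lemma~\ref{lem: uniform asymptotic freeness}, intersect over countably many scales, and then for a fixed good outcome conjugate by $v_{i_j}(\omega)$ and invoke \L os's theorem to place the representatives $Y_j^{(n)}$ in $\Gamma^{(n)}(\cO_j;A^{(n)}\rightsquigarrow a)$ along $\cU$. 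The only (harmless) cosmetic differences are that you explicitly observe $\nu_j$ is realizable with first coordinate the Haar unitary $a$, you shrink $\cO_j$ to control $|\tr_n(X)|$ and then separately track the error from replacing $Y_j^{(n)}-\tr_n(Y_j^{(n)})$ by $Y_j^{(n)}$ (the paper absorbs this into the ultralimit since $\tr_n(Y_j^{(n)})\to 0$), and you quantify over all words where the lemma fixes a single one.
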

	
	\begin{proof}
		By Proposition \ref{prop: conjugation matrix model}, let $U_j^{(n)}$ be an independent family of Haar random unitaries and $V_j^{(n)}$ another independent family of Haar random unitaries such that $\norm{U_j^{(n)} - V_j^{(n)} A^{(n)} (V_j^{(n)}})^* \to 0$ almost surely, where $A^{(n)} = \diag(1,\zeta_n,\dots,\zeta_n^{n-1})$.  For each outcome $\omega$, let $v_j(\omega)$ be the corresponding element of the ultraproduct $\cQ$, so that $u_j(\omega) = v_j(\omega) a v_j(\omega)^*$.
		
		Fix $m \in \bN$.  Since $\Ent^{\cU}(\nu_j) = 0$, using Lemma \ref{lem: relative entropy} for the case of a $1$-tuple rather than a countable tuple, there exists some neighborhood $\cO_{j,m}$ of $\nu_j$ such that
		\[
		\lim_{n \to \cU} \frac{1}{n^2} \log K_{1/m}(\Gamma_1^{(n)}(\cO_{j,m} \mid A^{(n)} \rightsquigarrow a)) < \frac{1}{m}.
		\]
		(Here we take $\epsilon = 1/m$ and $F = \{1\}$.)  Then applying Lemma \ref{lem: uniform asymptotic freeness} with $\delta = \epsilon = 1/m$, we see that almost surely 
		\begin{multline*}
			\lim_{n \to \cU}
			\sup_{X_1 \in \Gamma_1^{(n)}(\cO_{1,m} \mid A^{(n)} \rightsquigarrow a)} \dots \sup_{X_k \in \Gamma_1^{(n)}(\cO_{k,m} \mid A^{(n)} \rightsquigarrow a)} \\ \left|\tr_n \left[V_{i_1}^{(n)}(X_1 - \tr_n(X_1))(V_{i_1}^{(n)})^* \dots V_{i_k}^{(n)}(X_k - \tr_n(X_k))(V_{i_k}^{(n)})^* \right] \right| \\
			\leq 4k/m + 2k \sqrt{12k/m}.
		\end{multline*}
		By taking the countable intersection, almost surely this holds for \emph{all} $m \in \bN$.
		
		Now fix an outcome $\omega$ in the almost sure event where the above inequality holds for all $m$.  Suppose that $\tp^{\cQ}(u_{i_j}(\omega),y_j) = \nu_j$.  Write $y_j = v_{i_j}(\omega) x_j v_{i_j}(\omega)^*$, and represent $X_j = [X_j^{(n)}]_{n \in \bN}$ with $\norm{X_j^{(n)}} \leq 1$.  By {\L}o{\'s}'s theorem (Theorem \ref{thm: Los}),
		\[
		\lim_{n \to \cU} \tp^{M_n(\bC)}(A^{(n)}, x_j^{(n)}) = \tp^{\cQ}(a,x_j) = \tp^{\cQ}(u_{i_j}(\omega),y_j) = \nu_j.
		\]
		Thus, there is some $A \in \cU$ such that $\tp^{M_n(\bC)}(A^{(n)},X_j^{(n)}) \in \cO_{m,j}$ for each $j = 1$, \dots, $k$ for each $n \in A$.  Hence,
		\begin{multline*}
			|\tr^{\cQ}(y_1 \dots y_k)| = |\tr^{\cQ}(v_{i_1}(\omega) x_1 v_{i_1}(\omega)^* \dots v_{i_k}(\omega) x_k v_{i_k}(\omega)^*)| \\ = \lim_{n \to \cU} \left|\tr_n \left[V_{i_1}^{(n)}(\omega)(X_1 - \tr_n(X_1))(V_{i_1}^{(n)}(\omega))^* \dots V_{i_k}^{(n)}(\omega)(X_k - \tr_n(X_k))(V_{i_k}^{(n)}(\omega))^* \right] \right| \\ \leq 4k/m + 2k \sqrt{12k/m}.    
		\end{multline*}
		Then since $m$ was arbitrary, we get $\tr^{\cQ}(y_1 \dots y_k) = 0$ as desired.
	\end{proof}
	
	In the proof of Theorem \ref{thm: vanishing entropy freeness} below, we will want to apply the conclusion of Lemma \ref{lem: type asymptotic freeness} simultaneously to \emph{all} $\nu_j$'s satisfying the hypotheses (not just to a countable family of such $\nu_j$'s).  So we will not be able to do this by taking intersections of almost sure events na\"\i vely.  Rather, we proceed by arguing that the conclusion of the lemma is a property of the \emph{type} of $(u_1(\omega),u_2(\omega),\dots)$.  We already know that the type of the Haar unitaries converges almost surely to $\mu_{\Haar}$ (there we only had to test countably many formulas).  We will show that for each $\nu_1$, \dots, $\nu_k$ as in Lemma \ref{lem: type asymptotic freeness}, the conclusion of the lemma can be expressed in terms of formulas in $(u_1,u_2,\dots)$, and so it depends only the type.  Therefore, since the type $\mu_{\Haar}$ can be described only by testing countably many formulas, one can indeed obtain the conclusion of Lemma \ref{lem: type asymptotic freeness} for uncountably many choices of $(\nu_1,\dots,\nu_k)$.
	
	The next lemma follows from Lemma \ref{lem: type asymptotic freeness} from purely model-theoretic considerations, specifically the countable saturation of the matrix ultraproduct $\cQ$.  Compare for instance \cite[proof of Corollary 9.10]{BYBHU2008}.  In the following, a \emph{modulus of continuity} refers to a continuous increasing function $w: [0,\infty) \to [0,\infty)$ such that $w(0) = 0$.
	
	\begin{lem} \label{lem: type implication}
		Let $\mu_{\Haar} \in \mathbb{S}_{(1,1,\dots)}(\operatorname{Th}(\cQ))$ be the almost sure limit of the type of Haar random unitary matrices as $n \to \cU$.  Let $k \in \bN$, and let $i_1 \neq i_2 \neq \dots \neq i_k$.  For each $j = 1$, \dots, $k$, let $\nu_j \in \mathbb{S}_{(1,1)}(\mathrm{T}_{\tr})$ be the type of some pair $(u,x)$ where $u$ is a Haar unitary and $x$ has trace zero, such that $\Ent^{\cU}(\nu_j) = 0$.  Then there exist $2$-variable nonnegative definable predicates $\phi_1$, \dots, $\phi_k$ with $\nu_j[\phi_j] = 0$ for each $j$ and there exists a modulus of continuity $w: [0,\infty) \to [0,\infty)$, such that $\mu_{\Haar}$ annihilates the formula $\psi$ given by
		\begin{equation} \label{eq: implication formula psi}
			\psi^{\cQ}(u_1,u_2,\dots) := \sup_{x_1, \dots, x_k \in B_1^{\cQ}} \left( |\tr^{\cQ}(x_1 \dots x_k)| \dot{-} w(\max_j \phi_j^{\cQ}(u_{i_j},x_j)) \right),
		\end{equation}
		where $a \dot{-} b = \max(a-b,0)$.
	\end{lem}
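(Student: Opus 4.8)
The plan is to upgrade the conclusion of Lemma~\ref{lem: type asymptotic freeness} — an exact vanishing statement quantified over all pairs realizing $\nu_j$ — into a \emph{uniform} bound with an explicit modulus of continuity, using the countable saturation of $\cQ$ (Lemma~\ref{lem: saturation}), and then to observe that this uniform bound is precisely the assertion that a certain definable predicate vanishes at any realization of $\mu_{\Haar}$. First I would invoke Lemma~\ref{lem: isolate type} to produce, for each $j$, a nonnegative definable predicate $\phi_j$ in two variables with $\phi_j^{\cM}(u,x)=0 \iff \tp^{\cM}(u,x)=\nu_j$ for every tracial von Neumann algebra $\cM$ and every $(u,x)\in B_1^{\cM}\times B_1^{\cM}$; in particular $\nu_j[\phi_j]=0$. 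Then, combining Lemma~\ref{lem: Haar type} with Lemma~\ref{lem: type asymptotic freeness} (both hold almost surely for the same family of independent Haar random unitaries), I fix an outcome $\omega$ in the intersection of the two almost sure events and set $u_j:=u_j(\omega)\in\cQ$, so that $\tp^{\cQ}(u_1,u_2,\dots)=\mu_{\Haar}$ (by Lemma~\ref{lem: Haar type} and \L os's theorem) while simultaneously $\tr^{\cQ}(x_1\cdots x_k)=0$ whenever $x_j\in B_1^{\cQ}$ and $\tp^{\cQ}(u_{i_j},x_j)=\nu_j$ for all $j$.

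The crux is to show that the bounded, nondecreasing function
\[
g(t) := \sup\bigl\{\,|\tr^{\cQ}(x_1\cdots x_k)| : x_j\in B_1^{\cQ},\ \phi_j^{\cQ}(u_{i_j},x_j)\le t \text{ for } j=1,\dots,k\,\bigr\}
\]
satisfies $\lim_{t\to 0^+} g(t)=0$. Since $\phi_j^{\cQ}(u_{i_j},x_j)=0$ forces $\tp^{\cQ}(u_{i_j},x_j)=\nu_j$, the property of $\omega$ gives $g(0)=0$. If $\limsup_{t\to 0^+} g(t)>0$, choose $\epsilon_0>0$ and $t_m\downarrow 0$ with witnesses $x_j^{(m)}\in B_1^{\cQ}$ satisfying $\phi_j^{\cQ}(u_{i_j},x_j^{(m)})\le t_m$ and $|\tr^{\cQ}(x_1^{(m)}\cdots x_k^{(m)})|>\epsilon_0$. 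Then the set of conditions in variables $y_1,\dots,y_k\in B_1^{\cQ}$ with parameters $u_{i_1},\dots,u_{i_k}$ given by $\phi_j(u_{i_j},y_j)=0$ for each $j$ together with $\epsilon_0\dot{-}|\tr(y_1\cdots y_k)|=0$ is finitely approximately satisfiable in $\prod_j B_1^{\cQ}$ (the $x_j^{(m)}$ with $t_m$ small witness any prescribed tolerance), so by Lemma~\ref{lem: saturation} it is satisfiable, yielding $y_j\in B_1^{\cQ}$ with $\tp^{\cQ}(u_{i_j},y_j)=\nu_j$ for all $j$ and yet $|\tr^{\cQ}(y_1\cdots y_k)|\ge\epsilon_0$, contradicting the property of $\omega$. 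This overspill step, together with checking that the relevant conditions genuinely form a countable family of formulas with parameters, is the main point, though routine once set up.

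Finally, a standard construction (for instance $w(t):= t + \tfrac{2}{t}\int_t^{2t} g(s)\,ds$ for $t>0$ and $w(0):=0$, which is continuous, strictly increasing, and dominates the nondecreasing $g$) yields a modulus of continuity $w$ with $w\ge g$. Then for all $x_1,\dots,x_k\in B_1^{\cQ}$ we get $|\tr^{\cQ}(x_1\cdots x_k)|\le g\bigl(\max_j\phi_j^{\cQ}(u_{i_j},x_j)\bigr)\le w\bigl(\max_j\phi_j^{\cQ}(u_{i_j},x_j)\bigr)$, so the expression inside the supremum defining $\psi^{\cQ}(u_1,u_2,\dots)$ in \eqref{eq: implication formula psi} is $\le 0$, hence $\psi^{\cQ}(u_1,u_2,\dots)=0$. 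It remains to observe that $\psi$ is a bona fide definable predicate: $|\tr(\cdot)|$ is a continuous connective of the basic formulas $\re\tr(\cdot)$ and $\re\tr(-i\,\cdot)$, $w$, $\max_j$ and $\dot{-}$ are continuous connectives, $\phi_1,\dots,\phi_k$ are definable predicates, and $\sup_{x_j\in B_1}$ is a legitimate quantifier over a domain. Therefore $\psi^{\cQ}(u_1,u_2,\dots)$ equals $\mu_{\Haar}[\psi]$, which is consequently $0$.
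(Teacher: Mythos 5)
Your proof is correct and follows essentially the same route as the paper: use Lemma~\ref{lem: isolate type} to isolate each $\nu_j$ by a nonnegative definable predicate, use countable saturation of $\cQ$ to upgrade the exact vanishing from Lemma~\ref{lem: type asymptotic freeness} into a uniform $\epsilon$--$\delta$ bound, and then package that bound as a modulus of continuity $w$ so that $\psi$ vanishes at any realization of $\mu_{\Haar}$. The only differences are cosmetic additions (an explicit formula for $w$ and an explicit check that $\psi$ is a definable predicate), which the paper leaves implicit.
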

	
	\begin{proof}
		From Lemma \ref{lem: type asymptotic freeness}, we know that if $(u_1,u_2,\dots)$ realizes the type $\mu_{\Haar}$ and if $\tp^{\cQ}(u_{i_j},x_j) = \nu_j$ for $j = 1$, \dots, $k$, then $\tr^{\cQ}(x_1 \dots x_k) = 0$.
		
		By Lemma \ref{lem: isolate type}, fix a $2$-variable definable predicate $\phi_j \geq 0$ such that for $u, x \in B_1^{\cQ}$, we have $\phi_j^{\cQ}(u,x) = 0$ if and only if $\tp^{\cQ}(u,x) = \nu_j$.  We claim that for every $\epsilon > 0$, there exists a $\delta > 0$ such that if $\max_j \phi_j^{\cQ}(u_{i_j},x_j) < \delta$, then $|\tr^{\cQ}(x_1 \dots x_k)| < \epsilon$.  Suppose for contradiction that this fails.  Then there exists some $\epsilon > 0$ such that for all $\delta > 0$, there exist $x_1, \dots, x_k$ with $\phi_j^{\cQ}(u_{i_j},x_j) < \delta$ for each $j$ but $|\tr^{\cQ}(x_1 \dots x_k)| \geq \epsilon$.  Consider the definable predicates $\{ \phi_j(u_{i_j},x_j) \}_{j=1}^k \cup \{ \epsilon \dot{-} |\tr(x_1 \dots x_k)| \}$ in variables $(x_1,\dots,x_k)$ and constants $(u_1,u_2,\dots)$.  Then this set of formulas is approximately satisfiable, and so by countable saturation of $\cQ$ (Lemma \ref{lem: saturation}), it is satisfiable.  That is, there exist $(x_1,\dots,x_k)$ with $\phi_j(u_{i_j},x_j) = 0$ and $|\tr^{\cQ}(x_1 \dots x_k)| \geq \epsilon$.  This contradicts the conclusion of Lemma \ref{lem: type asymptotic freeness} that we stated at the beginning of the proof.
		
		Knowing that for every $\epsilon > 0$, there exists a $\delta > 0$ such that if $\max_j \phi_j^{\cQ}(u_{i_j},x_j) < \delta$, then $|\tr^{\cQ}(x_1 \dots x_k)| < \epsilon$, one can construct a continuous increasing function $w: [0,\infty) \to [0,\infty)$ with $w(0) = 0$ such that $|\tr^{\cQ}(x_1 \dots x_k)| \leq w(\max_j \phi_j^{\cQ}(u_{i_j},x_j))$ for all $x_1,\dots,x_k \in B_1^{\cQ}$.
	\end{proof}
	
	\begin{rem} \label{rem: need Hausdorff}
		Lemma \ref{lem: type implication} is where we rely the fact on working with the full types rather than the existential types.  Since the space of existential types is not Hausdorff, one cannot use continuous functions to separate points in that space.  Hence, it is necessary to reason about a larger space which is compact and Hausdorff.
	\end{rem}
	
	\begin{thm} \label{thm: vanishing entropy freeness}
		Let $(u_1,u_2,\dots)$ be a tuple in $\cQ$ with type equal to $\mu_{\Haar}$ of Lemma \ref{lem: Haar type}.  For each $i \in \bN$, let $\cA_i \ni u_i$ with $\Ent^{\cU}(\cA_i:\cM) = 0$.  Then $\cA_1$, $\cA_2$, \dots are freely independent.
	\end{thm}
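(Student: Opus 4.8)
The plan is to obtain the theorem as a short consequence of Lemma~\ref{lem: type implication} by unwinding the definition of free independence. By that definition, it suffices to fix $k \in \bN$, an alternating word $i_1 \neq i_2 \neq \dots \neq i_k$, and elements $x_j \in \cA_{i_j}$ with $\tr^{\cQ}(x_j) = 0$, and to prove that $\tr^{\cQ}(x_1 \cdots x_k) = 0$. Since this statement is homogeneous in each $x_j$, I may discard the trivial case in which some $x_j = 0$ and then rescale so that $\norm{x_j} \leq 1$ for every $j$; thus each pair $(u_{i_j}, x_j)$ lies in $B_1^{\cQ} \times B_1^{\cQ}$.

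Next I would verify that $\nu_j := \tp^{\cQ}(u_{i_j}, x_j)$ satisfies the hypotheses of Lemma~\ref{lem: type implication}. First, $u_{i_j}$ is a Haar unitary: being a Haar unitary is a condition on the type $\mu_{\Haar} = \tp^{\cQ}(u_1, u_2, \dots)$, and it holds because the matrices $U_i^{(n)}$ are genuinely unitary and satisfy $\mathbb{E}[\tr_n((U_i^{(n)})^m)] = 0$ for all $m \neq 0$. Second, $x_j$ has trace zero by hypothesis. Third, $\Ent^{\cU}(\nu_j) = 0$: on one hand $(u_{i_j},x_j)$ is a pair in $\cA_{i_j}$, so by monotonicity \eqref{eq: monotonicity for type entropy} we get $\Ent^{\cU}(\nu_j) \leq \Ent^{\cU}(\cA_{i_j}:\cQ) = 0$; on the other hand $\nu_j$ is realized in the matrix ultraproduct $\cQ$, so by \L os's theorem every weak-$*$ neighborhood of $\nu_j$ contains matricial microstates along $\cU$, forcing $\Ent^{\cU}(\nu_j) \geq 0$.

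Now I would apply Lemma~\ref{lem: type implication} to the word $i_1, \dots, i_k$ and the types $\nu_1, \dots, \nu_k$. This yields nonnegative $2$-variable definable predicates $\phi_1, \dots, \phi_k$ with $\nu_j[\phi_j] = 0$, a modulus of continuity $w$, and the fact that $\mu_{\Haar}$ annihilates the formula $\psi$ of \eqref{eq: implication formula psi}. Since $(u_1, u_2, \dots)$ realizes $\mu_{\Haar}$, we obtain $\psi^{\cQ}(u_1, u_2, \dots) = 0$, which unwinds to
\[
\abs{\tr^{\cQ}(y_1 \cdots y_k)} \leq w\bigl( \max_j \phi_j^{\cQ}(u_{i_j}, y_j) \bigr) \qquad \text{for all } y_1, \dots, y_k \in B_1^{\cQ}.
\]
Specializing to $y_j = x_j$ and using $\tp^{\cQ}(u_{i_j}, x_j) = \nu_j$, hence $\phi_j^{\cQ}(u_{i_j}, x_j) = \nu_j[\phi_j] = 0$, we conclude $\abs{\tr^{\cQ}(x_1 \cdots x_k)} \leq w(0) = 0$. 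As the word and the $x_j$'s were arbitrary, $\cA_1, \cA_2, \dots$ are freely independent.

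The substantive work is carried entirely by the lemmas invoked here: the concentration/volumetric estimate of Lemma~\ref{lem: uniform asymptotic freeness} combined with the relative-entropy identification of Lemma~\ref{lem: relative entropy} (feeding into Lemma~\ref{lem: type asymptotic freeness}), and the model-theoretic transfer of Lemma~\ref{lem: type implication}, where countable saturation of $\cQ$ turns an almost-sure statement about the random elements $u_j(\omega)$ into a statement about the single type $\mu_{\Haar}$. Within the present argument the only points needing care are the reduction to the unit ball and the verification that $\Ent^{\cU}(\nu_j) = 0$, and I do not anticipate either being an obstacle.
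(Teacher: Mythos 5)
Your proof is correct and follows essentially the same route as the paper's: reduce to an alternating word, observe via monotonicity \eqref{eq: monotonicity for type entropy} that the type $\nu_j = \tp^{\cQ}(u_{i_j},x_j)$ has vanishing $\Ent^{\cU}$, then invoke Lemma~\ref{lem: type implication} and evaluate $\psi$ at $(u_1,u_2,\dots)$ and $\phi_j$ at $(u_{i_j},x_j)$ to conclude $\tr^{\cQ}(x_1\cdots x_k)=0$. You spell out a few details the paper leaves implicit — rescaling the $x_j$ into the unit ball so that $\nu_j\in\mathbb{S}_{(1,1)}(\mathrm{T}_{\tr})$, noting that $u_{i_j}$ is Haar, and observing that $\Ent^{\cU}(\nu_j)\geq 0$ because $\nu_j$ is realized in $\cQ$ — all of which are correct and harmless additions to the same argument.
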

	
	\begin{proof}
		Let $i_1 \neq i_2 \neq \dots \neq i_k$.  Let $x_j \in \cA_{i_j}$ with $\tr^{\cQ}(x_j) = 0$, and we will show that $\tr^{\cQ}(x_1 \dots x_k) = 0$.  Let $\nu_j$ be the type of $(u_j,x_j)$.  Note that by \eqref{eq: monotonicity for type entropy},
		\[
		\Ent^{\cU}(\nu_j) = \Ent^{\cU}(\mathrm{W}^*(u_j,x_j):\cM) \leq \Ent^{\cU}(\cA_j:\cM) = 0.
		\]
		Therefore, by Lemma \ref{lem: type implication}, there exists some modulus of continuity $w$ such that $\mu_{\Haar}$ vanishes on the formula $\psi$ of \eqref{eq: implication formula psi}.  In particular, we have
		\[
		|\tr^{\cQ}(x_1 \dots x_k)| \dot{-} w(\max_j \phi_j^{\cQ}(u_{i_j},x_j)) = 0.
		\]
		Since $\phi_j^{\cQ}(u_{i_j},x_j) = \nu_j[\phi_j] = 0$, we obtain that $\tr^{\cQ}(x_1 \dots x_k) = 0$ as desired.  Since this holds for every alternating string $i_1 \neq \dots \neq i_k$ and all trace-zero $x_j \in \cA_{i_j}$, we have that $\cA_1$, $\cA_2$, \dots are freely independent as desired.
	\end{proof}
	
	\begin{proof}[Proof of Theorem \ref{thm: freeness of Pinsker}]
		Fix an outcome $\omega$ in the almost sure event where the type $(U_1^{(n)},U_2^{(n)},\dots)$ converges to $\mu_{\Haar}$, or in other words the type of $(u_1(\omega), u_2(\omega),\dots)$ is $\mu_{\Haar}$.  Let $\cP_j$ be the Pinsker algebra containing $u_j$.  Since $\Ent^{\cU}(\cP_j:\cQ) \leq h(\cP_j:\cQ) = 0$ by \eqref{eq: existential entropy bound}, the $\cP_j$'s are freely independent by Theorem \ref{thm: vanishing entropy freeness}.
	\end{proof}
	
	\begin{rem}[On the theory of matrix ultraproducts] \label{rem: theory}
		Fix $2$-variable types $\nu_j$ with $\Ent^{\cU}(\nu_j) = 0$, and let $\psi$ be as in Lemma \ref{lem: type implication}.  Note that $\psi$ only depends on the finitely many of $u_i$'s, and since the unitaries form a definable set, the following is a definable predicate with no free variables:
		\[
		\gamma = \inf_{u_1, u_2, \dots \text{ unitary}} \psi(u_1,u_2,\dots).
		\]
		Then the matrix ultraproduct $\cQ$ satisfies the sentence $\gamma^{\cQ} = 0$ since $\psi^{\cQ}$ vanishes on the unitaries with type $\mu_{\Haar}$.  By similar reasoning as in Lemma \ref{lem: type implication}, the free independence of the commutants of the $u_j$'s from Theorem \ref{thm: commutant} leads to the following statement:  Fix a word $i_1 \neq i_2 \neq \dots \neq i_k$. There is some modulus of continuity $w$ such that
		\[
		\inf_{u_1, u_2, \dots \in \cQ \text{ unitary}} \sup_{x_1, \dots, x_k \in B_1^{\cQ}} \left( |\tr^{\cQ}(x_1 \dots x_k) \dot{-} w(\max_j \norm{[u_{i_j},x_j]}_2) \right) = 0.
		\]
		Note that (for an appropriate choice of $w$) this sentence is also satisfied by $L(F_\infty)$ on account of \cite[Theorem B]{houdayer2023asymptotic} (or alternatively Theorem \ref{thm: free products} which we prove in the next section).  Hence, these freeness results give some new information about the $\exists \forall$ theory of $\cQ$ and that of $L(F_\infty)$, and in particular show agreement of the two theories on certain $\exists \forall$-sentences.
	\end{rem}

	
	\subsection{Free independence phenomena in ultraproducts of free products} \label{sec: free product phenomena}
	
	\begin{proof}[Proof of Theorem \ref{thm: free products}]
		First, to prove claims (1), (2), (3), it suffices to consider the case where $I$ is countable.  Indeed, for (1), we have to prove that for every $i_1 \neq i_2 \neq \dots \neq i_k$ if $x_j \in \cA_{i_j}$ with trace zero, then $\tr^{\cM^{\cV}}(x_1 \dots x_k) = 0$.  Let $I_0$ be a countable subset of $I$ containing the indices $i_1$, \dots, $i_k$.  Then view $\cM$ as a free product of countably many algebras $\cM_0 * (*_{i \in I_0} \cM_i)$, where $\cM_0 = *_{i \in I \setminus I_0} \cM_i$.  By applying the conclusions of the theorem for this free product decomposition, we obtain $\tr^{\cM^{\cV}}(x_1 \dots x_k) = 0$.  Hence, in the remainder of the proof assume without loss of generality that $I = \bN$.
		
		Now because $\cM_i$ is assumed to be diffuse and Connes embeddable, there exists an ultrafilter $\cV'$ on some infinite index set\footnote{Since $\cM_i$ is not necessarily separable, the index set for $\cV'$ may be uncountable.} such that $\cM_i$ embeds into $\cR^{\cV'}$ for each $i \in I$, where $\cR$ is the unique separable hyperfinite II$_1$ factor.  Since we assumed that $\cA_i \cap \cM_i$ is diffuse, let $a_i$ be a Haar unitary in $\cM_i$.  We may assume without loss of generality that inclusion $\cM_i \to \cR^{\cV'}$ sends $a_i$ into the diagonal copy of $\cR$ in $\cR^{\cV'}$; this is because all Haar unitaries in $\cR^{\cV'}$ are conjugate to each other.
		
		Let $\cS = *_{i \in I} \cR \cong L(F_\infty)$.  For convenience of notation, we will denote by $\cR_i$ the $i$th copy of $\cR$ in $\cS$.  The inclusions $\cR_i \to \cS$ yields an inclusion $\cR_i^{\cV'}$ into $\cS^{\cV'}$, and the $\cR_i^{\cV'}$'s are freely independent in $\cS^{\cV'}$ by a straightforward limiting argument.  Hence, we have mappings
		\[
		*_{i \in I} \cM_i \to *_{i \in I} \cR_i^{\cV'} \to (*_{i \in I} \cR_i)^{\cV'}.
		\]
		Thus also we have inclusions
		\[
		\cM^{\cV} = (*_{i \in I} \cM_i)^{\cV} \to (*_{i \in I} \cR_i^{\cV'})^{\cV} \to ((*_{i \in I} \cR_i)^{\cV'})^{\cV} \cong (*_{i \in I} \cR_i)^{\cW} = \cS^{\cW},
		\]
		where $\cW$ is a certain ultrafilter on the product of the index sets for $\cV'$ and $\cV$.  We have also arranged that the Haar unitary $a_i$ in $\cM_i \cap \cA_i \subseteq (*_{i \in I} \cM_i)^{\cV}$ is contained in $\cR_i \subseteq \cR_i^{\cW} \subseteq \cS^{\cW}$.
		
		Note that $h(\cM_i: \cS^{\cW}) \leq h(\cR_i^{\cV'}: \cS^{\cW}) = 0$ since $\cR^{\cV'}$ has property Gamma.  Let $\cP_i$ be the Pinsker algebra of $\cM_i$ in $\cS^{\cW}$, that is, the unique maximal subalgebra with $h(\cP_i: \cS^{\cW}) = 0$.  Of course, we have that $\cR_i \subseteq \cP_i$ since $\cR_i \cap \cM_i$ is diffuse.
		
		We claim that the $\cP_i$'s are freely independent (the three claims in the theorem statement will follow from this, as we explain at the end of the proof).  It suffices to show free independence of any separable subalgebras $\cB_i$ inside $\cP_i$ such that $\cR_i \subseteq \cB_i$.  Fix a free ultrafilter $\cU$ on $\bN$ and let $\cQ = \prod_{n \to \cU} M_n(\bC)$.  Let $A^{(n)} = \diag(1,\zeta_n,\dots,\zeta_n^{n-1})$, where $\zeta_n = e^{2\pi i/n}$, and let $a = [A^{(n)}]_{n \in \bN} \in \cQ$.  Fix an embedding $\pi_i: \cR_i \to \cQ$.  Since all Haar unitaries in $\cQ$ are conjugate, assume without loss of generality that $\pi_i(a_i) = a$.  Let $(V_i^{(n)})_{i \in \bN}$ be independent $n \times n$ Haar random unitaries.  By Proposition \ref{prop: conjugation matrix model} and Lemma \ref{lem: Haar type}, we have that almost surely $\tp^{M_n(\bC)}(V_1^{(n)}A^{(n)}(V_1^{(n)})^*,V_2^{(n)} A^{(n)} (V_2^{(n)})^*,\dots)$ converges to the type $\mu_{\Haar}$ of Lemma \ref{lem: Haar type}.  Thus, fix an outcome $\omega$ where this occurs, let $v_i = [V_i^{(n)}(\omega)]_{n \in \bN} \in \cQ$, and let $u_i = v_i a v_i^*$, so that the type of $(u_1,u_2,\dots)$ is $\mu_{\Haar}$.  Note that $v_i \pi_i(\cR_i) v_i^*$ contains $u_i$ and is amenable; therefore, using \eqref{eq: existential entropy bound} and \cite[\S 2.3, Property 4]{PropTS1B}.
		\[
		0 \leq \Ent^{\cU}(v_i \pi_i(\cR_i) v_i^*:\cQ) \leq h^{\cU}(v_i \pi_i(\cR_i)) v_i^*:\cQ) \leq h(v_i \pi_i(\cR_i)) v_i^*:\cQ) \leq 0.
		\]
		Thus, by Theorem \ref{thm: vanishing entropy freeness}, the algebras $v_i \pi_i(\cR_i) v_i^*$ are freely independent.  In particular, there is an embedding $\pi: \cS = *_{i \in I} \cR_i \to \cQ$ such that $\pi|_{\cR_i} = \operatorname{ad}_{v_j} \circ \pi_i$.
		
		By the downward L\"owenheim-Skolem theorem (Proposition \ref{prop: DLS}), choose some separable elementary substructure $\widehat{\cS}$ of $\cS^{\cW}$ containing all the $\cB_i$'s.  We claim that $\pi$ extends to an embedding $\widehat{\pi}: \widehat{\cS} \to \cQ$.  In more detail, let $\mathbf{x}$ and $\mathbf{y}$ be countable tuple generating $\cS$ and $\widehat{\cS}$ respectively, and assume they are elements in the unit ball.  Since $\cS \subseteq \widehat{\cS} \subseteq \cS^{\cW}$, then for every $\epsilon > 0$ and for any finitely many polynomials $p_1$, \dots, $p_\ell$ of countably many variables, there exists some tuple $\mathbf{z}$ from the unit ball in $\cS$ such that
		\[
		|\tr^{\cS}(p_j(\mathbf{x},\mathbf{z})) - \tr^{\cS^{\cW}}(p_j(\mathbf{x},\mathbf{y}))| < \epsilon.
		\]
		In particular, there exists $\mathbf{w}$ in $\cQ$ (namely $\mathbf{w} = \pi(\mathbf{z})$) such that
		\[
		|\tr^{\cQ}(p_j(\pi(\mathbf{x}),\mathbf{w})) - \tr^{\cS^{\cW}}(p_j(\mathbf{x},\mathbf{y}))| < \epsilon.
		\]
		Hence, by countable saturation of $\cQ$ (Lemma \ref{lem: saturation}), there exists $\mathbf{w}$ in $\cQ$ such that
		\[
		\tr^{\cQ}(p(\pi(\mathbf{x}),\mathbf{w})) = \tr^{\cM^{\cV}}(p(\mathbf{x},\mathbf{y}))
		\]
		for all non-commutative polynomials $p$.  Hence, there is a trace-preserving embedding $\widehat{\pi}: \widehat{\cS} \to \cQ$ such that $\widehat{\pi}(\mathbf{x}) = \pi(\mathbf{x})$ and $\widehat{\pi}(\mathbf{y}) = \mathbf{w}$.
		
		Now
		\[
		h(\widehat{\pi}(\cB_i): \cQ) \leq h(\widehat{\pi}(\cB_i): \widehat{\pi}(\widehat{\cS})) = h(\cB_i:\widehat{\cS}) = h(\cB_i:\cS^{\cW}) \leq h(\cP_i:\cS^{\cW}) = 0;
		\]
		here the equality $h(\cB_i:\widehat{\cS}) = h(\cB_i:\cS^{\cW})$ follows because $h(\cB_i:\widehat{\cS})$ only depends on the existential type of generators of $\cB_i$ in $\widehat{\cS}$, and this is the same as its existential type in $\cS^{\cW}$ since $\widehat{\cS}$ is an elementary substructure.  Since $\widehat{\pi}(\cB_i)$ by construction contains $\ad_{v_i} \circ \pi_i(\cR_i)$ and hence contains $v_iav_i^* = u_i$, we can apply Theorem \ref{thm: vanishing entropy freeness} to obtain that the algebras $\pi_i(\cB_i)$ are freely independent in $\cQ$.  This means also that the $\cB_i$'s are freely independent in $\cS^{\cW}$.
		
		Therefore, we have shown that the Pinsker algebras $\cP_i$ are freely independent.  The claims of the theorem now follow quickly from this, together with the properties of $h$:
		\begin{enumerate}
			\item If $\cA_i \subseteq \cM^{\cV} \subseteq \cS^{\cW}$ and $\cA_i \cap \cM_i$ is diffuse and $h(\cA_i: \cM^{\cV}) = 0$, then we obtain using \cite[\S 2.3, Property 3]{PropTS1B} that $h(\cA_i: \cS^{\cW}) \leq h(\cA_i: \cM^{\cV}) = 0$, and therefore $\cA_i$ is contained inside the Pinsker algebra $\cP_i$.  Thus, the $\cA_i$'s are freely independent.
			\item Similar to point (1), it suffices to note that $\cC_i$ is contained in the Pinsker algebra $\cP_i$, which follows from \cite[Fact 2.9]{patchellelayavalli2023sequential} as noted in the introduction at Corollary \ref{cor: freeness of SCorbit}.
			\item Note that the von Neumann $\cN_i$ generated by the wq-normalizer of $\cM_i$ in $\cM$ is contained in the von Neumann algebra $\tilde{\cN}_i$ generated by wq-normalizer of $\cM_i$ in $\cS^{\cW}$.  By \cite[\S 2.3, Properties 3 and 9]{PropTS1B}, we have $h(\tilde{\cN}_i: \cS^{\cW}) = h(\cM_i: \cS^{\cW}) \leq h(\cR_i^{\cV'}: \cS^{\cW}) = 0$.  Since $\tilde{\cN}_i \cap \cM_i$ is diffuse, we have that $\tilde{\cN}_i$ is contained in the Pinsker algebra $\cP_i$.  Thus, the $\cN_i$'s are freely independent as desired.  \qedhere
		\end{enumerate}
	\end{proof}
	
	\section*{Declarations}
	
	{\bf Data availability statement:}
	This manuscript has no associated data.
	
	{\bf Funding and/or Conflicts of interests/Competing interests:}
	DJ acknowledges funding from the grant ``Logic and $\mathrm{C}^*$-algebras'' from National Sciences and Engingeering Research Council of Canada; the Independent Research Fund of Denmark, grant 1026-00371B; and the Horizon Europe Marie Sk{\l}odowska-Curie Action FREEINFOGEOM.  SKE was funded by the National Science Foundation (US) grant DMS 2350049.  We thank the American Institute of Mathematics and the NSF for funding DJ's visit to California in April 2024 where this paper first was developed.
	
	The authors have no relevant financial or non-financial interests to disclose.
	
	\bibliographystyle{plain}
	\bibliography{matrix-ultraproducts}

\begin{thebibliography}{10}

\bibitem{amrutam2025strictcomparisonreducedgroup}
Tattwamasi Amrutam, David Gao, Srivatsav~Kunnawalkam Elayavalli, and Gregory
  Patchell.
\newblock Strict comparison in reduced group $c^*$-algebras, 2025.

\bibitem{anantharaman-popa}
Claire Anantharaman and Sorin Popa.
\newblock An introduction to $\mathrm{II}_1$ factors.
\newblock {\em book in progress}, 2016.

\bibitem{AGZ2009}
Greg~W. Anderson, Alice Guionnet, and Ofer Zeitouni.
\newblock {\em An Introduction to Random Matrices}.
\newblock Cambridge Studies in Advanced Mathematics. Cambridge University
  Press, 2009.

\bibitem{PTkilled}
Serban Belinschi and Mireille Capitaine.
\newblock Strong convergence of tensor products of independent {G.U.E.}
  matrices.
\newblock Preprint, arXiv:2205.07695, 2022.

\bibitem{BAG1997}
Gerard {Ben Arous} and Alice Guionnet.
\newblock Large deviations for {W}igner's law and {V}oiculescu's
  non-commutative entropy.
\newblock {\em Probab Theory Relat Fields}, 108:517--542, 1997.

\bibitem{BYBHU2008}
Ita{\"i} {Ben Yaacov}, Alexander Berenstein, C.~Ward Henson, and Alexander
  Usvyatsov.
\newblock Model theory for metric structures.
\newblock In Z.~Chatzidakis et~al., editor, {\em Model Theory with Applications
  to Algebra and Analysis, Vol. II}, volume 350 of {\em London Mathematical
  Society Lecture Notes Series}, pages 315--427. Cambridge University Press,
  2008.

\bibitem{Bl06}
B.~Blackadar.
\newblock {\em Operator algebras}, volume 122 of {\em Encyclopaedia of
  Mathematical Sciences}.
\newblock Springer-Verlag, Berlin, 2006.
\newblock Theory of $C^*$-algebras and von Neumann algebras, Operator Algebras
  and Non-commutative Geometry, III.

\bibitem{bordenave2023norm}
Charles Bordenave and Benoit Collins.
\newblock Norm of matrix-valued polynomials in random unitaries and
  permutations, 2023.
\newblock Preprint, arXiv:2304.05714.

\bibitem{BO08}
Nathanial~P. Brown and Narutaka Ozawa.
\newblock {\em {$C^*$}-algebras and finite-dimensional approximations},
  volume~88 of {\em Graduate Studies in Mathematics}.
\newblock American Mathematical Society, Providence, RI, 2008.

\bibitem{CGVvH2024strong2}
Chi-Fang Chen, Jorge Garza-Vargas, and Ramon van Handel.
\newblock A new approach to strong convergence {II}. the classical ensembles.
\newblock Preprint, arXiv:2412.00593.

\bibitem{exoticCIKE}
Ionu{\c t} Chifan, Adrian Ioana, and Srivatsav Kunnawalkam~Elayavalli.
\newblock An exotic {II}$_1$ factor without property {G}amma.
\newblock {\em Geometric and Functional Analysis}, 33:1243--1265, 2023.

\bibitem{dlSM2024}
Mikael de~la Salle and Michael Magee.
\newblock Strong asymptotic freeness of {H}aar unitaries in quasi-exponential
  dimensional representations.
\newblock preprint, arXiv:2409.03626, 2024.

\bibitem{dkep2022properly}
Changying Ding, Srivatsav Kunnawalkam~Elayavalli, and Jesse Peterson.
\newblock Properly proximal von {N}eumann algebras.
\newblock {\em Duke Math. Journal}, 172(15):2821--2894, 2023.

\bibitem{DP22}
Changying Ding and Jesse Peterson.
\newblock Biexact von {N}eumann algebras, 2023.
\newblock Preprint, arXiv:2309.10161.

\bibitem{FangGaoSmith}
J.~Fang, M.~Gao, and R.~Smith.
\newblock Weak asymptotic homomorphism property for inclusions of finite von
  {N}eumann algebras.
\newblock {\em Internat. J. Math.}, 22:991--1011, 2011.

\bibitem{FHS2013}
Ilijas Farah, Bradd Hart, and David Sherman.
\newblock Model theory of operator algebras {I}: stability.
\newblock {\em Bulletin of the London Mathematical Society}, 45(4):825--838,
  2013.

\bibitem{FHS2014a}
Ilijas Farah, Bradd Hart, and David Sherman.
\newblock Model theory of operator algebras {II}: model theory.
\newblock {\em Israel Journal of Mathematics}, 201(1):477--505, 2014.

\bibitem{FHS2014b}
Ilijas Farah, Bradd Hart, and David Sherman.
\newblock Model theory of operator algebras {III}: elementary equivalence and
  {II$_1$} factors.
\newblock {\em Bulletin of the London Mathematical Society}, 46(3):609--628,
  2014.

\bibitem{FJP2023}
Ilijas Farah, David Jekel, and Jennifer Pi.
\newblock Quantum expanders and quantifier reduction for tracial von neumann
  algebras.
\newblock {\em J. Symbolic Logic}, 2025.
\newblock To appear.

\bibitem{GalatanPopa}
Alin Galatan and Sorin Popa.
\newblock Smooth bimodules and cohomology of $\mathrm{II}_1$ factors.
\newblock {\em J. Inst. Math. Jussieu}, 16(1):155--187, 2017.

\bibitem{gao2024internal}
David Gao, Srivatsav Kunnawalkam~Elayavalli, Gregory Patchell, and Hui Tan.
\newblock Internal sequential commutation and single generation.
\newblock {\em International Mathematics Research Notices}, 2025(8):rnaf103, 04
  2025.

\bibitem{GePrime}
Liming Ge.
\newblock Applications of free entropy to finite von {N}eumann algebras. {II}.
\newblock {\em Ann. of Math. (2)}, 147(1):143--157, 1998.

\bibitem{Goldbring2023spectralgap}
Isaac Goldbring.
\newblock Spectral gap and definability.
\newblock In J.~Iovino, editor, {\em Beyond First Order Model Theory, Volume
  II}, page~36. Chapman and Hall/CRC, 2023.

\bibitem{GH2023}
Isaac Goldbring and Bradd Hart.
\newblock A survey on the model theory of tracial von neumann algebras.
\newblock In Isaac Goldbring, editor, {\em Model Theory of Operator Algebras},
  pages 133--157. DeGruyter, Berlin, Boston, 2023.

\bibitem{GoldbringPi2023}
Isaac Goldbring and Jennifer Pi.
\newblock On the first-order free group factor alternative.
\newblock {\em J. Operator Theory}, 2025.
\newblock To appear.

\bibitem{GZ2000}
Alice Guionnet and Ofer Zeitouni.
\newblock {Concentration of the Spectral Measure for Large Matrices}.
\newblock {\em Electronic Communications in Probability}, 5(none):119--136,
  2000.

\bibitem{Hart2023}
Bradd Hart.
\newblock An introduction to continuous model theory.
\newblock In Isaac Goldbring, editor, {\em Model Theory of Operator Algebras},
  pages 83--131. DeGruyter, Berlin, Boston, 2023.

\bibitem{Hayes2018}
Ben Hayes.
\newblock 1-bounded entropy and regularity problems in von {N}eumann algebras.
\newblock {\em Int. Math. Res. Not. IMRN}, 2018(1):57--137, 2018.

\bibitem{HayesPT}
Ben Hayes.
\newblock A random matrix approach to the {P}eterson-{T}hom conjecture.
\newblock {\em Indiana Univ. Math. J.}, 71(3):1243--1297, 2022.

\bibitem{hayes2025selflessreducedfreeproduct}
Ben Hayes, Srivatsav~Kunnawalkam Elayavalli, and Leonel Robert.
\newblock Selfless reduced free product $c^*$-algebras, 2025.

\bibitem{PropTS1B}
Ben Hayes, David Jekel, and Srivatsav Kunnawalkam~Elayavalli.
\newblock Property {(T)} and strong 1-boundedness for von neumann algebras.
\newblock Preprint, arXiv:2107.03278, to apprear in J. Math. Inst. Jussieu.

\bibitem{hayes2023consequences}
Ben Hayes, David Jekel, and Srivatsav {Kunnawalkam Elayavalli}.
\newblock Consequences of the random matrix solution of the {P}eterson-{T}hom
  conjecture.
\newblock {\em Analysis and PDE}, 18(7):1805--1834, 2025.

\bibitem{freePinsker}
Ben Hayes, David Jekel, Brent Nelson, and Thomas Sinclair.
\newblock A random matrix approach to absorption in free products.
\newblock {\em Int. Math. Res. Not. IMRN}, 2021(3):1919--1979, 2021.

\bibitem{houdayer2023asymptotic}
Cyril Houdayer and Adrian Ioana.
\newblock Asymptotic freeness in tracial ultraproducts.
\newblock {\em Forum of Mathematics, Sigma}, 12:e88, 2024.

\bibitem{IPP08}
Adrian Ioana, Jesse Peterson, and Sorin Popa.
\newblock Amalgamated free products of weakly rigid factors and calculation of
  their symmetry groups.
\newblock {\em Acta Math.}, 200(1):85--153, 2008.

\bibitem{JekelCoveringEntropy}
David Jekel.
\newblock Covering entropy for types in tracial $\mathrm{W}^*$-algebras.
\newblock {\em Journal of Logic and Analysis}, 15(2):1--68, 2023.

\bibitem{JekelModelEntropy}
David Jekel.
\newblock Free probability and model theory of tracial $\mathrm{W}^*$-algebras.
\newblock In Isaac Goldbring, editor, {\em Model Theory of Operator Algebras},
  pages 215--267. DeGruyter, Berlin, Boston, 2023.

\bibitem{JekelDefinableClosure}
David Jekel.
\newblock Optimal transport for types and convex analysis for definable
  predicates in tracial $\mathrm{W}^*$-algebras.
\newblock {\em J. Funct. Anal.}, 287(9):110583, 2024.

\bibitem{Jung2007}
Kenley Jung.
\newblock Strongly $1$-bounded von {N}eumann algebras.
\newblock {\em Geom. Funct. Anal.}, 17(4):1180--1200, 2007.

\bibitem{patchellelayavalli2023sequential}
Srivatsav {Kunnawalkam Elayavalli} and Gregory Patchell.
\newblock Sequential commutation in tracial von neumann algebras.
\newblock {\em Journal of Functional Analysis}, 288(4):110719, 2025.

\bibitem{Ledoux2001}
Michel Ledoux.
\newblock {\em The concentration of measure phenomenon}, volume~89 of {\em
  Mathematical Surveys and Monographs}.
\newblock American Mathematical Society, Providence, RI, 2001.

\bibitem{louder2022strongly}
Larsen Louder and Michael Magee.
\newblock Strongly convergent unitary representations of limit groups.
\newblock {\em arXiv preprint arXiv:2210.08953}, 2022.

\bibitem{Meckes2019}
Elizabeth~S. Meckes.
\newblock {\em The Random Matrix Theory of the Classical Compact Groups}.
\newblock Cambridge Tracts in Mathematics. Cambridge University Press, 2019.

\bibitem{Meckes2013}
Elizabeth~S. Meckes and Mark~W. Meckes.
\newblock Spectral powers of random matrices.
\newblock {\em Electron. Comm. Probab.}, 18(78), 2013.

\bibitem{mei}
Tao Mei and \'{E}ric Ricard.
\newblock Free {H}ilbert transforms.
\newblock {\em Duke Math. J.}, 166(11):2153--2182, 2017.

\bibitem{MuvN43}
F.~J. Murray and J.~von Neumann.
\newblock On rings of operators. {IV}.
\newblock {\em Ann. of Math. (2)}, 44:716--808, 1943.

\bibitem{OzawaSolidActa}
Narutaka Ozawa.
\newblock Solid von {N}eumann algebras.
\newblock {\em Acta Math.}, 192(1):111--117, 2004.

\bibitem{OzPo10}
Narutaka Ozawa and Sorin Popa.
\newblock On a class of {II}{$_1$} factors with at most one {C}artan
  subalgebra, {II}.
\newblock {\em Amer. J. Math.}, 132(3):841--866, 2010.

\bibitem{Parraud2024tensor}
F{\'e}lix Parraud.
\newblock The spectrum of a tensor of random and deterministic matrices.
\newblock Preprint, arXiv:2410.04481, 2024.

\bibitem{Peterson2009}
Jesse Peterson.
\newblock A $1$-cohomology characterization of property (t) in von neumann
  algebras.
\newblock {\em Pacific Journal of Math.}, 243(1):181--199, 2009.

\bibitem{PetersonDeriva}
Jesse Peterson.
\newblock {$L^2$}-rigidity in von {N}eumann algebras.
\newblock {\em Invent. Math.}, 175(2):417--433, 2009.

\bibitem{PetersonThom}
Jesse Peterson and Andreas Thom.
\newblock Group cocycles and the ring of affiliated operators.
\newblock {\em Invent. Math.}, 185(3):561--592, 2011.

\bibitem{Pisier1}
Gilles Pisier.
\newblock Quantum expanders and geometry of operator spaces.
\newblock {\em J. Eur. Math. Soc. (JEMS)}, 16(6):1183--1219, 2014.

\bibitem{PisierSubExp}
Gilles Pisier.
\newblock Random matrices and subexponential operator spaces.
\newblock {\em Israel J. Math.}, 203(1):223--273, 2014.

\bibitem{Pisier2}
Gilles Pisier.
\newblock Quantum expanders and growth of group representations.
\newblock {\em Ann. Fac. Sci. Toulouse Math. (6)}, 26(2):451--462, 2017.

\bibitem{PopaMaximalAmenable}
Sorin Popa.
\newblock Maximal injective subalgebras in factors associated with free groups.
\newblock {\em Adv. in Math.}, 50(1):27--48, 1983.

\bibitem{Popa1995independence}
Sorin Popa.
\newblock Free-independent sequences in type $\mathrm{II}_1$ factors and
  related problems.
\newblock In {\em Recent advances in operator algebras (Orl{\'e}ans, 1992)},
  volume 232 of {\em Ast{\'e}risque}, page 187–202, 1995.

\bibitem{Popasolidity}
Sorin Popa.
\newblock On {O}zawa's property for free group factors.
\newblock {\em Int. Math. Res. Not. IMRN}, 2007(11):Art. ID rnm036, 10, 2007.

\bibitem{Popa2014independence}
Sorin Popa.
\newblock Independence properties in subalgebras of ultraproduct ii1 factors.
\newblock {\em J. Funct. Anal.}, 266:5818--5846, 2014.

\bibitem{PopaVaesFree}
Sorin Popa and Stefaan Vaes.
\newblock Unique {C}artan decomposition for {$\rm II_1$} factors arising from
  arbitrary actions of free groups.
\newblock {\em Acta Math.}, 212(1):141--198, 2014.

\bibitem{robert2023selfless}
Leonel Robert.
\newblock Selfless {C*}-algebras.
\newblock {\em arXiv preprint arXiv:2309.14188}, 2023.

\bibitem{Sakai1971}
Sh{\^o}ichir{\^o} Sakai.
\newblock {\em $\mathrm{C}^*$-algebras and $\mathrm{W}^*$-algebras}, volume~60
  of {\em Ergebnisse der {M}athematik und ihrer {G}renzgebiete}.
\newblock Springer-Verlag, Berlin Heidelberg, 1971.

\bibitem{tarski}
Christopher Schafhauser and Srivatsav Kunnawalkam~Elayavalli.
\newblock Negative resolution to the ${C}^*$-algebraic {T}arski problem, 2025.

\bibitem{Szarek}
Stanis\l aw~J. Szarek.
\newblock Metric entropy of homogeneous spaces.
\newblock In {\em Quantum probability ({G}da\'{n}sk, 1997)}, volume~43 of {\em
  Banach Center Publ.}, pages 395--410. Polish Acad. Sci. Inst. Math., Warsaw,
  1998.

\bibitem{VoicAsyFree}
Dan Voiculescu.
\newblock Limit laws for random matrices and free products.
\newblock {\em Invent. Math.}, 104(1):201--220, 1991.

\bibitem{VoiculescuFreeEntropy2}
Dan-Virgil Voiculescu.
\newblock The analogues of entropy and of {F}isher's information measure in
  free probability theory {II}.
\newblock {\em Invent. Math.}, 118(3):411--440, 1994.

\bibitem{VoiculescuFreeEntropy3}
Dan-Virgil Voiculescu.
\newblock The analogues of entropy and of {F}isher's information measure in
  free probability theory. {III}. {T}he absence of {C}artan subalgebras.
\newblock {\em Geom. Funct. Anal.}, 6(1):172--199, 1996.

\bibitem{Voiculescu1998}
Dan-Virgil Voiculescu.
\newblock A strengthened asymptotic freeness result for random matrices with
  applications to free entropy.
\newblock {\em Internat. Math. Res. Not. IMRN}, 1998(1):41--63, 1998.

\bibitem{vonNeumann1942}
John von Neumann.
\newblock Approximative properties of matrices of high finite order.
\newblock {\em Portugaliae mathematica}, 3(1):1--62, 1942.

\bibitem{Zhu1993}
Kehe Zhu.
\newblock {\em An Introduction to Operator Algebras}.
\newblock Studies in Advanced Mathematics. CRC Press, Ann Arbor, 1993.

\end{thebibliography}
	
\end{document}